\newcommand{\subRn}{X}
\newcommand{\Q}{{Q_{j}^{k}}}
\newtheorem{theorem}{Theorem}[section]
\newtheorem{lemma}[theorem]{Lemma}
\newtheorem{definition}[theorem]{Definition}
\newtheorem{remark}[theorem]{Remark}
\numberwithin{equation}{section}
\newtheorem{que}{Question}
\def\rn{{\mathbb R^n}}
\def\M{{\mathscr M}}
\def\A{{\mathscr A}}
\def\P{{\mathscr P}}
\def\pp{{p(\cdot)}}
\def\cpp{{p'(\cdot)}}
\def\XXint#1#2#3{{\setbox0=\hbox{$#1{#2#3}{\int}$}
     \vcenter{\hbox{$#2#3$}}\kern-.5\wd0}}
\DeclareMathOperator{\dist}{dist}
\newcommand{\avf}{{\langle f\rangle}}
\newcommand{\pap}{{p_{1}(\cdot)}}
\newcommand{\pbp}{{p_{2}(\cdot)}}
\newcommand{\cpap}{{p_{1}^{\prime}(\cdot)}}
\newcommand{\cpbp}{{p_{2}^{\prime}(\cdot)}}
\newcommand{\Lp}{L^{p(\cdot)}}
\newcommand{\qq}{{q(\cdot)}}
\newcommand{\Lq}{L^{q(\cdot)}}
\def\d{\mathcal{D}}
\def\v{\mathtt{v}}
\def\Z{\mathbb{Z}}
\def\R{\mathbb{R}}
\def\dist{\operatorname{dist}}
\begin{document}
\title[New variable weighted conditions...]
{\bf New variable weighted conditions for fractional maximal operators over spaces of homogeneous type}

\author[X. Cen]{Xi Cen}
\address{Xi Cen\\
	School of Mathematics and Physics\\
	Southwest University of Science and Technology\\
	Mianyang 621010 \\
	People's Republic of China}\email{xicenmath@gmail.com}

\date{August 7, 2024.}

\subjclass[2020]{42B25, 42B35.}

\keywords{Multilinear fractional maximal operator, Weight, Variable exponent, Space of homogeneous type, Text condition, Sawyer.}


\begin{abstract} 
Based on the rapid development of dyadic analysis and the theory of variable weighted function spaces over the spaces of homogeneous type $(X,d,\mu)$ in recent years, we systematically consider the quantitative variable weighted characterizations for fractional maximal operators. On the one hand, a new class of variable multiple weight $A_{\vec{p}(\cdot),q(\cdot)}(X)$ is established, which enables us to prove the strong and weak type variable multiple weighted estimates for multilinear fractional maximal operators ${{{\mathscr M}_{\eta }}}$. More precisely, 
\[
{\left[ {\vec \omega } \right]_{{A_{\vec p( \cdot ),q( \cdot )}}(X)}} \lesssim {\left\| \mathscr{M}_\eta \right\|_{\prod\limits_{i = 1}^m {{L^{p_i( \cdot )}}({X,\omega _i})}  \to {L^{q( \cdot )}}(X,\omega )({WL^{q( \cdot )}}(X,\omega ))}} \le  {C_{\vec \omega ,\eta ,m,\mu ,X,\vec p( \cdot )}}.
\]

On the other hand, on account of the classical Sawyer's condition $S_{p,q}(\mathbb{R}^n)$, a new variable testing condition $C_{{p}(\cdot),q(\cdot)}(X)$ also appears in here, which allows us to obtain quantitative two-weighted estimates for fractional maximal operators ${{{M}_{\eta }}}$. To be exact,
\begin{align*}
    \|M_{\eta}\|_{L^{p(\cdot)}(X,\omega)\rightarrow L^{q(\cdot)}(X,v)} \lesssim \sum\limits_{\theta  = \frac{1}{{{p_{\rm{ - }}}}},\frac{1}{{{p_{\rm{ + }}}}}} {{{\left( {{{[\omega ,v]}_{C_{p( \cdot ),q( \cdot )}^2(X)}} + {{[\omega ]}_{C_{p( \cdot ),q( \cdot )}^1(X)}}{{[\omega ,v]}_{C_{p( \cdot ),q( \cdot )}^2(X)}}} \right)}^\theta }}.
\end{align*}
The implicit constants mentioned above are independent on the weights. 
\end{abstract}

\maketitle
\tableofcontents

\section{\bf Introduction}

\subsection{The Hypotheses}
~~

First, let us look back the concepts related to the spaces of homogeneous type, which ones may also find in \cite{Cruz2022} and \cite{Cen2024}.

\begin{definition}
For a positive function $d: X \times X \rightarrow [0, \infty)$, $X$ is a set, the quasi-metric space $(X, d)$ satisfies the following conditions:
\begin{enumerate}
    \item  When $x=y$, $d(x, y)=0$.
    \item $d(x, y)=d(y, x)$ for all $x, y \in X$.
    \item  For all $x, y, z \in X$, there is a constant $A_0 \geq 1$ such that $d(x, y) \leq A_0(d(x, z)+d(z, y))$.
\end{enumerate}
\end{definition}

\begin{definition}
 Let $\mu$ be a measure of a space $X$. For a quasi-metric ball $B(x,  r)$ and any $r>0$, if $\mu$ satisfies doubling condition, then there exists a doubling constant $C_\mu \geq 1$, such that  
$$
0<\mu(B(x, 2 r)) \leq C_\mu \mu(B(x, r))<\infty.
$$
\end{definition}

\begin{definition}
For a non-empty set $X$ with a qusi-metric $d$, a triple $(X, d, \mu)$ is said to be a space of homogeneous type if $\mu$ is a regular measure which satisfies doubling condition on the $\sigma$-algebra, generated by open sets and quasi-metric balls.
\end{definition}

Considering a measurable function \(p: E \rightarrow [1, \infty)\) on a subset \(E \subseteq X\), we define \(p_-(E) = \operatorname*{ess\,inf}\limits_{x \in E} p(x)\) and \(p_+(E) = \operatorname*{ess\,sup}\limits_{x \in E} p(x)\), with \(p_-\) and \(p_+\) specifically denoting these quantities over the entire space \(X\). Furthermore, we introduce some sets of measurable functions based on these definitions.
\begin{align*}
	&{\P}\left( E \right) = \{ p( \cdot ) :{\rm{E}} \to \left[ {1,\infty } \right) \text{ is measurable: } 1 < {p_ - }(E) \le {p_{\rm{ + }}}(E) < \infty \};\\
	&{{\P}_1}\left( {E} \right) = \{ p( \cdot ) :{\rm{E}} \to \left[ {1,\infty }\right) \text{ is measurable: } 1 \le {p_ - }(E) \le {p_{\rm{ + }}}(E) < \infty \};\\
	&{{\P}_0}\left( {E} \right) = \{ p( \cdot ) :{\rm{E}} \to \left[ {0,\infty }\right) \text{ is measurable: } 0 < {p_ - }(E) \le {p_{\rm{ + }}}(E) < \infty \}.
\end{align*}
Obviously, ${\P}\left( E \right) \subseteq {{\P}_1}\left( {E} \right) \subseteq {{\P}_0}\left( {E} \right)$. When $E=X$,  we write $\mathscr{P}(X)$ by ${\P}$ for convenience.

\begin{definition}
Let $1\leq p_-\leq p_+\leq \infty$, the variable exponent Lebesgue spaces with Luxemburg norm is defined as
\begin{equation*}
L_{}^{p( \cdot )}(X) = \{ f:{\left\| f \right\|_{L_{}^{p( \cdot )}(X)}}: = \inf \{ \lambda  > 0:{\rho _{p( \cdot )}}(\frac{f}{\lambda }) \le 1\}  < \infty \},
\end{equation*}
where ${\rho _{p( \cdot )}}(f) = \int_X {{{\left| {f(x)} \right|}^{p(x)}}dx}+\|f\|_{L^{\infty}(X_\infty)}.$ We always abbreviate $\|\cdot\|_{L^{p(\cdot)}(X)}$ to $\|\cdot\|_{p(\cdot)}$. For every ball $B \subseteq X$, if $\rho_\pp(f\chi_B) < \infty$, then $f$ is said to be locally $\pp$-integrable.
\end{definition}
 
In fact, the above spaces are Banach spaces (precisely, Banach function spaces), to which readers can refer \cite{red}.

Let $\omega$ be a weight on $X$. The variable exponent weighted Lebesgue spaces are defined by
\begin{equation*}
	L_{}^{p( \cdot )}(X,\omega) = \{ f:{\left\| f \right\|_{L_{}^{p( \cdot )}(X,\omega)}} := {\left\| {\omega f} \right\|_{L_{}^{p( \cdot )}(X)}} < \infty \}.
\end{equation*}

\begin{definition}
For any $x, y \in X$ and $d(x,y)<\frac{1}{2}$, we say $p(\cdot) \in LH_{0}$, if  	\begin{equation}\label{LH0}
		|p(x)-p(y)| \lesssim \frac{1}{\log (e+1 /d(x,y))}.
	\end{equation}
We say $p(\cdot) \in LH_{\infty}$ (respect to a point $x_0 \in X$), if there exists $p_{\infty} \in X$, for any $x \in X$,
	\begin{equation}\label{LHinfty}
		\left|p(x)-p_{\infty}\right| \lesssim \frac{1}{\log (e+d(x,x_0))} .
	\end{equation}
We denote the globally log-H\"{o}lder continuous functions by $LH=LH_{0} \cap LH_{\infty}$.
\end{definition}
According to the above definition, it seems to relate to the choice of the point $x_0$. However, through \cite{Ad2015}, we can know that such a choice is immaterial.

\begin{lemma}\label{LHxy}
    For any $y_0\in X$, if $p(\cdot) \in L H_{\infty}$ with respect to $x_0\in X$, then $p(\cdot) \in L H_{\infty}$ with respect to $y_0$.
\end{lemma}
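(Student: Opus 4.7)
The plan is to keep the very same constant $p_\infty$ that certifies $p(\cdot)\in LH_\infty$ with respect to $x_0$ and show directly that $|p(x)-p_\infty|\lesssim 1/\log(e+d(x,y_0))$ holds uniformly in $x\in X$. The only genuine content is a comparison between $\log(e+d(x,x_0))$ and $\log(e+d(x,y_0))$; once that is in hand the result drops out by substitution.

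First, I would apply the quasi-triangle inequality with constant $A_0\ge 1$ to obtain $d(x,y_0)\le A_0 d(x,x_0)+A_0 d(x_0,y_0)$. Treating $d(x_0,y_0)$ as a fixed finite constant (this is where the choice of $y_0$ enters, but only through the implicit constant), a short manipulation gives
\[
e+d(x,y_0)\le C_1\bigl(e+d(x,x_0)\bigr),\qquad C_1:=A_0\Bigl(1+\tfrac{d(x_0,y_0)}{e}\Bigr).
\]
Next, take logarithms to get $\log(e+d(x,y_0))\le \log C_1+\log(e+d(x,x_0))$. Using the elementary fact that $\log(e+d(x,x_0))\ge \log e=1$, the additive constant $\log C_1$ can be absorbed into a multiplicative one, yielding $\log(e+d(x,y_0))\le (1+\log C_1)\log(e+d(x,x_0))$, hence
\[
\frac{1}{\log(e+d(x,x_0))}\le \frac{1+\log C_1}{\log(e+d(x,y_0))}.
\]

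Finally, I would insert this bound into the hypothesis $|p(x)-p_\infty|\lesssim 1/\log(e+d(x,x_0))$ to deduce $|p(x)-p_\infty|\lesssim 1/\log(e+d(x,y_0))$, which is exactly the $LH_\infty$ condition relative to $y_0$ with the same limit $p_\infty$ (and implicit constant enlarged by the factor $1+\log C_1$). There is no real obstacle in this argument; the single delicate point worth flagging is the conversion of the additive constant inside the logarithm into a multiplicative one, which crucially relies on the lower bound $\log(e+d(x,x_0))\ge 1$ and would fail if one worked with $\log(1+d(x,x_0))$ instead.
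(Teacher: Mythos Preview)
Your argument is correct. The paper does not actually supply its own proof of this lemma; it merely cites \cite{Ad2015} and states that ``such a choice is immaterial.'' Your direct computation---comparing $e+d(x,y_0)$ and $e+d(x,x_0)$ via the quasi-triangle inequality, then absorbing the additive constant $\log C_1$ using $\log(e+d(x,x_0))\ge 1$---is exactly the standard proof one finds in the cited reference, so there is nothing further to compare.
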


If $x_0$ is not chosen definitely, we always suppose that $X$ has an arbitrary given point $x_0$.

For $\eta  \in \left[ {0,m} \right)$, every ball $B \subseteq X$, we define the multilinear fractional averaging operator by
$${\A_{\eta,B}}(\vec f)(x): = {\mu(B)^{\eta}}\left( {\prod\limits_{i = 1}^m {{{\left\langle {{f_i}} \right\rangle }_B}} } \right){\chi _B}(x),$$
The multilinear fractional maximal operator $\M_\eta$ on the spaces of homogeneous type is defined as
$$
\M_\eta (\vec{f})(x) : =\sup _{B \subseteq X}{\A_{\eta,B}}(\vec f)(x).
$$
When $X=\rn$, we take $\eta=\frac{\alpha}{n}$ and write $\M_\eta$ by $\M_\alpha$. When $m=1$, we denote $\M_\eta$ by $M_\eta$. When $\eta=0$, we deonte $\M_\eta$ by $\M$.

We next set up a new class of variable multiple weight \(A_{\vec{p}(\cdot),q(\cdot)}(X)\) and its characterizations.

\begin{definition}\label{vweight3}
    Let  $p_i(\cdot) \in \P$, $i=1,\cdots,m$, with $\frac{1}{{p( \cdot )}} = \sum\limits_{i = 1}^m {\frac{1}{{{p_i}( \cdot )}}}$, $\eta{\rm{ = }}\frac{1}{{{p}( \cdot )}}{\rm{ - }}\frac{1}{{{q}( \cdot )}} \in [0,m)$, and $\vec{\omega}:=(\omega_1,\ldots,\omega_m,\omega)$ is a multiple weight. We say $\vec{\omega} \in A_{\vec{p}(\cdot),q(\cdot)}(X)$, if it satisfies
        \begin{align*}
        {\left[ {\vec \omega } \right]_{{A_{\vec p( \cdot ),q( \cdot )}}}(X)}: = \mathop {\sup }\limits_{B \subseteq X} {\mu(B)^{\eta - m}}{\left\| {\omega {\chi _B}} \right\|_{{L^{q( \cdot )}}}}\prod\limits_{i = 1}^m {{{\left\| {\omega _i^{ - 1}{\chi _B}} \right\|}_{{L^{{p_i}^\prime ( \cdot )}}}}}  < \infty.
        \end{align*}
        In generally, we always abbreviate ${A_{\vec p( \cdot ),q( \cdot )}}(X)$ to ${A_{\vec p( \cdot ),q( \cdot )}}$.
\end{definition}

\begin{figure}[!h]
	\begin{center}
		\begin{tikzpicture}
			\node (1) at(-4,0) {$A_p(\R^n)$};
			\node (2) at(-1,3.5) {$A_{p,q}(\R^n)$};
			\node (3) at(-1,1.5) {$A_{p}(X)$};
			\node (4) at(-1,-1.5) {$A_{\vec{p}}(\R^n)$};
			\node (5) at(-1,-3.5) {$A_{p(\cdot)}(\R^n)$};
			\node (6) at(2,5) {$A_{\vec{p},q}(\R^n)$};
			\node (7) at(2,3) {$A_{{p}, q}(X)$};
			\node (8) at(2,1) {$A_{\vec{p}}(X)$};
            \node (9) at(2,-1) {$A_{\vec{p}(\cdot)}(\R^n)$};
            \node (10) at(2,-3) {$A_{{p}(\cdot)}(X)$};
            \node (11) at(2,-5) {$A_{{p}(\cdot), q(\cdot)}(\R^n)$};
            \node (12) at(8,0) {$A_{\vec{p}(\cdot), q(\cdot)}(X)$};
            \node (13) at(5,3.5) {$A_{\vec{p}(\cdot), q(\cdot)}(\R^n)$};
            \node (14) at(5,1.5) {$A_{\vec{p},q}(X)$};
            \node (15) at(5,-1.5) {$A_{\vec{p}(\cdot)}(X)$};
            \node (16) at(5,-3.5) {$A_{{p}(\cdot), q(\cdot)}(X)$};
			\draw[->] (1)--(2);
			\draw[->] (1)--(3);
			\draw[->] (1)--(4);
			\draw[->] (1)--(5);
			\draw[->] (2)--(6);
			\draw[->] (2)--(7);
            \draw[->] (2)--(11);
			\draw[->] (3)--(7);
            \draw[->] (3)--(8);
            \draw[->] (3)--(10);
			\draw[->] (4)--(6);
            \draw[->] (4)--(8);
            \draw[->] (4)--(9);
			\draw[->] (4)--(7);
			\draw[->] (5)--(9);
            \draw[->] (5)--(10);
            \draw[->] (5)--(11);
			\draw[->] (13)--(12);
            \draw[->] (14)--(12);
            \draw[->] (15)--(12);
            \draw[->] (16)--(12);
            \draw[->] (6)--(13);
            \draw[->] (6)--(14);
            \draw[->] (7)--(14);
            \draw[->] (7)--(16);
            \draw[->] (8)--(14);
            \draw[->] (8)--(15);
            \draw[->] (9)--(13);
            \draw[->] (9)--(15);
            \draw[->] (10)--(15);
            \draw[->] (10)--(16);
            \draw[->] (11)--(13);
            \draw[->] (11)--(16);
		\end{tikzpicture}
		\end{center}
\caption{The relationships between weights}\label{figure1}
	\end{figure}
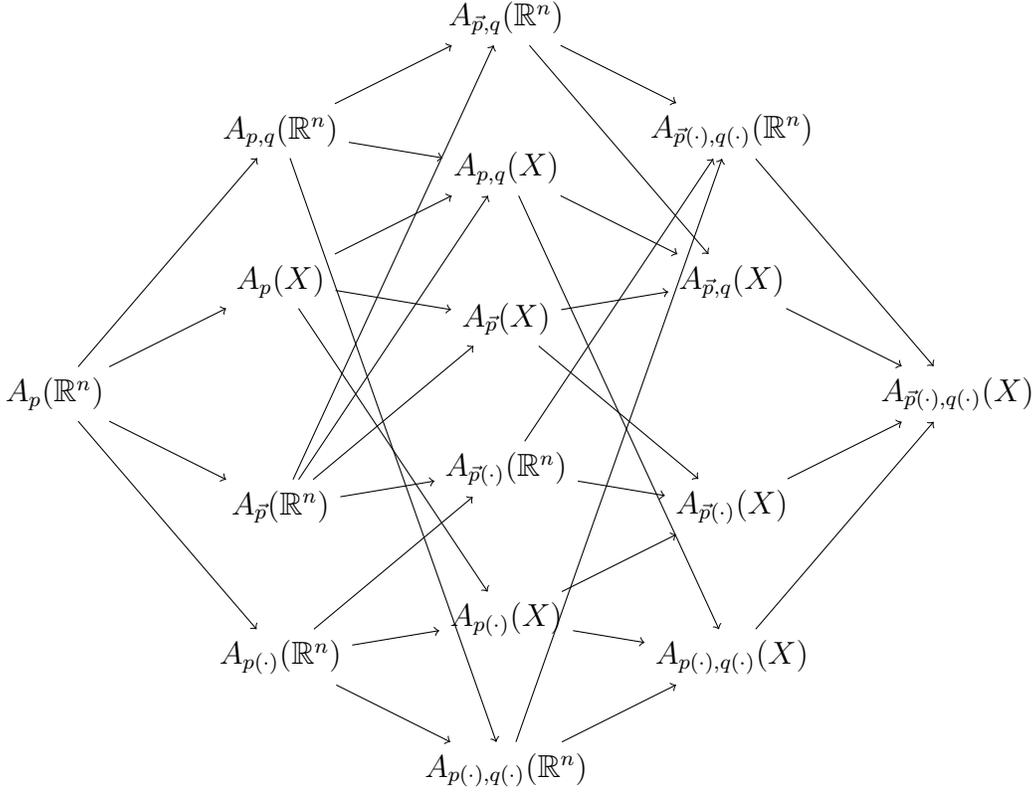
 
\begin{remark}
The interesting about the definition of $A_{\vec p(\cdot),q(\cdot)}(X)$ condition is that it is very broad. It can be translated into the well-known and studied classical weight conditions. Therefore, it is necessary to illustrate Fig.\ref{figure1}.

We first introduce the types of weights over the spaces of homogeneous type as follows.

    $(1)~$ If $\eta = 0$, then ${A_{\vec p( \cdot ),q( \cdot )}(X)} = A_{\vec p(\cdot)}(X)$.
    
    $(2)~$ If $\vec p( \cdot ) \equiv \vec p$ and $q( \cdot ) \equiv q$, then ${A_{\vec p( \cdot ),q( \cdot )}(X)} = {A_{\vec p,q}(X)}$.
    
    $(3)~$ If $\eta = 0$, $\vec p( \cdot ) \equiv \vec p$ and $q( \cdot ) \equiv q$, then ${A_{\vec p( \cdot ),q( \cdot )}(X)} = {A_{\vec p}(X)}$.
    
    $(4)~$ If $m=1$, then ${A_{\vec p( \cdot ),q( \cdot )}(X)} = A_{p(\cdot), q(\cdot)}(X)$ introduced in  \cite{Cen2024}.
    
    $(5)~$ If $m=1$, $\eta = 0$, then ${A_{\vec p( \cdot ),q( \cdot )}(X)} = {A_{p( \cdot )}(X)}$ introduced in \cite{Cruz2022}.
    
    $(6)~$ If $m=1$, $p(\cdot)\equiv p$, then $A_{p(\cdot), q(\cdot)}(X)=A_{p,q}(X)$.
    
    $(7)~$ If $m=1$, $p(\cdot)\equiv p$ and $\eta=0$, then $A_{p(\cdot), q(\cdot)}(X)=A_{p}(X)$.

\vspace{0.5cm}
When $X=\rn$, we can back to the following classical weights.

$(1')~$ If $X=\rn$, then ${A_{\vec p( \cdot ),q( \cdot )}(X)} = {A_{\vec p( \cdot ),q( \cdot )}(\rn)}$.

$(2')~$ If $X=\rn$, under the condition of $(1)$, then ${A_{\vec p( \cdot ),q( \cdot )}(X)} = A_{\vec p(\cdot)}(\rn)$ introduced in \cite{Cruz2020}.
    
    $(3')~$ If $X=\rn$, under the condition of $(2)$, then ${A_{\vec p( \cdot ),q( \cdot )}(X)} = {A_{\vec p,q}(\rn)}$ introduced in \cite{Moen2009}.
    
    $(4')~$ If $X=\rn$, under the condition of $(3)$, then ${A_{\vec p( \cdot ),q( \cdot )}(X)} = {A_{\vec p}(\rn)}$ introduced in \cite{Perez2009}.
    
    $(5')~$ If $X=\rn$, under the condition of $(4)$, then ${A_{\vec p( \cdot ),q( \cdot )}(X)} = A_{p(\cdot), q(\cdot)}(\rn)$ introduced in \cite{Ber2014}.
    
    $(6')~$ If $X=\rn$, under the condition of $(5)$, then ${A_{\vec p( \cdot ),q( \cdot )}(X)} = {A_{p( \cdot )}(\rn)}$ introduced in \cite{Cruz2011}.
    
    $(7')~$ If $X=\rn$, under the condition of $(6)$, then $A_{\vec{p}(\cdot), q(\cdot)}(X)=A_{p,q}(\rn)$ introduce in \cite{Muc1974}. 
    
    $(8')~$ If $X=\rn$, under the condition of $(7)$, then $A_{\vec{p}(\cdot), q(\cdot)}(X)=A_{p}(\rn)$ introduce in \cite{Muc1972}.
    
\end{remark}

We give two Lemmas to characterize ${A_{\vec p( \cdot ),q( \cdot )}}(X)$ as follows.

\begin{lemma}[\cite{Cen2024}, Lemma 2.14]\label{Apq_Ainfty}
    Let $p(\cdot) \in LH \cap \mathscr{P}_1$, $\frac{1}{p(\cdot)}-\frac{1}{q(\cdot)}=\eta \in [0,1)$, and $\omega \in A_{p(\cdot), q(\cdot)}(X)$. Then $u(\cdot):=$ $\omega(\cdot)^{q(\cdot)} \in A_{\infty}(X)$.
        \end{lemma}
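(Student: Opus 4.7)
The natural strategy is to show the stronger statement that $u = \omega^{q(\cdot)}$ lies in the classical Muckenhoupt class $A_s(X)$ for some fixed constant exponent $s \in (1, \infty)$ depending only on $p_\pm, q_\pm, \eta$ and the LH data; since $A_\infty(X) = \bigcup_{s \geq 1} A_s(X)$, this suffices. First I would unravel the $m=1$ case of Definition \ref{vweight3}: for every ball $B \subseteq X$,
\begin{equation*}
\mu(B)^{\eta - 1}\,\|\omega \chi_B\|_{q(\cdot)}\,\|\omega^{-1} \chi_B\|_{p'(\cdot)} \;\le\; [\omega]_{A_{p(\cdot),q(\cdot)}}.
\end{equation*}

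The next ingredient is the standard norm-modular equivalence for log-Hölder exponents on spaces of homogeneous type (available in \cite{Cruz2022} and relying crucially on Lemma \ref{LHxy}): for $r(\cdot) \in LH \cap \P_1$ and any ball $B$,
\begin{equation*}
\|f \chi_B\|_{L^{r(\cdot)}(X)} \;\approx\; \Bigl(\int_B |f(x)|^{r(x)}\,d\mu(x)\Bigr)^{1/r_B},
\end{equation*}
up to multiplicative constants depending only on $\|r\|_{LH}$, $r_\pm$ and $\mu$, where $r_B$ is a suitably chosen constant representative of $r(\cdot)$ on $B$ (for instance $r(x_B)$ in the small-radius regime, and the value $r_\infty$ in the large-radius regime, the two regimes being stitched by Lemma \ref{LHxy}). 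Applying this to $\omega$ with exponent $q(\cdot)$ and to $\omega^{-1}$ with exponent $p'(\cdot)$ converts the $A_{p(\cdot),q(\cdot)}$ bound into an essentially classical $A_{p_B, q_B}$-type inequality on each ball, with constants independent of $B$.

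The third step is to translate this ``piecewise constant'' $A_{p,q}$ estimate into an $A_s$ estimate for $u = \omega^{q(\cdot)}$. One chooses $s$ large enough, for instance so that $\frac{q(x)}{s-1} \le p'(x)$ pointwise (any $s > 1 + q_+/p'_-$ works), and then Jensen's inequality compares
\begin{equation*}
\int_B u^{-1/(s-1)}\,d\mu \;=\; \int_B \omega^{-q(x)/(s-1)}\,d\mu \quad \text{with} \quad \int_B \omega^{-p'(x)}\,d\mu.
\end{equation*}
Combined with the pointwise identity $\frac{1}{p_B} - \frac{1}{q_B} = \eta$ and the bound from the previous paragraph, this yields
\begin{equation*}
\Bigl(\frac{1}{\mu(B)}\int_B u\,d\mu\Bigr)\Bigl(\frac{1}{\mu(B)}\int_B u^{-1/(s-1)}\,d\mu\Bigr)^{s-1} \;\le\; C,
\end{equation*}
uniformly in $B$, i.e. $u \in A_s(X) \subseteq A_\infty(X)$.

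The main obstacle is the rigorous passage from variable-exponent Luxemburg norms to modular integrals with essentially constant exponents uniformly over all balls, small and large. This requires the full strength of $LH_0 \cap LH_\infty$ and the local-versus-global patching made possible by Lemma \ref{LHxy}. Once that bridge is in place, the rest is a routine Hölder/Jensen manipulation driven by the scaling relation $\frac{1}{p(\cdot)} - \frac{1}{q(\cdot)} = \eta$.
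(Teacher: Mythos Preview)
The paper does not prove this lemma; it is quoted verbatim from \cite{Cen2024}, Lemma~2.14, so there is no in-paper argument to compare against. I will therefore comment on your proposal on its own merits.

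Your second step is where the argument breaks. The ``standard norm--modular equivalence''
\[
\|f\chi_B\|_{r(\cdot)} \;\approx\; \Bigl(\int_B |f|^{r(x)}\,d\mu\Bigr)^{1/r_B}
\]
is \emph{not} valid for arbitrary $f$ under the log-H\"older hypothesis alone: the ratio of the two sides depends on the size of the modular through the oscillation $r_+(B)-r_-(B)$, and this blows up unless the size of $\|f\chi_B\|_{r(\cdot)}$ is controlled a priori. The correct statements in this paper are Lemma~\ref{fracexp} and Lemma~\ref{lemma:p-infty-cond}, both of which assume that the function is already an $A_{p(\cdot)}$ weight and, in the second case, that the norm is at least $1$. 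You cannot invoke such equivalences for $\omega$ and $\omega^{-1}$ before you know they sit in a suitable $A$-class, which is circular.

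A cleaner route, and almost certainly the one in \cite{Cen2024}, is to first reduce $A_{p(\cdot),q(\cdot)}$ to $A_{q(\cdot)}$ by a single application of the generalized H\"older inequality (Lemma~\ref{Gu6}): since $\frac{1}{q'(\cdot)}=\frac{1}{p'(\cdot)}+\eta$, one has $\|\omega^{-1}\chi_B\|_{q'(\cdot)}\lesssim \mu(B)^{\eta}\|\omega^{-1}\chi_B\|_{p'(\cdot)}$, whence $\omega\in A_{q(\cdot)}(X)$. From there the conclusion $\omega^{q(\cdot)}\in A_\infty(X)$ is the known $\eta=0$ case (it is exactly the content that $A_{q(\cdot)}$ weights satisfy $u=\omega^{q(\cdot)}\in A_\infty$, proved in \cite{Cruz2022} for spaces of homogeneous type). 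Your Jensen step and the choice of $s>1+q_+/p'_-$ are fine once this reduction is made, but they are then superfluous.
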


\begin{lemma}\label{vweight4}
	Let  $p_i(\cdot) \in \P$, $i=1,\cdots,m$, with $\frac{1}{{p( \cdot )}} = \sum\limits_{i = 1}^m {\frac{1}{{{p_i}( \cdot )}}}$, and $\eta{\rm{ = }}\frac{1}{{{p}( \cdot )}}{\rm{ - }}\frac{1}{{{q}( \cdot )}} \in [0,m)$. If $\vec{\omega} \in A_{\vec{p}(\cdot),q(\cdot)}(X)$ with $\omega :=\prod\limits_{i = 1}^m {{\omega _i}}$, we have
	\begin{equation}
	\left\{
	\begin{aligned}
	&\omega _i^{-\frac{1}{m}} \in {A_{m{p_i}^\prime ( \cdot )}}(X), i=1,\dots,m,\\
	&\omega^{\frac{1}{m}}\in {A_{mq( \cdot )}}(X).
	\end{aligned}\right.
	\end{equation}
\end{lemma}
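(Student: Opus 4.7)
The plan is to verify each claimed membership directly from the definition of the variable Muckenhoupt class, namely that $u\in A_{r(\cdot)}(X)$ means $\|u\chi_B\|_{r(\cdot)}\|u^{-1}\chi_B\|_{r'(\cdot)}\lesssim\mu(B)$ uniformly in $B\subseteq X$. Two elementary tools are used throughout: the Luxemburg-norm scaling identity $\bigl\|\,|f|^{1/m}\bigr\|_{mr(\cdot)}=\|f\|_{r(\cdot)}^{1/m}$, immediate from the modular; and the generalized H\"older inequality for variable exponent Lebesgue spaces, used in a form that permits an extra $\chi_B$ factor contributing $\|\chi_B\|_{m/\eta}=\mu(B)^{\eta/m}$ when $\eta>0$ (and being simply $1$ when $\eta=0$).

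For $\omega^{1/m}\in A_{mq(\cdot)}(X)$, scaling first gives $\|\omega^{1/m}\chi_B\|_{mq(\cdot)}=\|\omega\chi_B\|_{q(\cdot)}^{1/m}$. For the dual factor, decompose $\omega^{-1/m}=\prod_{i=1}^m \omega_i^{-1/m}$ and apply the generalized H\"older inequality together with the partition identity
\[
\frac{1}{(mq(\cdot))'}=\sum_{i=1}^m\frac{1}{mp_i'(\cdot)}+\frac{\eta}{m},
\]
which is routine to check from $\sum_i 1/p_i(\cdot)=1/p(\cdot)$ and $\eta=1/p(\cdot)-1/q(\cdot)$. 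This bounds $\|\omega^{-1/m}\chi_B\|_{(mq(\cdot))'}$ by $\bigl(\prod_i\|\omega_i^{-1}\chi_B\|_{p_i'(\cdot)}^{1/m}\bigr)\cdot\mu(B)^{\eta/m}$. Inserting $[\vec\omega]_{A_{\vec p(\cdot),q(\cdot)}(X)}$ to control $\|\omega\chi_B\|_{q(\cdot)}\prod_i\|\omega_i^{-1}\chi_B\|_{p_i'(\cdot)}$ by $[\vec\omega]_{A_{\vec p(\cdot),q(\cdot)}(X)}\,\mu(B)^{m-\eta}$, the exponents of $\mu(B)$ balance exactly: $\frac{m-\eta}{m}+\frac{\eta}{m}=1$, yielding $[\omega^{1/m}]_{A_{mq(\cdot)}(X)}\lesssim[\vec\omega]_{A_{\vec p(\cdot),q(\cdot)}(X)}^{1/m}$.

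For $\omega_i^{-1/m}\in A_{mp_i'(\cdot)}(X)$, the scheme is identical once one uses the factorization $\omega_i^{1/m}=\omega^{1/m}\prod_{j\ne i}\omega_j^{-1/m}$. Scaling converts $\|\omega_i^{-1/m}\chi_B\|_{mp_i'(\cdot)}$ to $\|\omega_i^{-1}\chi_B\|_{p_i'(\cdot)}^{1/m}$, while applying generalized H\"older to $\omega_i^{1/m}$ with the analogous identity
\[
\frac{1}{(mp_i'(\cdot))'}=\frac{1}{mq(\cdot)}+\sum_{j\ne i}\frac{1}{mp_j'(\cdot)}+\frac{\eta}{m}
\]
produces the upper bound $\|\omega\chi_B\|_{q(\cdot)}^{1/m}\prod_{j\ne i}\|\omega_j^{-1}\chi_B\|_{p_j'(\cdot)}^{1/m}\,\mu(B)^{\eta/m}$ for $\|\omega_i^{1/m}\chi_B\|_{(mp_i'(\cdot))'}$. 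The product of the two factors of the $A_{mp_i'(\cdot)}$-characteristic therefore again collapses to $\mu(B)$ times $[\vec\omega]_{A_{\vec p(\cdot),q(\cdot)}(X)}^{1/m}$.

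The argument is essentially algebraic and needs no structural hypothesis beyond the definitions; in particular, no log-H\"older continuity is invoked. The main obstacle, such as it is, is strictly bookkeeping: verifying the two partition identities above, isolating the limiting case $\eta=0$ so that no exponent $m/0$ appears, and confirming that $mp_i'(\cdot),\,mq(\cdot)$, and (when $\eta>0$) $m/\eta$ all lie in $\mathscr{P}(X)$ so that the right-hand Muckenhoupt classes are well-defined; these last points follow automatically from $p_i(\cdot)\in\mathscr{P}$ and $\eta\in[0,m)$.
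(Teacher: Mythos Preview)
Your proposal is correct and follows essentially the same approach as the paper: both arguments prove each inclusion by writing the dual-norm factor as a product (using $\omega_i^{1/m}=\omega^{1/m}\prod_{j\ne i}\omega_j^{-1/m}$ and $\omega^{-1/m}=\prod_i\omega_i^{-1/m}$), then apply the generalized H\"older inequality together with the scaling identity $\||f|^{1/m}\|_{mr(\cdot)}=\|f\|_{r(\cdot)}^{1/m}$ and the $A_{\vec p(\cdot),q(\cdot)}$ condition. The only cosmetic difference is that the paper introduces auxiliary exponents ($p(\cdot)$ in the first part and $q_i(\cdot)$ with $1/q_i(\cdot)=1/p_i(\cdot)-\eta/m$ in the second) and then applies one further H\"older step to pass from $p(\cdot)$ to $q(\cdot)$ (respectively $q_i'(\cdot)$ to $p_i'(\cdot)$), whereas you fold the $\eta/m$ contribution directly into a single partition identity; the two are equivalent.
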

\begin{proof}[Proof:]
	Note that $$\frac{1}{{mp( \cdot )}} + \sum\limits_{i \ne j} {\frac{1}{{m{p_j}^\prime ( \cdot )}}}  = \frac{1}{{(m{p_j}^\prime )'( \cdot )}}.$$
	By H\"older's inequality, we obtain
	\begin{align*}
	{\left\| {\omega _j^{ - \frac{1}{m}}{\chi _B}} \right\|_{{L^{m{p_j}^\prime ( \cdot )}}}}{\left\| {\omega _j^{\frac{1}{m}}{\chi _B}} \right\|_{{L^{(m{p_j}^\prime )'( \cdot )}}}}
	=& {\left\| {\omega _j^{ - \frac{1}{m}}{\chi _B}} \right\|_{{L^{m{p_j}^\prime ( \cdot )}}}}{\left\| {{\omega ^{\frac{1}{m}}}\prod\limits_{i \ne j} {\omega _j^{ - \frac{1}{m}}} {\chi _B}} \right\|_{{L^{(m{p_j}^\prime )'( \cdot )}}}}\\
	\lesssim& {\left\| {{\omega ^{\frac{1}{m}}}{\chi _B}} \right\|_{{L^{mp( \cdot )}}}}\prod\limits_{i = 1}^m {{{\left\| {\omega _i^{ - \frac{1}{m}}{\chi _B}} \right\|}_{{L^{m{p_i}^\prime ( \cdot )}}}}}\\
	\lesssim& \mu(B){\left( {{{\mu(B)}^{\eta - m}}\left\| {\omega {\chi _B}} \right\|_{{L^{q( \cdot )}}}^{}\prod\limits_{i = 1}^m {\left\| {\omega _i^{ - 1}{\chi _B}} \right\|_{{L^{{p_i}^\prime ( \cdot )}}}^{}} } \right)^{\frac{1}{m}}}.
	\end{align*}
	Thus, we have
\begin{align}
\left[ {\omega _j^{ - \frac{1}{m}}} \right]_{{A_{m{p_j}^\prime ( \cdot )}}}^m \lesssim {\left[ {\vec \omega } \right]_{{A_{\vec p( \cdot ),q( \cdot )}}}}.
\end{align}
	
	We also have the following estimate.
	\begin{align*}
	{\frac{1}{\mu(B)}}{\left\| {{\omega ^{\frac{1}{m}}}{\chi _B}} \right\|_{{L^{mq( \cdot )}}}}{\left\| {{\omega ^{ - \frac{1}{m}}}{\chi _B}} \right\|_{{L^{(mq)'( \cdot )}}}} \lesssim& {\frac{1}{\mu(B)}}{\left\| {{\omega ^{\frac{1}{m}}}{\chi _B}} \right\|_{{L^{mq( \cdot )}}}}\prod\limits_{i = 1}^m {{{\left\| {{\omega_i ^{ - \frac{1}{m}}}{\chi _B}} \right\|}_{{L^{m{q_i}^\prime ( \cdot )}}}}}\\
	\le& {\frac{1}{\mu(B)}}{\left( {{{\left\| {\omega {\chi _B}} \right\|}_{{L^{q( \cdot )}}}}\prod\limits_{i = 1}^m {{{\left\| {{\omega_i ^{ - 1}}{\chi _B}} \right\|}_{{L^{{q_i}^\prime ( \cdot )}}}}} } \right)^{\frac{1}{m}}}\\
	\lesssim& {\left( {{{\mu(B)}^{\eta - m}}{{\left\| {\omega {\chi _B}} \right\|}_{{L^{q( \cdot )}}}}\prod\limits_{i = 1}^m {{{\left\| {{\omega_i ^{ - 1}}{\chi _B}} \right\|}_{{L^{{p_i}^\prime ( \cdot )}}}}} } \right)^{\frac{1}{m}}},\\
	\end{align*}
	where $\frac{1}{{{q_i}( \cdot )}}: = \frac{1}{{{p_i}( \cdot )}} - \frac{\eta}{{m}}$, $i=1,\cdots,m$.
	
	Therefore, we also get
	$$\left[ {{\omega ^{\frac{1}{m}}}} \right]_{{A_{mq( \cdot )}}}^m \lesssim {\left[ {\vec \omega } \right]_{{A_{\vec p( \cdot ),q( \cdot )}}}}.\qedhere$$
\end{proof}

The following lemma follows immediately from Lemma \ref{vweight4} and Lemma \ref{Apq_Ainfty}.

\begin{lemma}\label{Ap_Ainfty_}
    Let \( p_i(\cdot) \in \P \) for \( i = 1, \ldots, m \). Assume that $\frac{1}{{p( \cdot )}} = \sum\limits_{i = 1}^m {\frac{1}{{{p_i}( \cdot )}}}$ and $\eta = \frac{1}{p(\cdot)} - \frac{1}{q(\cdot)} \in [0,m)$. If $\vec{\omega} \in A_{\vec{p}(\cdot), q(\cdot)}(X)$, then both $u(\cdot):= \omega(\cdot)^{q(\cdot)}$ and $\sigma_j(\cdot) :=\omega_j(\cdot)^{-p_j'(\cdot)}$, for any $j=1,\ldots,m$, belong to $A_{\infty}(X)$.
\end{lemma}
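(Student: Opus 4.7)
The plan is to combine Lemma~\ref{vweight4} with Lemma~\ref{Apq_Ainfty} in a two-step reduction, which is exactly the structure signaled by the preamble to the statement. Since the machinery is already in place, the work is almost entirely bookkeeping: factor the multiple-weight hypothesis into single-weight Muckenhoupt statements, then promote each of them to $A_\infty$.

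First, I would feed $\vec\omega \in A_{\vec p(\cdot),q(\cdot)}(X)$ into Lemma~\ref{vweight4}. This directly produces, for each $j \in \{1,\ldots,m\}$, the single-weight membership
\[
\omega^{1/m} \in A_{mq(\cdot)}(X), \qquad \omega_j^{-1/m} \in A_{mp_j'(\cdot)}(X),
\]
with quantitative control of the constants by $[\vec\omega]_{A_{\vec p(\cdot),q(\cdot)}}$. Next, I would identify the single-weight Muckenhoupt class $A_{r(\cdot)}(X)$ with the $m=1$, $\eta = 0$ specialization $A_{r(\cdot),r(\cdot)}(X)$ of Definition~\ref{vweight3} (item~(5) of the remark after the definition makes this explicit). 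This lets me apply Lemma~\ref{Apq_Ainfty} with the choice $p(\cdot) = q(\cdot) = mq(\cdot)$ to $\omega^{1/m}$, yielding
\[
u(\cdot) = \omega(\cdot)^{q(\cdot)} = \bigl(\omega^{1/m}\bigr)^{mq(\cdot)} \in A_\infty(X),
\]
and analogously with $p(\cdot) = q(\cdot) = mp_j'(\cdot)$ applied to $\omega_j^{-1/m}$, producing
\[
\sigma_j(\cdot) = \omega_j(\cdot)^{-p_j'(\cdot)} = \bigl(\omega_j^{-1/m}\bigr)^{mp_j'(\cdot)} \in A_\infty(X).
\]

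The only point that requires mild verification — and I expect this to be the one subtlety rather than a genuine obstacle — is that the rescaled exponents $mq(\cdot)$ and $mp_j'(\cdot)$ actually fall within the admissible class $LH \cap \mathscr{P}_1$ required by Lemma~\ref{Apq_Ainfty}. Since $p_i(\cdot) \in \mathscr{P}$ forces $1 < (p_i)_- \le (p_i)_+ < \infty$, the conjugate $p_j'(\cdot)$ is bounded and bounded away from $1$, and multiplication by the positive integer $m$ keeps the resulting exponent in $(1,\infty)$; the analogous bound for $mq(\cdot)$ follows from $\tfrac{1}{q(\cdot)} = \tfrac{1}{p(\cdot)} - \eta$ together with $\eta \in [0,m)$. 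The $LH$ regularity of each $p_i(\cdot)$ (implicit in the variable-exponent setting of the paper) transfers to the reciprocal and to scalar multiples by the standard stability properties of log-H\"older continuity. With these routine compatibility checks in place, the two applications of Lemma~\ref{Apq_Ainfty} close the argument, so the proof should consist of little more than these two displayed identifications.
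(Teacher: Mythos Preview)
Your proposal is correct and follows exactly the approach the paper indicates: the paper states that the lemma ``follows immediately from Lemma~\ref{vweight4} and Lemma~\ref{Apq_Ainfty},'' and your two-step reduction (first extract $\omega^{1/m}\in A_{mq(\cdot)}$ and $\omega_j^{-1/m}\in A_{mp_j'(\cdot)}$ via Lemma~\ref{vweight4}, then raise to the $mq(\cdot)$ and $mp_j'(\cdot)$ powers via Lemma~\ref{Apq_Ainfty} with $\eta=0$) is precisely this. Your remark that the $LH$ hypothesis needed for Lemma~\ref{Apq_Ainfty} is implicit in the paper's setting is also accurate, since the lemma is only invoked in the main theorems where $p_i(\cdot)\in LH\cap\mathscr{P}$ is assumed.
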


\subsection{Motivations and Main Results}\label{sec2}

\subsubsection{\bf New variable multiple weight conditions}\label{se}
~~

We next present some historical sources of our motivations.

Since 1972 and 1974, Muckenhoupt et al. \cite{Muc1972,Muc1974} studied the characterizations of $A_{p}(\rn)$ and $A_{p,q}(\rn)$ by maximal operators $M$ and fractional maximal operators $M_{\alpha}$ respectively. Many people began to focus on the relationship between the characterization of weights and maximal operators.

In 2009, Lerner, Ombrosi, P\'{e}rez, Torres, and Trujillo-Gonz\'{a}lez firstly established the following characterizations for multilinear maximal operators ${\mathscr M}$.

\hspace{-20pt}{\bf Theorem $A_1$} (\cite{Perez2009}).\label{FRLB1} {\it\
Let $1 < p_1, \ldots, p_m < \infty$,
$\frac{1}{p} = \frac{1}{p_1} + \cdots + \frac{1}{p_m}$, and $\vec \omega$ is a multiple weight with $\omega :=\prod\limits_{i = 1}^m {{\omega _i}}$. Then ${{{\mathscr M}}}$ is bounded from ${L^{{p_1}}}({\omega _1}^{p_1}) \times  \cdots  \times {L^{{p_m}}}({\omega _m}^{p_m})$ to ${L^{p}}({\omega}^{p})$ if and only if $\vec \omega  \in {A_{\vec p}}(\rn)$.
}

\hspace{-20pt}{\bf Theorem $A_2$} (\cite{Perez2009}).\label{FRLB1} {\it\
Let $1 \le p_1, \ldots, p_m < \infty$,
$\frac{1}{p} = \frac{1}{p_1} + \cdots + \frac{1}{p_m}$, and $\vec \omega$ is a multiple weight with $\omega :=\prod\limits_{i = 1}^m {{\omega _i}}$. Then ${{{\mathscr M}}}$ is bounded from ${L^{{p_1}}}({\omega _1}^{p_1}) \times  \cdots  \times {L^{{p_m}}}({\omega _m}^{p_m})$ to ${WL^{p}}({\omega}^{p})$ if and only if $\vec \omega  \in {A_{\vec p}}(\rn)$.
}

Moen in the same year given the following result for multilinear fractional maximal operators ${\mathscr M}_\alpha$.

\hspace{-20pt}{\bf Theorem $A_3$} (\cite{Moen2009}).\label{FRLB1} {\it\
Let $1 < p_1, \ldots, p_m < \infty$,
$\frac{1}{p} = \frac{1}{p_1} + \cdots + \frac{1}{p_m}$, $ \frac{1}{p} -\frac{1}{q} = \frac{\alpha}{n} \in (0,m)$, and $\vec \omega$ is a multiple weight with $\omega :=\prod\limits_{i = 1}^m {{\omega _i}}$. Then ${{{\mathscr M}_{\alpha }}}$ is bounded from ${L^{{p_1}}}({\omega _1}^{p_1}) \times  \cdots  \times {L^{{p_m}}}({\omega _m}^{p_m})$ to ${L^{q}}({\omega}^{q})$ if and only if $\vec \omega  \in {A_{\vec p,q}}(\rn)$.
}

In 2012, Cruz-Uribe, Fiorenza, and Neugebauer \cite{Cruz2012} firstly studied the characterization of $A_{p(\cdot)}(\rn)$ by maximal operators $M$.

\, \hspace{-20pt}{\bf Theorem $A_4$}(\cite{Cruz2012}). {\it\
	Let $p(\cdot) \in  L H \cap \P$ and $\omega$ is a weight. Then $M$ is bounded on $L^{p(\cdot)}(\omega)$ if and only if $\omega \in A_{p(\cdot)}(\rn)$.
}

\, \hspace{-20pt}{\bf Theorem $A_5$}(\cite{Cruz2012}). {\it\
	Let $p(\cdot) \in  L H \cap \mathscr{P}_1$ and $\omega$ is a weight. Then
	$M$ is bounded from $L^{p(\cdot)}(\omega)$ to $WL^{p(\cdot)}(\omega)$
	if and only if $\omega \in A_{p(\cdot)}(\rn)$.
}

In 2014, Bernardis, Dalmasso, and Pradolini \cite{Ber2014} proved the characterizations for $A_{p(\cdot),q(\cdot)}(\rn)$ by fractional maximal operators $M_{\alpha}$ as follows.

\, \hspace{-20pt}{\bf Theorem $A_6$}(\cite{Ber2014}).{\it\
Let $p(\cdot), q(\cdot) \in  L H \cap \mathscr{P}$, $ \frac{1}{p(\cdot)}-\frac{1}{q(\cdot)}=\frac{\alpha }{n} \in[0,1)$, and $\omega$ is a weight. Then $M_{\alpha}$ is bounded from $L^{p(\cdot)}(\omega)$ to $L^{q(\cdot)}(\omega)$ if and only if $\omega \in A_{p(\cdot), q(\cdot)}(\rn)$.
}

In 2018, Cruz-Uribe and Shukla \cite{Cruz2018} obtained the following results, which solve the problem of boundedness of fractional maximal operators on variable Lebesgue spaces over the spaces of homogeneous type.

\, \hspace{-20pt}{\bf Theorem $A_7$}(\cite{Cruz2018}).{\it\
Let $p(\cdot), q(\cdot) \in   L H \cap \mathscr{P}$ and $ \frac{1}{p(\cdot)}-\frac{1}{q(\cdot)}=\eta \in[0,1)$ Then $M_\eta$ is bounded from $L^{p(\cdot)}(X)$ to $L^{q(\cdot)}(X)$.

Additionally, if \(\mu(X) < +\infty\), the requirement \( p(\cdot) \in L H\) can be substituted with \(p(\cdot) \in L H_0\).
}

\, \hspace{-20pt}{\bf Theorem $A_8$}(\cite{Cruz2018}). {\it\
Let $p(\cdot), q(\cdot) \in  L H \cap \mathscr{P}_1$ and $ \frac{1}{p(\cdot)}-\frac{1}{q(\cdot)}=\eta \in[0,1)$ Then $M_\eta$ is bounded from $L^{p(\cdot)}(X)$ to $WL^{q(\cdot)}(X)$.

Moreover, if \(\mu(X) < +\infty\), the condition \( p(\cdot) \in L H\) may be substituted with \(p(\cdot) \in L H_0\).
}

In 2020, Cruz-Uribe and Guzmán \cite{Cruz2020} established the characterization for $\M$ by ${A_{\vec p( \cdot )}}(\rn)$.

\, \hspace{-20pt}{\bf Theorem $A_9$} (\cite{Cruz2020}).\label{m-M} 
{\it\
Let $p_i( \cdot ) \in LH\cap \P$, $i = 1, \cdots ,m$, with $\frac{1}{{p( \cdot )}} = \sum\limits_{i = 1}^m {\frac{1}{{{p_i}( \cdot )}}}$, and $\vec{\omega}=(\omega_1,\ldots,\omega_m)$ is a multiple weight with $\omega:= \prod\limits_{i = 1}^m {{\omega _i}}$. Then $\M$ is bounded from ${L^{{p_1}( \cdot )}}({\omega _1}) \times  \cdots  \times {L^{{p_m}( \cdot )}}({\omega _m})$ to ${L^{p( \cdot )}}(\omega)$ if and only if $\vec \omega  \in {A_{\vec p( \cdot )}}(\rn)$.
}

In 2022, Cruz-Uribe and Cummings \cite{Cruz2022} demonstrated the following characterizations for $A_{p(\cdot)}(X)$ by maximal operators $M$.

\, \hspace{-20pt}{\bf Theorem $A_{10}$}(\cite{Cruz2022}).\label{VApp1} {\it\
Let $p(\cdot) \in L H \cap \P$ and $\omega$ is a weight. Then $M$ is bounded on $L^{p(\cdot)}(X, \omega)$ if and only if $\omega \in A_{p(\cdot)}(X)$.
}

\, \hspace{-20pt}{\bf Theorem $A_{11}$}(\cite{Cruz2022}).\label{VApp2} {\it\ Let $p(\cdot) \in   L H \cap \mathscr{P}$ and $\omega$ is a weight. Then
$M$ is bounded from $L^{p(\cdot)}(X, \omega)$ to $WL^{p(\cdot)}(X, \omega)$
if and only if $\omega \in A_{p(\cdot)}(X)$.
}

In April 2024, we \cite{Cen2024} proved the following results which actually promote our this study.

\, \hspace{-20pt}{\bf Theorem $A_{12}$}(\cite{Cen2024}).\label{cen1} {\it\ 
Let $p(\cdot) \in L H \cap \P$, $ \frac{1}{p(\cdot)}-\frac{1}{q(\cdot)}=\eta \in[0,1)$, and $\omega$ is a weight. Then $M_\eta$ is bounded from $L^{p(\cdot)}(X, \omega)$ to $L^{q(\cdot)}(X, \omega)$ if and only if $\omega \in A_{p(\cdot), q(\cdot)}(X)$.}

\, \hspace{-20pt}{\bf Theorem $A_{13}$}(\cite{Cen2024}).\label{cen2} {\it\ 
Let $p(\cdot) \in L H \cap \mathscr{P}$, $ \frac{1}{p(\cdot)}-\frac{1}{q(\cdot)}=\eta \in[0,1)$, and $\omega$ is a weight. Then
$M_\eta$ is bounded from $L^{p(\cdot)}(X, \omega)$ to $WL^{q(\cdot)}(X, \omega)$
if and only if $\omega \in A_{p(\cdot), q(\cdot)}(X)$.}

Naturally, we ask such a question.
\begin{que}
Whether the new condition $A_{\vec{p}(\cdot),q(\cdot)}(X)$ can be characterized by multilinear fractional maximal operator $\M_\eta$?
\end{que}

The answer to this question is affirmative. Let us first consider quantitative variable weighted estimates for multilinear fractional averaging operators ${{\A_{\eta,B}}}$ under the new condition $A_{\vec{p}(\cdot),q(\cdot)}(X)$.

\begin{theorem}\label{Cha.Apq}
Let $p_i(\cdot)\in \P, i=1,\ldots,m$ with $\frac{1}{{p( \cdot )}} = \sum\limits_{i = 1}^m {\frac{1}{{{p_i}( \cdot )}}}$, $\frac{1}{{{p}( \cdot )}}-\frac{1}{{{q}( \cdot )}}= \eta\in [0,m)$, and $\vec{\omega}$ is a multiple weight, then  ${{\A_{\eta,B}}}$ is bounded from ${L^{{p_1}( \cdot )}}(X,{\omega _1}) \times  \cdots  \times {L^{{p_m}( \cdot )}}({X,\omega _m})$ to ${L^{q( \cdot )}}(X,\omega)$ uniformly for all balls $B$ if and only if $\vec\omega  \in {A_{\vec p( \cdot ),q( \cdot )}(X)}$.
	Furthermore, we have that
	\begin{align*}
	{\left[ {\vec \omega } \right]_{{A_{\vec p( \cdot ),q( \cdot )}}(X)}}{ \lesssim _{\vec p(\cdot)}} \mathop {\left\| {{\A_{\eta,B}}} \right\|_{\prod\limits_{i = 1}^m {{L^{p_i( \cdot )}}({X,\omega _i})}  \to {L^{q( \cdot )}}(X,\omega )}} \le {\left[ {\vec \omega } \right]_{{A_{\vec p( \cdot ),q( \cdot )}}(X)}},
	\end{align*}
where the implicit constant only depends on the $\vec{p}(\cdot)$.
\end{theorem}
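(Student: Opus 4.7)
The strategy is to leverage the rank-one structure of $\A_{\eta,B}$: on each fixed ball $B$ the operator outputs a scalar multiple of $\chi_B$, so the operator norm splits into a product, and both directions reduce to H\"older's inequality in variable Lebesgue spaces over $(X,d,\mu)$ together with its converse via the associate norm. There is no multilinear maximal machinery yet; we are just handling a single averaging operator.

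For the right-hand (sufficiency) inequality, I would unwind the definition to obtain
\begin{align*}
\|\A_{\eta,B}(\vec f)\|_{L^{q(\cdot)}(X,\omega)}
= \mu(B)^{\eta-m}\Big(\prod_{i=1}^{m}\int_B |f_i|\,d\mu\Big)\|\omega\chi_B\|_{L^{q(\cdot)}(X)}.
\end{align*}
Writing $\int_B|f_i|\,d\mu = \int_X(|f_i|\omega_i)(\omega_i^{-1}\chi_B)\,d\mu$ and applying H\"older's inequality in the pairing $(L^{p_i(\cdot)}(X),L^{p_i'(\cdot)}(X))$ bounds each factor by $\|f_i\|_{L^{p_i(\cdot)}(X,\omega_i)}\,\|\omega_i^{-1}\chi_B\|_{L^{p_i'(\cdot)}}$. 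Comparing the resulting product with Definition \ref{vweight3} gives $\|\A_{\eta,B}\|_{\prod L^{p_i(\cdot)}(X,\omega_i)\to L^{q(\cdot)}(X,\omega)}\le [\vec\omega]_{A_{\vec p(\cdot),q(\cdot)}(X)}$.

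For the left-hand (necessity) inequality, fix an arbitrary ball $B$ and invoke the dual representation of the $L^{p_i'(\cdot)}$-norm (available because the variable Lebesgue space is a Banach function space): there exist nonnegative $g_i$ supported in $B$ with $\|g_i\|_{L^{p_i(\cdot)}}\le 1$ and
\[
\int g_i\,\omega_i^{-1}\,d\mu \;\gtrsim_{p_i(\cdot)}\; \|\omega_i^{-1}\chi_B\|_{L^{p_i'(\cdot)}}.
\]
Testing on $f_i:=g_i\,\omega_i^{-1}\chi_B$ gives $\|f_i\|_{L^{p_i(\cdot)}(X,\omega_i)}=\|g_i\|_{L^{p_i(\cdot)}}\le 1$ and $\langle f_i\rangle_B \gtrsim_{p_i(\cdot)} \mu(B)^{-1}\|\omega_i^{-1}\chi_B\|_{L^{p_i'(\cdot)}}$. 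Substituting these into $\A_{\eta,B}(\vec f)=\mu(B)^{\eta}\big(\prod_i \langle f_i\rangle_B\big)\chi_B$ and taking the $L^{q(\cdot)}(X,\omega)$-norm reproduces exactly the defining functional of $[\vec\omega]_{A_{\vec p(\cdot),q(\cdot)}(X)}$, so that taking the supremum over $B$ delivers the left-hand inequality.

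The single technical point to monitor is that the variable-exponent H\"older inequality and its dual formulation be applied with constants depending only on $\vec p(\cdot)$; this is precisely what produces the $\lesssim_{\vec p(\cdot)}$ in the lower bound while leaving the upper bound sharp. Beyond that, the argument is purely structural: no covering lemma, sparse domination, or property of the weight beyond Definition \ref{vweight3} is used. This is what makes the averaging-operator estimate a clean basepoint for the subsequent strong and weak type characterizations of the multilinear fractional maximal operator $\M_\eta$.
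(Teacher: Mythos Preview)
Your proposal is correct and matches the paper's approach essentially line for line. The sufficiency is exactly the computation the paper records in the Remark following Theorem~\ref{W.Cha.Apq}, and your necessity argument via the associate-norm dual representation is precisely the paper's ``concise proof'' at the end of Section~\ref{proof1}, which invokes Lemma~\ref{lemma:dual} to produce test functions $h_i$ with $\|\omega_i h_i\|_{p_i(\cdot)}\le 1$ and $\int_B h_i\,d\mu \gtrsim_{p_i(\cdot)} \|\omega_i^{-1}\chi_B\|_{p_i'(\cdot)}$.
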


\begin{theorem}\label{W.Cha.Apq}
Let $p_i(\cdot)\in \P, i=1,\ldots,m$ with $\frac{1}{{p( \cdot )}} = \sum\limits_{i = 1}^m {\frac{1}{{{p_i}( \cdot )}}}$, $\frac{1}{{{p}( \cdot )}}-\frac{1}{{{q}( \cdot )}}= \eta\in [0,m)$, and $\vec{\omega}$ is a multiple weight, then  ${{\A_{\eta,B}}}$ is bounded from ${L^{{p_1}( \cdot )}}(X,{\omega _1}) \times  \cdots  \times {L^{{p_m}( \cdot )}}({X,\omega _m})$ to ${WL^{q( \cdot )}}(X,\omega)$ uniformly for all balls $B$ if and only if $\vec\omega  \in {A_{\vec p( \cdot ),q( \cdot )}(X)}$.
	Furthermore, we have that
	\begin{align*}
	{\left[ {\vec \omega } \right]_{{A_{\vec p( \cdot ),q( \cdot )}}(X)}}{ \lesssim} \mathop {\left\| {{\A_{\eta,B}}} \right\|_{\prod\limits_{i = 1}^m {{L^{p_i( \cdot )}}({X,\omega _i})}  \to {WL^{q( \cdot )}}(X,\omega )}} \le {\left[ {\vec \omega } \right]_{{A_{\vec p( \cdot ),q( \cdot )}}(X)}},
	\end{align*}
where the implicit constant is independent on the $\vec{\omega}$.
\end{theorem}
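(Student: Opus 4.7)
The proof follows the template of Theorem \ref{Cha.Apq} almost verbatim, the essential point being that the output $\A_{\eta,B}(\vec f)$ is a \emph{single-level} function (a nonnegative constant times $\chi_B$). For such functions the weak and strong weighted Luxemburg norms coincide, so no new ideas are required beyond those already used in the strong-type case.

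For the upper bound $\|\A_{\eta,B}\|_{\prod_{i=1}^m L^{p_i(\cdot)}(X,\omega_i)\to WL^{q(\cdot)}(X,\omega)} \le [\vec\omega]_{A_{\vec p(\cdot),q(\cdot)}(X)}$, I would combine the elementary pointwise bound $\lambda\chi_{\{|g|>\lambda\}}\le |g|$ with the lattice property of $L^{q(\cdot)}(X,\omega)$ to deduce $\|g\|_{WL^{q(\cdot)}(X,\omega)} \le \|g\|_{L^{q(\cdot)}(X,\omega)}$ for every measurable $g$, and then invoke Theorem \ref{Cha.Apq} directly.

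For the lower bound, fix a ball $B$ and set $c := \mu(B)^\eta \prod_{i=1}^m \langle f_i\rangle_B$; then $\A_{\eta,B}(\vec f) = c\chi_B$, the super-level set $\{\A_{\eta,B}(\vec f)>t\}$ equals $B$ for $0<t<c$ and is empty otherwise, and consequently
\begin{align*}
\|\A_{\eta,B}(\vec f)\|_{WL^{q(\cdot)}(X,\omega)} = \sup_{0<t<c} t\,\|\chi_B\|_{L^{q(\cdot)}(X,\omega)} = c\,\|\omega\chi_B\|_{L^{q(\cdot)}(X)},
\end{align*}
which equals the strong norm. Using the norm-conjugate formula in $L^{p_i(\cdot)}(X)$, I would then choose nonnegative $g_i$ satisfying $\|g_i\|_{L^{p_i(\cdot)}(X)}\le 1$ and $\int_B g_i\omega_i^{-1}\,d\mu \gtrsim \|\omega_i^{-1}\chi_B\|_{L^{p_i'(\cdot)}(X)}$, and set $f_i := g_i\omega_i^{-1}\chi_B$. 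This gives $\|f_i\|_{L^{p_i(\cdot)}(X,\omega_i)}\le 1$ and $\langle f_i\rangle_B \gtrsim \mu(B)^{-1}\|\omega_i^{-1}\chi_B\|_{L^{p_i'(\cdot)}(X)}$; substituting into the assumed weak-type inequality and rearranging produces
\begin{align*}
\mu(B)^{\eta-m}\,\|\omega\chi_B\|_{L^{q(\cdot)}(X)}\prod_{i=1}^m \|\omega_i^{-1}\chi_B\|_{L^{p_i'(\cdot)}(X)} \lesssim \|\A_{\eta,B}\|_{\prod_{i=1}^m L^{p_i(\cdot)}(X,\omega_i)\to WL^{q(\cdot)}(X,\omega)},
\end{align*}
and taking the supremum over $B$ completes the argument.

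The only real obstacle is bookkeeping: one must verify that the implicit constants from the norm-conjugate formula depend only on $\vec p(\cdot)$ (not on $\vec\omega$ or $B$) and confirm that the weak-strong coincidence for single-level functions is invoked with the correct convention $\|\chi_E\|_{L^{q(\cdot)}(X,\omega)} = \|\omega\chi_E\|_{L^{q(\cdot)}(X)}$. Once these Banach-function-space mechanics are in place, the proof reduces entirely to the strong-type case handled in Theorem \ref{Cha.Apq}.
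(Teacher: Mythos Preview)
Your proof is correct, but it takes a shorter route than the paper's main argument for Theorem~\ref{W.Cha.Apq}. The paper proves the necessity by working directly with the weak-type definition: it first shows $\|\omega\chi_B\|_{q(\cdot)}<\infty$, then splits into the cases $\|\omega_i^{-1}\chi_B\|_{p_i'(\cdot)}<\infty$ and $=\infty$. In the finite case it constructs explicit test functions by hand (separating the sets $B_0=\{p_i'<\infty\}$ and $B_\infty=\{p_i'=\infty\}$, truncating to $B_R=\{p_i'<R\}$, and using a continuity argument on an auxiliary modular function $G(\lambda)$); in the infinite case it uses a perturbation $\omega_{i,\epsilon}=\max(\omega_i,\epsilon)$ and passes to the limit via Fatou.

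Your key observation --- that $\A_{\eta,B}(\vec f)=c\chi_B$ is a single-level function, whence its weak and strong $L^{q(\cdot)}(X,\omega)$ norms coincide --- bypasses all of this. It reduces the weak-type necessity to the strong-type necessity of Theorem~\ref{Cha.Apq}, where the abstract norm-conjugate formula (Lemma~\ref{lemma:dual}) applies directly, with constants depending only on $\vec p(\cdot)$. The paper does give essentially your duality argument as a ``concise proof'' at the end of Section~\ref{proof1}, but only for the strong-type statements; the single-level observation is the missing bridge that you supply. What the paper's longer approach buys is that it is self-contained and makes the extremizers explicit (useful if one later wants to track sharp constants or handle the endpoint $p_i(\cdot)\in\mathscr P_1$), whereas your approach is cleaner and makes transparent that no genuinely new weak-type machinery is needed.
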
 

\begin{remark}
In fact, the sufficiency of Theorems \ref{Cha.Apq} and \ref{W.Cha.Apq} may be derived as follows.
    \begin{align*}
        {\left\| {{\A_{\eta ,B}}(\vec f)} \right\|_{{L^{q( \cdot )}}(X,\omega )}} \le& {\mu(B)^{ \eta- m }}{\left\| {\omega {\chi _B}} \right\|_{{{q( \cdot )}}}}\prod\limits_{i = 1}^m {{{\left\| {\omega _i^{ - 1}{\chi _B}} \right\|}_{{{{p_i}^\prime ( \cdot )}}}}{{\left\| {{f_i}} \right\|}_{{L^{{p_i}( \cdot )}}({X,\omega _i})}}} \\ \le& {\left[ {\vec \omega } \right]_{{A_{\vec p( \cdot ),q( \cdot )}}}}\prod\limits_{i = 1}^m {{{\left\| {{f_i}} \right\|}_{{L^{{p_i}( \cdot )}}(X,{\omega _i})}}}.
    \end{align*}
\end{remark}

Next, we are ready to give characterizations for $A_{\vec{p}(\cdot),q(\cdot)}(X)$ by multilinear fractional maximal operator $\M_\eta$.

\begin{theorem}\label{mainthm_0}
    Let  $p_i( \cdot ) \in LH\cap \P$, $i = 1, \ldots ,m$, with $\frac{1}{{p( \cdot )}} = \sum\limits_{i = 1}^m {\frac{1}{{{p_i}( \cdot )}}}$, $ \frac{1}{p(\cdot)}-\frac{1}{q(\cdot)}=\eta \in[0,m)$, and $\vec \omega$ is a multiple weight with $\omega :=\prod\limits_{i = 1}^m {{\omega _i}}$. Then ${{{\mathscr M}_{\eta }}}$ is bounded from ${L^{{p_1}( \cdot )}}(X,{\omega _1}) \times  \cdots  \times {L^{{p_m}( \cdot )}}({X,\omega _m})$ to ${L^{q( \cdot )}}(X,\omega)$ if and only if $\vec \omega  \in {A_{\vec p( \cdot ),q( \cdot )}}(X)$.
    
Moreover, we have that 
\[
{\left[ {\vec \omega } \right]_{{A_{\vec p( \cdot ),q( \cdot )}}(X)}} \lesssim {\left\| \mathscr{M}_\eta \right\|_{\prod\limits_{i = 1}^m {{L^{p_i( \cdot )}}({X,\omega _i})}  \to {L^{q( \cdot )}}(X,\omega )}} \le  {C_{\vec \omega ,\eta ,m,\mu ,X,\vec p( \cdot )}}.
\]
where the implicit constant only depends on the $\vec{p}(\cdot)$.
    \end{theorem}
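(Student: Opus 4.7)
The plan is to split the proof into a short necessity part and a longer sufficiency part. For necessity, since $\mathscr{M}_\eta(\vec f)(x) \ge \A_{\eta,B}(\vec f)(x)$ pointwise for every ball $B$ and every nonnegative tuple $\vec f$, the boundedness of $\mathscr{M}_\eta$ from $\prod_{i=1}^m L^{p_i(\cdot)}(X,\omega_i)$ into $L^{q(\cdot)}(X,\omega)$ immediately implies the uniform-in-$B$ boundedness of the averaging operators $\A_{\eta,B}$ with the same operator norm. Theorem \ref{Cha.Apq} then yields $\vec\omega \in A_{\vec p(\cdot),q(\cdot)}(X)$ together with the lower estimate $[\vec\omega]_{A_{\vec p(\cdot),q(\cdot)}(X)} \lesssim_{\vec p(\cdot)} \|\mathscr{M}_\eta\|$.

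For sufficiency, I would first invoke the Hyt\"onen-Kairema system of finitely many adjacent dyadic grids $\{\D^t\}$ on $(X,d,\mu)$ to dominate $\mathscr{M}_\eta$ pointwise by a sum of its dyadic analogues $\mathscr{M}^{\D^t}_\eta$, reducing matters to a single dyadic grid $\D$. For a fixed nonnegative tuple $\vec f$, I would then run a Calder\'on-Zygmund-type principal-cube stopping-time on $\D$, recursively selecting the maximal dyadic descendants $Q$ on which $\mu(Q)^\eta \prod_{i=1}^m \langle f_i\rangle_Q$ strictly increases by a fixed geometric factor relative to the previous principal ancestor. This produces a sparse family $\calS$ with a pairwise disjoint decomposition $\{E_Q\}_{Q\in\calS}$ satisfying $\mu(E_Q) \gtrsim \mu(Q)$, and the pointwise discretization $\mathscr{M}^{\D}_\eta(\vec f)(x) \lesssim \sum_{Q\in\calS} \mu(Q)^\eta \big(\prod_{i=1}^m \langle f_i\rangle_Q\big) \chi_{E_Q}(x)$.

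On each principal cube $Q$ I would localize the variable exponents via the LH-conditions on $p_i(\cdot)$ and $q(\cdot)$ together with the $A_\infty$ properties of $u(\cdot) := \omega(\cdot)^{q(\cdot)}$ and $\sigma_j(\cdot) := \omega_j(\cdot)^{-p_j'(\cdot)}$ obtained from Lemma \ref{Ap_Ainfty_}, yielding the standard equivalences $\|\omega\chi_Q\|_{q(\cdot)} \approx u(Q)^{1/q(Q)}$ and $\|\omega_i^{-1}\chi_Q\|_{p_i'(\cdot)} \approx \sigma_i(Q)^{1/p_i'(Q)}$ for representative values $p_i(Q)$, $q(Q)$. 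Inserting the $A_{\vec p(\cdot),q(\cdot)}(X)$ characteristic to control $\mu(Q)^\eta \prod_i \langle f_i\rangle_Q$ and assembling the sparse sum with the disjointness of $\{E_Q\}$, the target $L^{q(\cdot)}(X,\omega)$ bound reduces to a constant-exponent dyadic multilinear inequality on each $Q$, which is handled by multilinear H\"older followed by the linear variable-exponent maximal inequality from Theorem $A_{12}$ applied to an auxiliary weight built from $\vec\omega$.

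The hardest part is the final synthesis: since the sparse family $\calS$ depends on $\vec f$, the constant-exponent reduction on each $Q$ must be performed so that the local exponents $p_i(Q)$ and $q(Q)$ telescope consistently across the hierarchy, and so that all passage-between-norms constants remain controlled by a quantity polynomial in $[\vec\omega]_{A_{\vec p(\cdot),q(\cdot)}(X)}$ and the LH-constants of $\vec p(\cdot), q(\cdot)$. This requires delicate bookkeeping of the $A_\infty$ characteristics of $u$ and $\sigma_j$, the sparseness of $\calS$, and multilinear H\"older in variable-exponent Lebesgue spaces; it is precisely where the multilinear structure, the fractional parameter $\eta$, and the variable exponents interact most non-trivially.
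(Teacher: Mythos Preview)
Your necessity argument is correct and matches the paper exactly. Your sufficiency outline also begins the same way as the paper: reduce to a single dyadic grid via Lemma~\ref{Suff_1}, build the Calder\'on--Zygmund cubes $\{Q_j^k\}$ with pairwise disjoint $\{E_j^k\}$ via Lemma~\ref{CZD}, and obtain the pointwise discretization~\eqref{zdkz_}.

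The gap is in your ``final synthesis''. The paper does \emph{not} reduce to the linear result Theorem~$A_{12}$ applied to an auxiliary weight; there is no evident one-weight linear problem that encodes the multilinear $A_{\vec p(\cdot),q(\cdot)}$ condition, and multilinear H\"older alone does not collapse the product structure into something Theorem~$A_{12}$ can handle. Instead, the paper works entirely at the modular level (showing $\int_X \mathscr M_\eta^{\mathcal D}(f_1\sigma_1,\dots,f_m\sigma_m)^{q(x)}u(x)\,d\mu\lesssim 1$ after normalizing $\|f_i\|_{L^{p_i(\cdot)}_{\sigma_i}}=1$) and proceeds through two ingredients you have omitted. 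First, each $f_i$ is split as $f_i\chi_{\{f_i>1\}}+f_i\chi_{\{f_i\le 1\}}$; this is what allows replacing the variable power $q(x)$ on $\big(\int_Q h\sigma\big)^{q(x)}$ by a cube-dependent constant exponent (namely $\delta(Q)$, $q_+(Q)$, or $q_\infty$, depending on the piece), since the relevant base is then either $\le 1$ or controlled via~\eqref{Bound_function_geq1}. Second, the CZ cubes are partitioned into three families $\mathscr F,\mathscr G,\mathscr H$ according to their position relative to a fixed anchor cube $Q_0$, and each family requires a genuinely different argument: $\mathscr F$ rests on the key modular estimate~\eqref{EQ-I1estimte}, $\mathscr G$ uses $LH_\infty$ and Lemma~\ref{lemma:p-infty-px} to pass to $q_\infty$, and $\mathscr H$ needs a further subdivision according to whether $\sigma_l(Q_j^k)\lessgtr 1$. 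All of these branches are ultimately closed not by any variable-exponent maximal inequality but by the \emph{constant-exponent} weighted dyadic bound $\|M_\sigma^{\mathcal D}\|_{L^p(\sigma)\to L^p(\sigma)}<\infty$ of Lemma~\ref{M.eta.bound} combined with Young's inequality. Your remark that the synthesis is ``the hardest part'' is accurate, but the large/small splitting and the $\mathscr F/\mathscr G/\mathscr H$ case analysis are the substance of the argument, not bookkeeping layered over a reduction to the linear theory.
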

    
\begin{theorem}\label{mainthm_1}
    Let  $p_i( \cdot ) \in LH\cap \P$, $i = 1, \ldots ,m$, with $\frac{1}{{p( \cdot )}} = \sum\limits_{i = 1}^m {\frac{1}{{{p_i}( \cdot )}}}$, $ \frac{1}{p(\cdot)}-\frac{1}{q(\cdot)}=\eta \in[0,m)$, and $\vec \omega$ is a multiple weight with $\omega :=\prod\limits_{i = 1}^m {{\omega _i}}$. Then ${{{\mathscr M}_{\eta }}}$ is bounded from ${L^{{p_1}( \cdot )}}(X,{\omega _1}) \times  \cdots  \times {L^{{p_m}( \cdot )}}(X,{\omega _m})$ to ${WL^{q( \cdot )}}(X,\omega)$ if and only if $\vec \omega  \in {A_{\vec p( \cdot ),q( \cdot )}}(X)$.

Moreover, we have that 
\[
{\left[ {\vec \omega } \right]_{{A_{\vec p( \cdot ),q( \cdot )}}(X)}} \lesssim {\left\| \mathscr{M}_\eta \right\|_{\prod\limits_{i = 1}^m {{L^{p_i( \cdot )}}({X,\omega _i})}  \to {WL^{q( \cdot )}}(X,\omega )}} \le  {C_{\vec \omega ,\eta ,m,\mu ,X,\vec p( \cdot )}}.
\]
where the implicit constant is independent on the $\vec{\omega}$.
    \end{theorem}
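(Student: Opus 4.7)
For the lower bound $[\vec\omega]_{A_{\vec p(\cdot),q(\cdot)}(X)} \lesssim \|\M_\eta\|$ I will use pointwise domination. Since $\A_{\eta,B}(\vec f)(x) \le \M_\eta(\vec f)(x)$ for every ball $B\subseteq X$, the weak-$L^{q(\cdot)}(\omega)$ norm is monotone under this pointwise inequality, so uniformly in $B$
\[
\|\A_{\eta,B}\|_{\prod_{i=1}^m L^{p_i(\cdot)}(X,\omega_i)\to WL^{q(\cdot)}(X,\omega)} \le \|\M_\eta\|_{\prod_{i=1}^m L^{p_i(\cdot)}(X,\omega_i)\to WL^{q(\cdot)}(X,\omega)}.
\]
Theorem \ref{W.Cha.Apq} then supplies $[\vec\omega]_{A_{\vec p(\cdot),q(\cdot)}(X)} \lesssim \|\A_{\eta,B}\|$ with implicit constant independent of $\vec\omega$, and necessity follows.

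\textbf{Sufficiency -- short route.} The cleanest argument is to rely on the strong-type Theorem \ref{mainthm_0}: since $\lambda\chi_{\{|F|>\lambda\}}\le|F|$ pointwise, we have $\|F\|_{WL^{q(\cdot)}(X,\omega)} \le \|F\|_{L^{q(\cdot)}(X,\omega)}$ for every $F$, so that
\[
\|\M_\eta\|_{\prod_i L^{p_i(\cdot)}(X,\omega_i)\to WL^{q(\cdot)}(X,\omega)} \le \|\M_\eta\|_{\prod_i L^{p_i(\cdot)}(X,\omega_i)\to L^{q(\cdot)}(X,\omega)} \le C_{\vec\omega,\eta,m,\mu,X,\vec p(\cdot)}.
\]
Since both theorems share the hypotheses $p_i(\cdot)\in LH\cap\P$, no extra work is needed. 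This route gives the full quantitative statement claimed in the theorem.

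\textbf{Sufficiency -- direct alternative and the main obstacle.} If instead one seeks a self-contained proof paralleling the $m=1$ weak-type argument in \cite{Cen2024} (recalled as Theorem $A_{13}$ above), the plan would be the following: first, discretize $\M_\eta$ via Hyt\"onen--Kairema adjacent dyadic systems $\D^1,\dots,\D^K$ on $(X,d,\mu)$ so that $\M_\eta\vec f \lesssim \sum_k \M_\eta^{\D^k}\vec f$ pointwise; second, for each dyadic system $\D$ and each level $\lambda>0$, run a Calder\'on--Zygmund stopping time to extract the maximal disjoint cubes $\{Q_j\}\subset\D$ with $\mu(Q_j)^\eta\prod_i\langle f_i\rangle_{Q_j}>\lambda$, so that $\{\M_\eta^{\D}\vec f>\lambda\}=\bigsqcup_j Q_j$; third, on each individual cube combine the $A_{\vec p(\cdot),q(\cdot)}$ condition with the generalized H\"older inequality for variable Lebesgue spaces (exactly as in the remark following Theorems \ref{Cha.Apq}--\ref{W.Cha.Apq}) to obtain
\[
\lambda\|\omega\chi_{Q_j}\|_{L^{q(\cdot)}} \le \mu(Q_j)^\eta\prod_i\langle f_i\rangle_{Q_j}\,\|\omega\chi_{Q_j}\|_{L^{q(\cdot)}} \lesssim [\vec\omega]_{A_{\vec p(\cdot),q(\cdot)}}\prod_i\|f_i\|_{L^{p_i(\cdot)}(X,\omega_i)}.
\]
The hard step in the direct route will be assembling these individual cube bounds into a bound on $\|\omega\chi_{\bigsqcup_j Q_j}\|_{L^{q(\cdot)}}$, since the Luxemburg norm is not additive on disjoint unions when the exponent is variable. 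I would address this at the level of the modular: disjointness gives $\rho_{q(\cdot)}(\omega\chi_{\bigsqcup_j Q_j})=\sum_j\rho_{q(\cdot)}(\omega\chi_{Q_j})$, and then the $A_\infty(X)$ membership of $u:=\omega^{q(\cdot)}$ supplied by Lemma \ref{Ap_Ainfty_} (via a reverse-H\"older self-improvement) permits control of the sum, before translating back to the Luxemburg scale through the standard modular--norm comparison in $L^{q(\cdot)}$.
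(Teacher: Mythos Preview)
Your proposal is correct and follows the paper's own route essentially verbatim: for necessity the paper also invokes the pointwise domination $\A_{\eta,B}\le\M_\eta$ and reduces to Theorem~\ref{W.Cha.Apq}, and for sufficiency the paper explicitly states that proving the strong-type Theorem~\ref{mainthm_0} ``implies the sufficiency of the other theorems,'' i.e., exactly your short route via $\|F\|_{WL^{q(\cdot)}(X,\omega)}\le\|F\|_{L^{q(\cdot)}(X,\omega)}$. Your ``direct alternative'' paragraph is extra commentary the paper does not pursue, and it is not needed for the result.
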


\begin{remark}
For Theorems \ref{mainthm_0} and \ref{mainthm_1}, if \(\mu(X) < +\infty\), due to Lemma \ref{finite-bounded}, we can relace \( L H\) with \(L H_0\).
\end{remark}

\begin{remark}
It is not hard to find that Theorems \ref{mainthm_0} and \ref{mainthm_1} generalize Theorems {\bf $A_1$--$A_{13}$}.
\end{remark}

\subsubsection{\bf New variable text conditions}\label{sec2_2}
~~

In fact, there are other ways to study the weighted boundedness of the maximal operators, which are called the text conditions.
In 1982, Sawyer \cite{S} gave a characterization of a two-weighted norm characterization for $M_\alpha$, who given the follow theorems.

\, \hspace{-20pt}{\bf Theorem $B_1$}(\cite{S}). {\it\ The fractional maximal operator $M_\alpha$ is bounded
from $L^p(\rn,u)$ to $L^q(\rn,v)$ if and only if $(u,v)$ satisfies that
\[
  [u,v]_{S_{p,q}}:=\sup_{Q \subseteq \rn}\frac{\left(\int_{Q}M_\alpha(\sigma \chi_Q)^q vdx\right)^{1/q}}{\sigma(Q)^{1/{p}}}<\infty,
\]
where $\sigma=u^{1-p'}$, $0\le\alpha<n$, $1<p<n/\alpha$ and $1/q=1/p-\alpha/n$.
}

The above inequality is known as Sawyer's test condition.

In 2009, Moen \cite{M1} improved Sawyer's result by showing that $ \|M_\alpha\|_{L^p(u)\rightarrow L^q(v)}\approx [u,v]_{S_{p,q}}.$ 

\, \hspace{-20pt}{\bf Theorem $B_2$}(\cite{M1}). {\it\ Suppose that $0 \leq \alpha<n, 1<p \leq q<\infty$, and $(u, v)$ is a pair of weights with $\sigma=v^{1-p^{\prime}}$. Then
$M_\alpha$ is bounded from ${L^p(v)}$ to ${L^q(u)}$ if and only if $(u, v)$ satisfies
$$
[u, v]_{S_{p, q}}=\sup _Q \frac{\left(\int_Q M_\alpha\left(\chi_Q \sigma\right)^q u d x\right)^{1 / q}}{\sigma(Q)^{1 / p}}<\infty,
$$
In this case, 
$$
{\left\| {{M_\alpha }} \right\|_{{L^p}(v) \to {L^q}(u)}} \lesssim [u, v]_{S_{p, q}} .
$$
}

In 2013, Chen and Damian \cite{CD} continued to develop and prove the following condition for the boundness of multilinear fractional maximal operator $\M_{\alpha}$ in the case of two weights.

We say that $\vec{\omega}$ satisfies the $R H_{\vec{p}}$ condition if there exists a positive constant $C$ such that
\begin{align}\label{2.3.3}
     \prod_{i=1}^m\left(\int_Q \sigma_i d x\right)^{\frac{p}{p_i}} \leq C \int_Q \prod_{i=1}^m \sigma_i^{\frac{p}{p_i}} d x,
\end{align}
where $\sigma_i=\omega_i^{1-p_i^{\prime}}$ for $i=1, \ldots, m$. We denote by $[\vec{\omega}]_{R H_{\vec{p}}}$ the smallest constant $C$ in \eqref{2.3.3}.

\, \hspace{-20pt}{\bf Theorem $B_3$}(\cite{CD}). {\it\ Let $1<p_i<\infty, i=1, \ldots, m$ and $\frac{1}{p}=\frac{1}{p_1}+\ldots+\frac{1}{p_m}$. Let $v$ and $\omega_i$ be weights. If we suppose that $\vec{\omega} \in R H_{\vec p}$ then there exists a positive constant $C$ such that
\begin{align}\label{3.3.3}
    \|\M({\vec f \sigma})\|_{L^p(v)} \leq C \prod_{i=1}^m\left\|f_i\right\|_{L^{p_i}\left(\sigma_i\right)}, \quad f_i \in L^{p_i}\left(\sigma_i\right),
\end{align}
where $\sigma_i=\omega_i^{1-p_i^{\prime}}$, if and only if $(\vec{\omega},v) \in S_{\vec p}$. Moreover,
$$
{[\vec \omega ,v]_{{S_{\vec p}}}} \lesssim {\left\| M \right\|_{\prod\limits_{i = 1}^m {{L^{{p_i}}}({\sigma _i}) \to {L^p}(v)} }} \lesssim {[\vec \omega, v]_{{S_{\vec p}}}}[\vec \omega ]_{R{H_{\vec p}}}^{\frac{1}{p}}.
$$
where $[\vec{\omega},v]_{S_{\vec p}}: =\sup _Q\left(\int_Q \M \left({\vec \sigma \chi_Q}\right)^p v d x\right)^{\frac{1}{p}}\left(\prod_{i=1}^m \sigma_i(Q)^{\frac{1}{p_i}}\right)^{-1}<\infty$.
}

Li and Sun \cite{Lik} in same year built the following multilinear Sawyer's text condition for $\M_\alpha$.

\, \hspace{-20pt}{\bf Theorem $B_4$}(\cite{Lik}). {\it\ Suppose that $0\le \alpha<mn$, that $1<p_1,\cdots,p_m<\infty$, that $1/{p}=1/{p_1}+\cdots+1/{p_m}$, that $1/q=1/p-\alpha/n$ and that $q\ge \max_i\{p_i\}$.
Let $\omega_1$, $\cdots$, $\omega_m$, $v$ be weights and set $\sigma_i=\omega_i^{1-p_i'}$, $i=1,\cdots,m$. Define
\[
  [\vec{\omega},v]_{S_{\vec{p},q}}:=\sup_{Q \subseteq \rn}\frac{\left(\int_{Q}\M_\alpha(\sigma_1 1_Q,\cdots,\sigma_m 1_Q)^q vdx\right)^{1/q}}{\prod_{i=1}^m \sigma_i(Q)^{1/{p_i}}}.
\]
Then $\M_\alpha$ is bounded from $L^{p_1}(\omega_1)\times\cdots\times  L^{p_m}(\omega_m)$ to $L^q(v)$
if and only if $   (\vec{\omega},v) \in {S_{\vec{p},q}}$. Moreover,
\[
  \|\M_\alpha\|_{L^{p_1}(\omega_1)\times\cdots\times  L^{p_m}(\omega_m)\rightarrow L^q(v)}\approx[\vec{\omega},v]_{S_{\vec{p},q}}.
\]
}


\begin{que}
Whether we may establish corresponding variable exponent test conditions to enable the fractional maximal operator $M_\eta$ to obtain new two-weighted estimates?
\end{que}
The following new variable test condition may assist us to answer this question. 
\begin{definition}
Let $\omega$ and $v$ are both weights. We define $u(\cdot):=v(\cdot)^{q(\cdot)},\sigma(\cdot):=\omega(\cdot)^{-p'(\cdot)}$ and denote $p_1=p_{-}, p_2=p_{+}.$ We say $(\omega, v) \in C_{p(\cdot), q(\cdot)}(X)$, if

(1) ${[\omega ]_{C_{p( \cdot ),q( \cdot )}^1(X)}}: =\mathop {\sup }\limits_{Q \in \d} \sigma {(Q)^{{q_ - } - q(x)}}{\chi _Q}(x) < \infty,$

(2) $[\omega, v]_{C_{p(\cdot), q(\cdot)}^2(X)}:=\sup\limits _{Q \in \d}\left(\int_Q M_\eta^{\d}\left(\sigma x_Q\right)^{q(x)} u(x) d x\right)^{\frac{1}{q_-}} \cdot \max\limits _{t=1,2}\sigma(Q)^{-\frac{1}{p_t}}< \infty,$

where $\mathcal{D}$ represents the dyadic grid of X whose details may be found in Lemma \ref{cubes}.
We usually abbreviate ${C_{\vec p( \cdot ),q( \cdot )}}(X)$ to ${C_{\vec p( \cdot ),q( \cdot )}}$.
\end{definition}

We next give quantitative variable two-weighted estimates for $M_\eta$ to the answer this question.

\begin{theorem}\label{Text.condi.}
Let $p(\cdot), q(\cdot) \in \mathscr{P}$ with ${p_ + } \le {q_ - }$. Set $\omega,v$ are both weights. If  $(\omega, v) \in C_{p(\cdot), q(\cdot)}(X)$, then $M_\eta$ is bounded from $L^{p(\cdot)}(X, \omega)$ to $L^{q(\cdot)}(X, v)$.
In addition, we have that 
\begin{align*}
    \|M_{\eta}\|_{L^{p(\cdot)}(X,\omega)\rightarrow L^{q(\cdot)}(X,v)} \lesssim \sum\limits_{\theta  = \frac{1}{{{p_{\rm{ - }}}}},\frac{1}{{{p_{\rm{ + }}}}}} {{{\left( {{{[\omega ,v]}_{C_{p( \cdot ),q( \cdot )}^2(X)}} + {{[\omega ]}_{C_{p( \cdot ),q( \cdot )}^1(X)}}{{[\omega ,v]}_{C_{p( \cdot ),q( \cdot )}^2(X)}}} \right)}^\theta }}.
\end{align*}
where implicit constant is independent on the $(\omega,v)$. 
\end{theorem}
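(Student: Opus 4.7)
The plan is to follow the classical Sawyer testing-condition approach, adapted to the variable-exponent, space-of-homogeneous-type setting. First I would reduce the problem pointwise from $M_\eta$ to the dyadic fractional maximal operator $M_\eta^{\d}$ relative to the dyadic grid $\d$ that appears in the testing condition; using the Hyt\"onen--Kairema adjacent dyadic systems on $(X,d,\mu)$, $M_\eta f$ is pointwise comparable to a finite sum of dyadic fractional maximal operators, and since $C_{p(\cdot),q(\cdot)}(X)$ is formulated dyadically it suffices to prove the estimate for $M_\eta^{\d}$. A convenient change of perspective is to substitute $f = g\sigma$ with $\sigma := \omega^{-p'(\cdot)}$, so that $\rho_{p(\cdot)}(\omega f) = \int_X g(x)^{p(x)}\,\sigma(x)\,d\mu$, pushing $\sigma$ into the measure and linking the quantity $\langle f\rangle_Q$ to $\frac{1}{\mu(Q)}\int_Q g\,d\sigma$, which drives the test condition.

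Next I would perform the standard Calder\'on--Zygmund stopping-time decomposition. For $a>1$ sufficiently large (depending on the doubling constant $C_\mu$), set $\Omega_k := \{x : M_\eta^{\d}(g\sigma)(x) > a^k\}$, extract the maximal dyadic cubes $\{Q_j^k\}_j$ tiling $\Omega_k$, and let $E_j^k := Q_j^k \setminus \Omega_{k+1}$. The $\{E_j^k\}_{j,k}$ are pairwise disjoint, $M_\eta^{\d}(g\sigma)\leq a^{k+1}$ on $E_j^k$, and by maximality $a^k \leq \mu(Q_j^k)^{\eta-1}\int_{Q_j^k} g\, d\sigma \leq a\cdot a^k$. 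Consequently the modular splits as
\begin{equation*}
\rho_{q(\cdot)}\!\left(\tfrac{v\, M_\eta^{\d}(g\sigma)}{\lambda}\right) \;\lesssim\; \sum_{j,k}\int_{E_j^k}\!\!\left(\tfrac{v(x)}{\lambda}\,\mu(Q_j^k)^{\eta-1}\!\int_{Q_j^k}\! g\, d\sigma\right)^{\!q(x)}\!\! d\mu(x).
\end{equation*}
I would then apply the variable exponent H\"older inequality in the $\sigma$-weighted space to factor $\int_{Q_j^k} g\, d\sigma \lesssim \|g\chi_{Q_j^k}\|_{L^{p(\cdot)}(\sigma\, d\mu)}\,\|\chi_{Q_j^k}\|_{L^{p'(\cdot)}(\sigma\, d\mu)}$, invoke the $C^2_{p(\cdot),q(\cdot)}$ condition to convert the local ``$M_\eta^{\d}(\sigma\chi_{Q_j^k})$'' integral into the quantity $\sigma(Q_j^k)^{q_-/p_t}$ for $t\in\{1,2\}$, and use the $C^1_{p(\cdot),q(\cdot)}$ constant to exchange the variable $q(x)$-power for the constant $q_-$-power demanded by the $C^2$ formulation.

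The remaining step is to sum back. Using the disjointness of $\{E_j^k\}$ together with the hypothesis $p_+\leq q_-$, an $\ell^{q_-/p_\ast}$-embedding step combined with a geometric summation in $k$ consolidates the double sum into $\rho_{p(\cdot)}(\omega f)$; the two exponents $\theta\in\{1/p_-,1/p_+\}$ in the conclusion arise from the standard modular/Luxemburg comparison $\|\cdot\|_{p(\cdot)}\approx \min(\rho_{p(\cdot)}(\cdot)^{1/p_-},\rho_{p(\cdot)}(\cdot)^{1/p_+})$, applied once on the $p(\cdot)$-side and once when translating back the $q(\cdot)$-modular inequality into the desired norm inequality. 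The main obstacle is precisely this bookkeeping in the middle step: applying variable H\"older in the $\sigma$-weighted Lebesgue space while tracking the dependence on $p_\pm$, and exchanging $q(x)$ for $q_-$ under the integral --- this is exactly where the auxiliary constant $[\omega]_{C^1_{p(\cdot),q(\cdot)}(X)}$ is absorbed and where the sum over $\theta\in\{1/p_-,1/p_+\}$ becomes unavoidable.
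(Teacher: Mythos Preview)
Your setup through the Calder\'on--Zygmund decomposition and the role of $C^1_{p(\cdot),q(\cdot)}$ in replacing $q(x)$ by $q_-$ is on the right track, but the ``sum back'' step as you describe it has a genuine gap, and the variable-exponent H\"older factorisation you propose is not the tool that closes it. If you apply $C^2_{p(\cdot),q(\cdot)}$ cube-by-cube to each $Q_j^k$, you obtain terms of the form $\sigma(Q_j^k)^{q_-/p_t}$; but the $Q_j^k$ are nested across different $k$, so these terms are not summable, and neither an $\ell^{q_-/p_\ast}$-embedding nor geometric decay in $k$ recovers a finite bound from this. The disjointness of the $E_j^k$ alone cannot rescue this, because after invoking $C^2$ on $Q_j^k$ you have already lost the localisation to $E_j^k$.

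The paper avoids this by a Marcinkiewicz interpolation argument, which is the heart of Sawyer's original method and which your outline omits. Concretely: after splitting $f=h_1+h_2$ into the parts where $f>1$ and $f\le 1$ (this, not H\"older, is how the exponents $p_-$ and $p_+$ enter), one reduces each piece to a sum $\sum_{k,j}\langle h_t\rangle_{\sigma,Q_j^k}^{q_-}\Gamma_j^k$ with $\Gamma_j^k=\int_{E_j^k}\bigl(\mu(Q_j^k)^{\eta-1}\sigma(Q_j^k)\bigr)^{q(x)}u(x)\,d\mu$. One then regards $\Gamma$ as a measure on the index set $\Omega=\{(k,j)\}$ and defines the averaging operator $S(g)(k,j)=\langle g\rangle_{\sigma,Q_j^k}$. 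Trivially $S:L^\infty(\sigma)\to L^\infty(\Omega,\Gamma)$. The weak-type bound $S:L^1(\sigma)\to L^{q_-/p_t,\infty}(\Omega,\Gamma)$ is where $C^2$ is used correctly: at height $\lambda$ one selects the \emph{maximal} cubes $\{J_i\}$ with $\langle g\rangle_{\sigma,J_i}>\lambda$, which are pairwise disjoint, and applies $C^2$ only to these $J_i$; the resulting $\sum_i\sigma(J_i)^{q_-/p_t}$ is then controlled by $(\lambda^{-1}\!\int g\,d\sigma)^{q_-/p_t}$ via disjointness and the stopping inequality. Marcinkiewicz interpolation yields $\|S(h_t)\|_{L^{q_-}(\Omega,\Gamma)}\lesssim[\omega,v]_{C^2}\|h_t\|_{L^{p_t}(\sigma)}$, and the large/small splitting gives $\|h_1\|_{L^{p_-}(\sigma)},\|h_2\|_{L^{p_+}(\sigma)}\le 1$. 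You should replace your H\"older-and-sum step with this interpolation scheme.
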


\begin{remark}
When $p(\cdot) \equiv p$, $q(\cdot) \equiv q$, we may remove the condition ${C_{p(\cdot),q(\cdot)}^1}(X)$, meanwhile, the condition ${C_{p(\cdot), q(\cdot)}^2(X)}$ is back to $S_{p,q}(X)$, which is the classical Sawyer's condition. It follows that 
\begin{align*}
    \|M_{\eta}\|_{L^{p}(X,\omega^p)\rightarrow L^{q}(X,v^q)} \lesssim  [\omega, v]_{S_{p,q}(X)}^{\frac{1}{p}},
\end{align*}
It is not difficult to see that ${C_{p(\cdot), q(\cdot)}}(X)$ is indeed a variable exponent extension of $S_{p,q}(X)$.

\end{remark}

\subsection{Structure and Notations}
~~

The structure of rest of the paper is as follows. In Sect. \ref{sec3}, we build and give some fundamental lemmas, which play a important roles in our proof. In Sect. \ref{proof}, we prove Theorems \ref{mainthm_0} and \ref{mainthm_1}. Finally, we prove Theorem \ref{Text.condi.} using the method of Sawyer in Sect. \ref{sec.5.}.

We need to introduce some notations for use in this paper.

For some positive constant \(C\) independent of appropriate parameters, we denote \(A \lesssim B\) to mean that \(A \leq CB\), and \(A \approx B\) to indicate that \(A \lesssim B\) and \(B \lesssim A\). Additionally, \(A \lesssim_{\alpha, \beta} B\) signifies that \(A \leq C_{\alpha,\beta} B\), where \(C_{\alpha,\beta}\) depends on \(\alpha\) and \(\beta\).
Consider an open set \(E \subseteq X\) and a measurable function \(p(\cdot): E \to [1, \infty)\). The conjugate exponent \(p'(\cdot)\) is defined by \(p'(\cdot) = \frac{p(\cdot)}{p(\cdot) - 1}\).

A weight \(\omega: X \rightarrow [0, \infty]\) is a locally integrable function that satisfies \(0 < \omega(x) < \infty\) for almost every \(x \in X\). Given a weight \(\omega\), its associated measure is defined as \(d\omega(x) = \omega(x) d\mu(x)\), and we also denote $\omega(E)=\int_E \omega(x)\,d\mu$. 
Given a set $E \subseteq X$, a function $f$, and $\sigma$ is a weight, we denote weighted averages by
\[\avf_{\sigma,E}=\frac{1}{\sigma(E)}\int_E \sigma \,d\mu.\]
If $\sigma =1$, we denote $\avf_{\sigma,E}$ by $\avf_{E}$.

\section{\bf Preliminaries}\label{sec3}

\subsection{Variable Lebesgue Spaces}\label{3.1}
~

This subsection presents foundational lemmas for variable Lebesgue spaces over the spaces of homogeneous type, which are crucial for supporting our main results. Some lemma proofs are the same as in the Euclidean case, with no essential technical differences. Readers can refer to \cite{red,Cruz2022,Di2011} for more details.

 \begin{lemma}[\cite{red}, Theorem 2.61]\label{lemma:fatou}
Given $\pp\in \P_0$, if $f\in L^{p(\cdot)}$ is such that $\{f_k\}$ convergesto $f$ pointwise a.e., then
	\[ 
	\|f\|_{\pp} \leq \liminf_{k\rightarrow \infty} \|f_k\|_{\pp}. \]
\end{lemma}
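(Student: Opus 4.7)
The plan is to reduce the lemma to classical Fatou for the modular and then translate back to the Luxemburg norm. Set $\lambda := \liminf_{k\to\infty}\|f_k\|_{p(\cdot)}$; if $\lambda=\infty$ there is nothing to prove, so assume $\lambda<\infty$. For any fixed $\lambda' > \lambda$, pass to a subsequence $\{f_{k_j}\}$ so that $\|f_{k_j}\|_{p(\cdot)}<\lambda'$ for all $j$ large enough; by the very definition of the Luxemburg norm this means $\rho_{p(\cdot)}(f_{k_j}/\lambda')\le 1$ for those $j$.

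Now I would split the modular into its two pieces according to the definition given earlier, namely the integral part $\int_X |f(x)/\lambda'|^{p(x)}\,d\mu$ and the $L^\infty$ part on $X_\infty$ (the set where $p(x)=\infty$). For the integral part, note that pointwise a.e.\ convergence $f_{k_j}\to f$ gives $|f_{k_j}/\lambda'|^{p(x)}\to |f/\lambda'|^{p(x)}$ a.e., so the ordinary scalar Fatou lemma yields
\[
\int_X \bigl|f(x)/\lambda'\bigr|^{p(x)}\,d\mu(x) \le \liminf_{j\to\infty}\int_X \bigl|f_{k_j}(x)/\lambda'\bigr|^{p(x)}\,d\mu(x).
\]
For the $L^\infty$ piece, I would use the standard fact that the essential supremum is lower semicontinuous under pointwise a.e.\ convergence, i.e.\ $\|f/\lambda'\|_{L^\infty(X_\infty)}\le\liminf_j \|f_{k_j}/\lambda'\|_{L^\infty(X_\infty)}$. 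Adding the two bounds gives $\rho_{p(\cdot)}(f/\lambda')\le \liminf_j \rho_{p(\cdot)}(f_{k_j}/\lambda') \le 1$.

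This implies $\|f\|_{p(\cdot)}\le \lambda'$ by the definition of the Luxemburg norm, and since $\lambda'>\lambda$ was arbitrary we conclude $\|f\|_{p(\cdot)}\le \lambda$, which is the desired inequality.

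The main subtlety, and the only place where one has to be careful, is the $L^\infty$ contribution on the set $X_\infty$: the integral part follows cleanly from scalar Fatou applied to the nonnegative integrands $|f_{k_j}/\lambda'|^{p(x)}$, but if $\mu(X_\infty)>0$ one must separately verify the lower semicontinuity of the essential supremum under pointwise a.e.\ convergence. Everything else is a routine passage between modular and norm, exactly as in the Euclidean treatment in \cite{red}, since no property of the underlying metric measure space beyond measurability is used.
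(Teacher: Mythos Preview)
The paper does not give its own proof of this lemma; it is simply quoted from \cite{red}, Theorem 2.61. Your argument is correct and is exactly the standard one found there: pass to a subsequence realizing the $\liminf$ of the norms, apply scalar Fatou to the integrands $|f_{k_j}/\lambda'|^{p(x)}$, and read off the norm bound from the modular inequality.

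One minor remark: in the setting of this paper the hypothesis $p(\cdot)\in\P_0$ means $p_+<\infty$, so the set $X_\infty=\{x:p(x)=\infty\}$ is empty and the $L^\infty$ term in the modular vanishes. Your discussion of lower semicontinuity of the essential supremum is therefore unnecessary here (though harmless and correct). Also, when you write ``adding the two bounds gives $\rho_{p(\cdot)}(f/\lambda')\le \liminf_j \rho_{p(\cdot)}(f_{k_j}/\lambda')$'', you are implicitly using $\liminf A_j+\liminf B_j\le\liminf(A_j+B_j)$; this is true, but worth stating explicitly since the reverse inequality fails in general.
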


\begin{lemma}[\cite{red}, Theorem 2.59]\label{Levi}
	Let $p(\cdot) \in \P_1$. For a sequence of non-negative measureable functions, denoted as $\left\{f_k\right\}_{k=1}^{\infty}$ and increasing pointwise almost everywhere to a function $f \in L^{p(\cdot)}$, we can deduce that $\left\|f_k\right\|_{p(\cdot)} \rightarrow\|f\|_{p(\cdot)}$.
\end{lemma}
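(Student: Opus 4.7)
The plan is to prove this by a standard \emph{monotonicity $+$ Fatou} sandwich argument in the Luxemburg norm. First, I would establish the easy upper bound by exploiting that $\rho_{p(\cdot)}$ is monotone under pointwise inequality: since $0\le f_k\le f_{k+1}\le f$ a.e., for every $\lambda>0$ one has
\[
\tfrac{|f_k(x)|^{p(x)}}{\lambda^{p(x)}}\;\le\;\tfrac{|f_{k+1}(x)|^{p(x)}}{\lambda^{p(x)}}\;\le\;\tfrac{|f(x)|^{p(x)}}{\lambda^{p(x)}}\qquad\text{a.e.,}
\]
and the $L^\infty(X_\infty)$ contribution to $\rho_{p(\cdot)}$ is likewise monotone in $|f|$; integrating gives $\rho_{p(\cdot)}(f_k/\lambda)\le \rho_{p(\cdot)}(f_{k+1}/\lambda)\le \rho_{p(\cdot)}(f/\lambda)$, and taking the infimum over admissible $\lambda$ in the Luxemburg definition yields
\[
\|f_k\|_{p(\cdot)}\;\le\;\|f_{k+1}\|_{p(\cdot)}\;\le\;\|f\|_{p(\cdot)}<\infty.
\]
Hence $\{\|f_k\|_{p(\cdot)}\}$ is non-decreasing and bounded above, so $L:=\lim_{k\to\infty}\|f_k\|_{p(\cdot)}$ exists and satisfies $L\le\|f\|_{p(\cdot)}$.

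For the reverse inequality, I would invoke the Fatou-type inequality for the variable Lebesgue norm already recorded as Lemma \ref{lemma:fatou}. Since $\P_1\subseteq\P_0$, the hypothesis $p(\cdot)\in\P_1$ meets the assumptions of that lemma; together with $f_k\to f$ pointwise a.e.\ and $f\in L^{p(\cdot)}$, it gives
\[
\|f\|_{p(\cdot)}\;\le\;\liminf_{k\to\infty}\|f_k\|_{p(\cdot)}\;=\;L.
\]
Combining the two bounds yields $L=\|f\|_{p(\cdot)}$, i.e. $\|f_k\|_{p(\cdot)}\to\|f\|_{p(\cdot)}$.

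There is no genuine obstacle here: both ingredients, namely the monotonicity of the modular (and hence of the Luxemburg norm) under the pointwise relation $0\le g\le h$ and the Fatou inequality of Lemma \ref{lemma:fatou}, are either immediate from the definition of $\rho_{p(\cdot)}$ used in the paper or already supplied. The only minor point to verify carefully is that the $L^\infty(X_\infty)$ term appended in the paper's definition of $\rho_{p(\cdot)}$ respects the same monotonicity; this is clear because $\|g\chi_{X_\infty}\|_\infty\le\|h\chi_{X_\infty}\|_\infty$ whenever $0\le g\le h$ a.e., so the sandwich argument goes through without modification.
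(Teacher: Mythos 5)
The paper itself does not prove this lemma: it is imported verbatim as \cite{red}, Theorem~2.59, with no argument given, so there is no ``paper's own proof'' to compare against. Your proof is nevertheless correct and complete. The monotonicity step is sound: for $0\le g\le h$ a.e.\ one has $\rho_{p(\cdot)}(g/\lambda)\le\rho_{p(\cdot)}(h/\lambda)$ for every $\lambda>0$, hence the set of admissible $\lambda$ for $h$ is contained in that for $g$, so $\|g\|_{p(\cdot)}\le\|h\|_{p(\cdot)}$; and the appeal to Lemma~\ref{lemma:fatou} is legitimate since $\P_1\subseteq\P_0$, $f_k\to f$ a.e., and $f\in L^{p(\cdot)}$ by hypothesis. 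The only point worth remarking is one of dependency order rather than correctness. You derive the monotone convergence theorem (Theorem~2.59 of \cite{red}) from the norm Fatou lemma (Theorem~2.61 of \cite{red}), whereas in the reference the numbering suggests the opposite order: there the proof proceeds directly from the modular, namely by fixing $\lambda<\|f\|_{p(\cdot)}$, noting $\rho_{p(\cdot)}(f/\lambda)>1$, and using the classical monotone convergence theorem to get $\rho_{p(\cdot)}(f_k/\lambda)\to\rho_{p(\cdot)}(f/\lambda)>1$, so that $\|f_k\|_{p(\cdot)}>\lambda$ for large $k$; norm Fatou is then a corollary. Within this paper both statements are cited as black boxes, so no circularity arises from your route; your argument is somewhat cleaner at the cost of invoking a stronger imported ingredient.
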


\begin{lemma}[\cite{red}, Theorem 2.34]\label{lemma:dual}
Let $\Omega \subseteq X$, $\pp \in \P$, then for every $f\in L^{p(\cdot)}$,
\[ \|f\|_{\pp} \approx \sup_{\|g\|_{\cpp} \leq 1}\int_{\Omega} |fg|\,dx, \]
where the implicit constants depend only on $\pp$.
\end{lemma}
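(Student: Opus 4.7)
The plan is to prove the two inequalities separately: the easy direction is the generalized H\"older inequality, and the harder direction is a norming argument based on an explicit extremizer.

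For the upper bound $\int_\Omega |fg|\,dx \lesssim \|f\|_{\pp}\|g\|_{\cpp}$, I will first reduce by homogeneity to the case $\|f\|_{\pp} = \|g\|_{\cpp} = 1$. By the unit-ball property of the Luxemburg norm (since $p(\cdot)\in\P$ implies $p_+ < \infty$ and $(p')_+<\infty$), this gives $\rho_\pp(f) \le 1$ and $\rho_{\cpp}(g)\le 1$. Then Young's pointwise inequality
\[
 |f(x) g(x)| \;\le\; \frac{|f(x)|^{p(x)}}{p(x)} + \frac{|g(x)|^{p'(x)}}{p'(x)},
\]
integrated over $\Omega$, yields $\int_\Omega |fg|\,dx \le \frac{1}{p_-} + \frac{1}{(p')_-}$, which is a constant depending only on $p(\cdot)$. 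This proves one direction.

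For the lower bound, the plan is to exhibit a specific $g$ that nearly attains the supremum. After again normalizing so that $\|f\|_{\pp}=1$ (hence $\rho_\pp(f)=1$ by the unit-ball property), I would set
\[
 g(x) := \sgn(f(x))\,|f(x)|^{p(x)-1}.
\]
A direct calculation using $(p(x)-1)p'(x) = p(x)$ gives $|g(x)|^{p'(x)} = |f(x)|^{p(x)}$, so $\rho_{\cpp}(g) = \rho_{\pp}(f) = 1$, which by the unit-ball property implies $\|g\|_{\cpp}\le 1$. On the other hand,
\[
 \int_\Omega |f(x) g(x)|\,dx \;=\; \int_\Omega |f(x)|^{p(x)}\,dx \;=\; 1 \;=\; \|f\|_{\pp}.
\]
Undoing the normalization gives $\|f\|_{\pp} \le \sup_{\|g\|_{\cpp}\le 1}\int_\Omega |fg|\,dx$, completing the proof with implicit constant $1$ on this side.

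The main subtlety, rather than an obstacle, is the careful use of the unit-ball property of the Luxemburg norm in both directions; this requires $p_+<\infty$ (which is built into $\pp\in\P$) so that $\rho_\pp(f/\|f\|_{\pp})=1$ whenever $f\ne 0$. If one were to weaken the hypothesis to $\pp\in\P_0$ allowing $p_+=\infty$, the explicit choice of $g$ above would need to be modified on the set $\{p(\cdot)=\infty\}$ by a separate $L^1$--$L^\infty$ duality argument; but under the hypothesis $\pp\in\P$ stated here the argument above goes through cleanly and produces implicit constants that depend only on $p_-$ and $(p')_- = p_+/(p_+-1)$, hence only on $\pp$.
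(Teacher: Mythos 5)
Your proof is correct, and since the paper cites this lemma directly from Cruz-Uribe and Fiorenza (Theorem 2.34) without reproducing a proof, the appropriate comparison is with the textbook argument. Your two-sided argument — pointwise Young's inequality integrated against the modular for the upper bound, and the canonical extremizer $g=\sgn(f)|f|^{p(\cdot)-1}$ with the normalization $\rho_{\pp}(f/\|f\|_{\pp})=1$ for the lower bound — is exactly the standard approach, correctly specialized to the case $\pp\in\P$ where $1<p_-\le p_+<\infty$ (so that $X_\infty=\emptyset$, $p'(\cdot)\in\P$ as well, and the unit-ball and modular-equality properties apply without the truncation/$\Omega_\infty$ bookkeeping that the textbook carries to cover $p_+=\infty$). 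The constants you extract, namely $1$ on one side and $1/p_-+1/(p')_-$ on the other, indeed depend only on $\pp$, as the statement requires.
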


\begin{lemma}[\cite{red}, Theorem 2.26]\label{Holder}
Let $p(\cdot) \in \P_1$, then
$$
\int_X|f(x) g(x)| d \mu \leq 4\|f\|_{p(\cdot)}\|g\|_{p^{\prime}(\cdot)}.
$$
\end{lemma}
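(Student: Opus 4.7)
The statement is the variable-exponent H\"older inequality; the plan is the standard ``normalize and apply pointwise Young'' argument, adapted to the modular used here (which carries an extra $L^\infty$ term on $X_\infty:=\{p=\infty\}$). By homogeneity of both sides, I may assume $\|f\|_{p(\cdot)}=\|g\|_{p'(\cdot)}=1$, and the defining property of the Luxemburg norm then gives $\rho_{p(\cdot)}(f)\le 1$ and $\rho_{p'(\cdot)}(g)\le 1$. The target estimate reduces to showing $\int_X|fg|\,d\mu\le 4$.

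The natural next step is to split $X$ according to the value of $p(x)$:
\[
X_1:=\{x:p(x)=1\},\qquad X_\ast:=\{x:1<p(x)<\infty\},\qquad X_\infty:=\{x:p(x)=\infty\},
\]
and treat the three integrals separately. On $X_\ast$, I apply the scalar Young inequality pointwise,
\[
|f(x)g(x)|\le \frac{|f(x)|^{p(x)}}{p(x)}+\frac{|g(x)|^{p'(x)}}{p'(x)}\le |f(x)|^{p(x)}+|g(x)|^{p'(x)},
\]
and integrate to obtain the bound $\rho_{p(\cdot)}(f\chi_{X_\ast})+\rho_{p'(\cdot)}(g\chi_{X_\ast})\le 2$. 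On $X_\infty$ the exponent $p'$ equals $1$, so I estimate
\[
\int_{X_\infty}|fg|\,d\mu\le \|f\|_{L^\infty(X_\infty)}\int_{X_\infty}|g|^{p'(x)}\,d\mu\le\rho_{p(\cdot)}(f)\cdot\rho_{p'(\cdot)}(g)\le 1.
\]
On $X_1$ the roles of $p$ and $p'$ are reversed ($p'=\infty$), and a symmetric argument gives $\int_{X_1}|fg|\,d\mu\le 1$. Summing the three contributions yields $\int_X|fg|\,d\mu\le 2+1+1=4$, which is exactly the claim.

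There is essentially no obstacle: the only subtlety is bookkeeping the ``mixed'' modular, namely remembering that the $L^\infty$ piece of $\rho_{p(\cdot)}$ lives precisely on the set where $p'$ is finite (and vice versa), so the $L^\infty$-factor on $X_\infty$ is absorbed by the $\|f\|_{L^\infty(X_\infty)}$-term already present in $\rho_{p(\cdot)}(f)$, and likewise on $X_1$. Once this splitting is in place, the three estimates are immediate from pointwise Young on $X_\ast$ and from the trivial $L^\infty$--$L^1$ H\"older on $X_1\cup X_\infty$, and combining them produces the advertised constant $4$. No homogeneous-space structure or doubling is used; the argument is purely measure-theoretic and identical to the Euclidean case treated in \cite{red}.
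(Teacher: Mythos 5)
The paper does not give its own proof of this inequality; it is imported verbatim from \cite{red}, Theorem 2.26. Your argument is correct and is precisely the standard proof from that source: normalize so the two Luxemburg norms equal $1$ (hence $\rho_{p(\cdot)}(f)\le 1$ and $\rho_{p'(\cdot)}(g)\le 1$, using left-continuity of the modular), split $X$ into $X_1$, $X_\ast$, $X_\infty$, apply pointwise Young on $X_\ast$ to get a contribution of at most $2$, and use the $L^\infty$--$L^1$ pairing on $X_1$ and $X_\infty$ to get at most $1$ each, yielding the constant $4$. One small remark: under the paper's hypothesis $p(\cdot)\in\P_1$ one has $p_+<\infty$, so $X_\infty=\varnothing$ and your argument in fact gives the constant $3$; the weaker bound $4$ is what the book states because it also covers the $p_+=\infty$ case.
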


\begin{lemma}[\cite{red}, Corollary 2.28.]\label{Gu6}
	Let $p( \cdot ), p_i( \cdot ) $ belong to $ {\P}_0$, $ i = 1, \ldots ,m$, and $\frac{1}{{p(\cdot)}} = \sum\limits_{i = 1}^m {\frac{1}{{{p_i}(\cdot)}}}$. Then
	\begin{equation*}
		{\left\| {{f_1} \cdots {f_m}} \right\|_{{L^{p( \cdot )}(X)}}} \lesssim \prod\limits_{i = 1}^m {{{\left\| {{f_i}} \right\|}_{{L^{{p_i}( \cdot )}(X)}}}}.
	\end{equation*}
\end{lemma}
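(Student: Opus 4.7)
The plan is to prove the generalized H\"older inequality by induction on $m$, reducing everything to the standard two-exponent version of H\"older's inequality in variable Lebesgue spaces, which is itself a consequence of the pointwise Young-type inequality with variable exponents. Although the statement is phrased in $\P_0$ (so we must allow $p_{-}=0$), the inequality is homogeneous in each $f_i$, so after normalizing we can work with the modular $\rho_{p(\cdot)}$ instead of the Luxemburg norm.

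For the base case $m=2$, with $\tfrac{1}{p(\cdot)}=\tfrac{1}{p_1(\cdot)}+\tfrac{1}{p_2(\cdot)}$, I would set $r_i(\cdot)=p_i(\cdot)/p(\cdot)$ so that $\tfrac{1}{r_1(\cdot)}+\tfrac{1}{r_2(\cdot)}=1$ pointwise. After normalizing so that $\|f_i\|_{p_i(\cdot)}\le 1$ for $i=1,2$, the classical pointwise Young inequality gives
\begin{equation*}
|f_1(x) f_2(x)|^{p(x)} \le \frac{|f_1(x)|^{p_1(x)}}{r_1(x)}+\frac{|f_2(x)|^{p_2(x)}}{r_2(x)} \le |f_1(x)|^{p_1(x)}+|f_2(x)|^{p_2(x)}.
\end{equation*}
Integrating over $X$ and using the definition of the modular shows $\rho_{p(\cdot)}(f_1 f_2)\le 2$, which by the unit-ball property of the Luxemburg norm yields $\|f_1 f_2\|_{p(\cdot)}\lesssim 1$. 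Undoing the normalization gives the case $m=2$. (When $p_{-}=0$ one must take a little care at points where $|f_i|>1$ versus $|f_i|\le 1$, but the standard splitting argument handles this with a harmless multiplicative constant.)

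For the inductive step, assume the inequality holds for $m-1$ factors. Define an auxiliary exponent $q(\cdot)$ by $\tfrac{1}{q(\cdot)}=\sum_{i=1}^{m-1}\tfrac{1}{p_i(\cdot)}$, so that $\tfrac{1}{p(\cdot)}=\tfrac{1}{q(\cdot)}+\tfrac{1}{p_m(\cdot)}$. Applying the two-exponent case to the pair $(f_1\cdots f_{m-1}, f_m)$ yields
\begin{equation*}
\|f_1\cdots f_m\|_{p(\cdot)} \lesssim \|f_1\cdots f_{m-1}\|_{q(\cdot)}\,\|f_m\|_{p_m(\cdot)},
\end{equation*}
and the induction hypothesis controls $\|f_1\cdots f_{m-1}\|_{q(\cdot)}$ by $\prod_{i=1}^{m-1}\|f_i\|_{p_i(\cdot)}$.

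The main obstacle is really bookkeeping rather than mathematical depth: tracking how the implicit constants depend on $m$ (they accumulate geometrically through the induction, which is acceptable since the statement only claims $\lesssim$), and verifying that the Young inequality argument survives in the $\P_0$ regime where $p_i(\cdot)$ may tend to zero. Since this lemma is quoted from \cite{red}, my proposal would essentially mirror the proof of Corollary 2.28 there, observing that the only property of $\mathbb{R}^n$ used is the existence of the integral on $(X,\mu)$, so the argument transfers verbatim to spaces of homogeneous type.
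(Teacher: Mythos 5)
Your proof is correct, and it reproduces the standard argument: reduce to $m=2$ via the pointwise Young inequality with the conjugate pair $r_i(\cdot)=p_i(\cdot)/p(\cdot)\ge 1$, pass from the modular bound $\rho_{p(\cdot)}(f_1f_2)\le 2$ to a norm bound using the $p_-$-homogeneity of the modular (which requires only $p_->0$, so the $\P_0$ hypothesis is exactly what is needed), then induct. The paper itself does not give a proof of this lemma but simply cites Corollary 2.28 of \cite{red}; your argument is essentially that reference's proof, and you are right that it is purely measure-theoretic and carries over to $(X,d,\mu)$ unchanged. One small point: the parenthetical worry about $p_-=0$ is moot, since $\P_0$ already forbids $p_-=0$; the only care needed in the $\P_0$ regime is that the Luxemburg functional is a quasi-norm and the passage from modular to norm picks up a constant of the form $2^{1/p_-}$, which your argument implicitly handles.
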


\begin{lemma}[\cite{Cruz2022}, Lemma 2.1]\label{LMB.}
    For all $0 < r < R$ and any $y \in B(x, R)$, there exists a positive constant $C=C_{X}$, such that
    $$
    \frac{\mu(B(y, r))}{\mu(B(x, R))} \geq C\left(\frac{r}{R}\right)^{\log _2 C_\mu}.
    $$
    \end{lemma}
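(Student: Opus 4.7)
The plan is to prove the lemma by two standard ingredients of analysis on spaces of homogeneous type: the quasi-triangle inequality, to enlarge $B(x,R)$ to a ball centered at $y$, and iterated doubling, to shrink that enlarged ball down to $B(y,r)$.

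First I would use the quasi-triangle inequality to observe that for any $z \in B(x,R)$ and any $y \in B(x,R)$,
\begin{equation*}
d(y,z) \le A_0\bigl(d(y,x)+d(x,z)\bigr) < A_0(R+R)=2A_0 R,
\end{equation*}
so that $B(x,R)\subseteq B(y,2A_0R)$ and hence $\mu(B(x,R))\le \mu(B(y,2A_0R))$. This reduces the problem to comparing $\mu(B(y,r))$ with $\mu(B(y,2A_0R))$, i.e.\ two concentric balls.

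Second, I would iterate the doubling condition on balls centered at $y$. Choose the smallest positive integer $k$ with $2^k r \ge 2A_0 R$; equivalently $k=\lceil \log_2(2A_0R/r)\rceil$, so in particular $2^k \le 4A_0 R/r$. By $k$-fold application of the doubling inequality $\mu(B(y,2s))\le C_\mu \mu(B(y,s))$,
\begin{equation*}
\mu(B(y,2A_0R))\le \mu(B(y,2^k r))\le C_\mu^{\,k}\,\mu(B(y,r)).
\end{equation*}
Combining with the first step gives $\mu(B(x,R))\le C_\mu^{\,k}\,\mu(B(y,r))$, so
\begin{equation*}
\frac{\mu(B(y,r))}{\mu(B(x,R))}\ge C_\mu^{-k}
= 2^{-k\log_2 C_\mu}
\ge (4A_0 R/r)^{-\log_2 C_\mu}
= (4A_0)^{-\log_2 C_\mu}\Bigl(\frac{r}{R}\Bigr)^{\log_2 C_\mu}.
\end{equation*}
Thus the constant $C=C_X:=(4A_0)^{-\log_2 C_\mu}$, which depends only on the quasi-metric constant $A_0$ and the doubling constant $C_\mu$ (i.e.\ on $X$), works.

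There is no real obstacle here; this is a routine computation. The only minor subtlety is bookkeeping of the constant $A_0$ from the quasi-triangle inequality and ensuring $k$ is chosen large enough that $B(y,2A_0R)\subseteq B(y,2^k r)$, while small enough that $2^k$ is still controlled by a constant multiple of $R/r$, which is what produces the exponent $\log_2 C_\mu$.
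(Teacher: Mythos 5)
Your proof is correct and is the standard argument for the lower mass bound on spaces of homogeneous type; the paper merely cites this lemma from \cite{Cruz2022} without reproducing a proof, and your route (quasi-triangle inequality to recenter the ball at $y$, then iterated doubling with $k=\lceil\log_2(2A_0R/r)\rceil$ applications) is exactly the one found there. The bookkeeping is sound: $r<R$ and $A_0\ge 1$ guarantee $k\ge 1$, the minimality of $k$ gives $2^k\le 4A_0R/r$, and $\log_2 C_\mu\ge 0$ lets you pass from $C_\mu^{-k}=2^{-k\log_2 C_\mu}$ to the stated lower bound with $C_X=(4A_0)^{-\log_2 C_\mu}$, depending only on the quasi-metric and doubling constants of $X$.
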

\begin{lemma}[\cite{Bra1996}, Lemma 1.9]\label{finite-bounded}
    $\mu(X) < \infty$ if and only if $X$ is bounded, which means there exist a ball $B \subseteq X$, such that $X=B$.
\end{lemma}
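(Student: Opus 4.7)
The plan is to separate the two directions, with almost all the substance in the backward implication.

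\emph{Forward direction.} If $X=B$ for some ball $B$, then the doubling hypothesis on $\mu$ built into its definition forces $\mu(B)<\infty$, so $\mu(X)=\mu(B)<\infty$ immediately.

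\emph{Backward direction.} I would argue the contrapositive: if $X$ is not a single ball, then $\mu(X)=\infty$. Fix any $x_0\in X$ and note that $B(x_0,R)\subseteq X$ always, so the identity $X=B(x_0,R)$ is equivalent to $X\subseteq B(x_0,R)$. Consequently the assumption that $X$ cannot be written as a ball forces $\sup_{y\in X}d(x_0,y)=\infty$, and I may inductively select points $y_k\in X$ with $d(x_0,y_1)\ge 1$ and $d(x_0,y_{k+1})\ge 2A_0\,d(x_0,y_k)$. Set $r_k:=d(x_0,y_k)/(100A_0^2)$.

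Next I would verify that $\{B(y_k,r_k)\}_k$ is a pairwise disjoint family in $X$. For $k<l$ the inductive gap yields $d(x_0,y_k)\le d(x_0,y_l)/(2A_0)$, so the quasi-triangle inequality gives on the one hand
\[
d(y_k,y_l)\ge d(x_0,y_l)/A_0-d(x_0,y_k)\ge d(x_0,y_l)/(2A_0),
\]
while, on the other hand, any common point $z\in B(y_k,r_k)\cap B(y_l,r_l)$ would force $d(y_k,y_l)\le A_0(r_k+r_l)\le d(x_0,y_l)/(50A_0)$, which is incompatible. Applying Lemma~\ref{LMB.} with $x=x_0$, $R=2d(x_0,y_k)$ (so $y_k\in B(x_0,R)$), and $r=r_k$ then produces
\[
\mu(B(y_k,r_k))\ge C\,(r_k/R)^{\log_2 C_\mu}\,\mu(B(x_0,2d(x_0,y_k)))\ge c\,\mu(B(x_0,2d(x_0,y_1)))>0
\]
uniformly in $k$, using the monotonicity of $r\mapsto\mu(B(x_0,r))$ and the positivity of ball measures built into the doubling condition. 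Disjointness now gives $\mu(X)\ge\sum_k\mu(B(y_k,r_k))=+\infty$, contradicting $\mu(X)<\infty$.

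The main obstacle is the disjointness calculation: because the ambient space is only quasi-metric, the constant $A_0$ propagates through every triangle inequality, and the gap factor $2A_0$ in the construction of $\{y_k\}$ must be paired with a sufficiently small shrinkage constant (here $1/(100A_0^2)$) in $r_k$ so that the two bounds on $d(y_k,y_l)$ are strictly incompatible. Once the disjoint family is in place, the uniform measure lower bound supplied by Lemma~\ref{LMB.}, summation over $k$, and the elementary forward direction complete the argument.
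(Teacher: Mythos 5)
The paper does not include a proof of this lemma; it is cited from Bramanti--Cerutti \cite{Bra1996} and used as a black box. Your argument is a correct, self-contained reconstruction along the expected lines: the forward direction is immediate from the finiteness of ball measures built into the doubling hypothesis, and the backward direction (via contrapositive) runs the standard ``escaping disjoint balls'' argument. The quantitative choices check out: with $d(x_0,y_{k+1})\ge 2A_0\,d(x_0,y_k)$ and $r_k=d(x_0,y_k)/(100A_0^2)$ one indeed has, for $k<l$, a lower bound $d(y_k,y_l)\ge d(x_0,y_l)/(2A_0)$ and an upper bound $A_0(r_k+r_l)\le d(x_0,y_l)/(50A_0)$ forced by any common point, which are incompatible, so the balls are pairwise disjoint; and the lower mass bound (Lemma~\ref{LMB.}) with $R=2d(x_0,y_k)$ gives the ratio $r_k/R=1/(200A_0^2)$, a constant independent of $k$, so $\mu(B(y_k,r_k))\gtrsim\mu(B(x_0,2d(x_0,y_1)))>0$ uniformly, and summing over $k$ gives $\mu(X)=\infty$. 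The proof is complete and correct.
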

    
\begin{lemma}[\cite{Cruz2022}, Lemma 2.11]\label{lemma:diening}
    Let $p(\cdot) \in L H$, then 
    $
    \mathop {\sup }\limits_{B \subseteq X} \mu {(B)^{{p_ - }(B) - {p_ + }(B)}} \lesssim 1.
    $
The implict constant depends only on $p(\cdot)$ and $n$. The same inequality holds if we replace one of $p_+(Q)$ or $p_-(Q)$ by $p(x)$ for any $x \in Q$.
\end{lemma}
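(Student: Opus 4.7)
The plan is to reduce to the case $\mu(B)<1$ (the only nontrivial one) and then separately bound $p_+(B)-p_-(B)$ via the log-H\"older conditions and $\log(1/\mu(B))$ via iterated doubling through Lemma~\ref{LMB.}. Since $p_-(B)\le p_+(B)$, the exponent is nonpositive, so any ball with $\mu(B)\ge 1$ is handled trivially. It will therefore suffice to establish
\[
\bigl(p_+(B)-p_-(B)\bigr)\,\log(1/\mu(B))\lesssim 1 \qquad\text{whenever }\mu(B)<1.
\]

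Writing $B=B(y_0,r)$, I would split into two geometric regimes. \emph{Case A: $d(y_0,x_0)\le 2A_0 r$.} Here the quasi-triangle inequality gives $B\subseteq B(x_0,Cr)$ for some $C=C(A_0)$, and Lemma~\ref{LMB.} (applied with $y_0$ viewed as a point of $B(x_0,Cr)$), combined with an iterated application of doubling down to the fixed reference ball $B(x_0,1)$, will yield $\mu(B)\gtrsim r^{\log_2 C_\mu}$, whence $\log(1/\mu(B))\lesssim \log(1/r)+O(1)$. Simultaneously, since $d(x,z)\le 2A_0 r$ for all $x,z\in B$, once $2A_0 r<\tfrac12$ the hypothesis $p(\cdot)\in LH_0$ (condition~\eqref{LH0}) delivers $p_+(B)-p_-(B)\lesssim 1/\log(e+1/r)$; the product is then $\lesssim 1$. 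For $r$ not small, $\mu(B)$ is bounded below by an absolute constant while $p_+(B)-p_-(B)$ is dominated by the global bound $p_+-1$, so the inequality is again immediate.

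\emph{Case B: $d(y_0,x_0)>2A_0 r$.} For every $x\in B$ one has $d(x,x_0)\gtrsim d(y_0,x_0)$, so two applications of $LH_\infty$ (condition~\eqref{LHinfty}) through $p_\infty$ will give $p_+(B)-p_-(B)\lesssim 1/\log(e+d(y_0,x_0))$. To handle $\log(1/\mu(B))$ I would choose the smallest integer $k$ with $2^k r\ge 2A_0(1+d(y_0,x_0))$, ensuring $B(x_0,1)\subseteq B(y_0,2^k r)$, and then iterate doubling to obtain $\mu(B)\gtrsim C_\mu^{-k}\mu(B(x_0,1))$; this yields $\log(1/\mu(B))\lesssim k\lesssim \log(1+d(y_0,x_0))+\log(1/r)$. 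When $r$ is additionally small, $LH_0$ contributes the refinement $p_+(B)-p_-(B)\lesssim 1/\log(e+1/r)$, and taking the better of the two $LH$ estimates gives
\[
p_+(B)-p_-(B)\lesssim \frac{1}{\max\bigl(\log(e+1/r),\,\log(e+d(y_0,x_0))\bigr)},
\]
so the product with $\log(1/\mu(B))$ is $\lesssim 1$.

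The final clause of the lemma will follow immediately from $p_-(Q)\le p(x)\le p_+(Q)$ a.e.\ on $Q$: replacing one of the endpoints by an intermediate $p(x)$ only decreases $|p_+(Q)-p_-(Q)|$, so the same bound persists. The hard part will be the careful synthesis in Case B: iterated doubling allows $\log(1/\mu(B))$ to absorb the full factor $\log(1+d(y_0,x_0))$, and it is precisely the $LH_\infty$ decay of $p(\cdot)-p_\infty$ that cancels this term; this explains why the full $LH$ hypothesis (rather than $LH_0$ alone, as in the Euclidean proof) is genuinely needed in the homogeneous-type setting.
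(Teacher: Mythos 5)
The paper cites this lemma directly from \cite{Cruz2022} (their Lemma 2.11) without reproducing the proof, so there is no internal argument to compare against; I comment only on correctness. Your argument is sound and follows the standard route: the reduction to $\mu(B)<1$ is immediate since the exponent is nonpositive, and the two geometric cases are handled properly. In Case~A the $LH_0$ condition controls $p_+(B)-p_-(B)\lesssim 1/\log(e+1/r)$ while Lemma~\ref{LMB.} against the fixed reference ball $B(x_0,1)$ gives $\log(1/\mu(B))\lesssim\log(1/r)+O(1)$; in Case~B you correctly obtain, via $LH_\infty$ and (for small $r$) $LH_0$, the bound $p_+(B)-p_-(B)\lesssim 1/\max\bigl(\log(e+1/r),\log(e+d(y_0,x_0))\bigr)$, while iterated doubling yields $\log(1/\mu(B))\lesssim\log(1+d(y_0,x_0))+\log(1/r)+O(1)$, and the elementary inequality $\tfrac{a+b}{\max(a,b)}\le 2$ closes the argument. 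The final clause is handled correctly: replacing an endpoint by $p(x)$ only moves the exponent toward $0$. Your closing remark is also accurate: in a general doubling space (unlike the Ahlfors-regular/Euclidean setting where $\mu(B)\approx r^n$) a ball of moderate radius far from $x_0$ can still have very small measure, so the $\log(1+d(y_0,x_0))$ contribution to $\log(1/\mu(B))$ must be absorbed by $LH_\infty$, which is why the full $LH$ hypothesis is genuinely needed.
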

  
    
\begin{lemma}[\cite{red}, Proposition 2.21]\label{unity}
Let $p(\cdot) \in \P_1$, then
$$
\int_X\left(\frac{|f(x)|}{\|f\|_{p(\cdot)}}\right)^{p(x)} d \mu=1 .
$$
\end{lemma}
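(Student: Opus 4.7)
Let me propose a proof for the modular identity $\int_X(|f(x)|/\|f\|_{p(\cdot)})^{p(x)}\,d\mu = 1$, assuming (as is implicit) that $0 < \|f\|_{p(\cdot)} < \infty$, and writing $\lambda_0 := \|f\|_{p(\cdot)}$ and $\rho(\cdot) := \rho_{p(\cdot)}(\cdot)$. Since $p(\cdot)\in\mathscr{P}_1$ gives $p_+<\infty$, the set $X_\infty$ where $p=\infty$ is empty, so $\rho(g)=\int_X |g(x)|^{p(x)}\,d\mu$ and the $L^\infty$ contribution drops out. The key structural fact is that $\lambda \mapsto \rho(f/\lambda)$ is nonincreasing on $(0,\infty)$, so the set $A:=\{\lambda>0:\rho(f/\lambda)\le 1\}$ is an interval with left endpoint $\lambda_0$; the task is to verify $\lambda_0\in A$ and that equality holds at $\lambda_0$.

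The plan is to split the proof into two monotone-limit arguments. For the upper bound $\rho(f/\lambda_0)\le 1$, I would choose $\lambda_n\in A$ with $\lambda_n\searrow\lambda_0$, so that the pointwise monotone convergence
\[
\Bigl(\frac{|f(x)|}{\lambda_n}\Bigr)^{p(x)}\nearrow \Bigl(\frac{|f(x)|}{\lambda_0}\Bigr)^{p(x)}
\]
holds (because $|f|/\lambda_n$ increases and $p(x)\ge 1>0$). Monotone convergence then yields $\rho(f/\lambda_0)=\lim_n \rho(f/\lambda_n)\le 1$. In particular, $\rho(f/\lambda_0)$ is finite, which is needed for the next step.

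For the reverse inequality $\rho(f/\lambda_0)\ge 1$, I would argue by contradiction: assume $\rho(f/\lambda_0)<1$ and pick any $\lambda_n\nearrow\lambda_0$ with $\lambda_1>0$. Now $(|f|/\lambda_n)^{p(x)}\searrow (|f|/\lambda_0)^{p(x)}$ pointwise, and this is where $p_+<\infty$ does the essential work: for each $n$ and each $x$,
\[
\Bigl(\frac{|f(x)|}{\lambda_n}\Bigr)^{p(x)} \le \Bigl(\frac{\lambda_0}{\lambda_1}\Bigr)^{p_+}\Bigl(\frac{|f(x)|}{\lambda_0}\Bigr)^{p(x)},
\]
which is an integrable majorant because $\rho(f/\lambda_0)\le 1<\infty$ from Step 1. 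Dominated convergence then gives $\lim_n\rho(f/\lambda_n)=\rho(f/\lambda_0)<1$, so $\rho(f/\lambda_n)\le 1$ for all large $n$. But $\lambda_n<\lambda_0$ then lies in $A$, contradicting $\lambda_0=\inf A$. Hence $\rho(f/\lambda_0)=1$, as required.

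The main obstacle is Step 2: monotone convergence is not available for a decreasing sequence of nonnegative functions, so one must produce an integrable dominating function before invoking dominated convergence. This is precisely where the hypothesis $p_+<\infty$ (built into $\mathscr{P}_1$) is indispensable; without it, the scaling factor $(\lambda_0/\lambda_1)^{p(x)}$ cannot be controlled uniformly in $x$, and the identity can genuinely fail on $X_\infty$ if an $L^\infty$ component is present. Everything else reduces to the monotonicity of the modular in the scaling parameter and standard convergence theorems.
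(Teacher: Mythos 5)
Your proof is correct, and since the paper simply cites Lemma~\ref{unity} from \cite{red} without reproducing the argument, there is no in-paper proof to compare against; your argument is essentially the standard one found in the cited reference (continuity of $\lambda\mapsto\rho(f/\lambda)$ via monotone convergence from above and dominated convergence from below, with $p_+<\infty$ supplying the dominating function $(\lambda_0/\lambda_1)^{p_+}(|f|/\lambda_0)^{p(\cdot)}$). The decomposition into the two one-sided estimates, the observation that $A$ is a ray so that approach from above is always available, and the explicit identification of where $p_+<\infty$ is indispensable are all correct and clearly laid out. The only stylistic quibble is the phrase ``interval with left endpoint $\lambda_0$'': whether $\lambda_0\in A$ is exactly what the argument must establish, so it would be cleaner to say $A\supseteq(\lambda_0,\infty)$ and leave membership of $\lambda_0$ to the conclusion; the subsequent reasoning never actually uses the stronger claim, so the proof stands.
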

\begin{lemma}[\cite{red}, Corollary 2.23] \label{p.omega}
Let $\Omega \subseteq X$ and $p(\cdot) \in \P_1 (\Omega)$. 
     
 If $\|f\|_{L^{p(\cdot)}(\Omega)} \leq 1$, then
    $$
    \|f\|_{p(\cdot)}^{p_{+}(\Omega)} \leq \int_\Omega|f(x)|^{p(x)} d \mu \leq\|f\|_{p(\cdot)}^{p_{-}(\Omega)} .
    $$
     
    If $\|f\|_{L^{p(\cdot)}(\Omega)} \geq 1$, then
    $$
    \|f\|_{p(\cdot)}^{p_{-}(\Omega)} \leq \int_\Omega|f(x)|^{p(x)} d \mu \leq\|f\|_{p(\cdot)}^{p_{+}(\Omega)} .
    $$
    Moverover, we have $\|f\|_{p(\cdot)} \leq $ if and only if
    $\int_\Omega|f(x)|^{p(x)} d \mu \leq C_2$.
    When either $=1$ or $C_2=1$, the other constant is also to be 1.
\end{lemma}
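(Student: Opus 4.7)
The plan is to derive both pairs of inequalities directly from the modular identity of Lemma~\ref{unity}, which asserts $\int_\Omega \bigl(|f(x)|/\|f\|_{p(\cdot)}\bigr)^{p(x)}\, d\mu = 1$ whenever $\|f\|_{p(\cdot)} > 0$ (the case $f \equiv 0$ being trivial). Setting $\lambda := \|f\|_{L^{p(\cdot)}(\Omega)}$, the whole argument reduces to the elementary fact that the exponential map $t \mapsto \lambda^t$ is monotone on $[p_-(\Omega), p_+(\Omega)]$, with the direction of monotonicity determined by whether $\lambda \le 1$ or $\lambda \ge 1$.

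For the first assertion, assume $\lambda \le 1$. Then for each $x \in \Omega$ one has $\lambda^{p_+(\Omega)} \le \lambda^{p(x)} \le \lambda^{p_-(\Omega)}$. Dividing $|f(x)|^{p(x)}$ by each of these three quantities produces the pointwise chain
\[
\frac{|f(x)|^{p(x)}}{\lambda^{p_-(\Omega)}} \le \left(\frac{|f(x)|}{\lambda}\right)^{p(x)} \le \frac{|f(x)|^{p(x)}}{\lambda^{p_+(\Omega)}}.
\]
Integrating over $\Omega$ and invoking the unity identity yields $\lambda^{p_+(\Omega)} \le \int_\Omega |f(x)|^{p(x)}\, d\mu \le \lambda^{p_-(\Omega)}$, as required. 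The case $\lambda \ge 1$ is entirely parallel; the only change is that the monotonicity of $t \mapsto \lambda^t$ reverses, which simply swaps the roles of $p_-(\Omega)$ and $p_+(\Omega)$ in the resulting bounds.

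The ``moreover'' claim follows by homogeneity. Because $\|f\|_{p(\cdot)} \le C_1$ is equivalent to $\|f/C_1\|_{p(\cdot)} \le 1$, one may combine the Luxemburg definition with the inequalities just established to convert a norm bound into a modular bound and vice versa, with an explicit $C_2 = C_2(C_1, p_-, p_+)$ depending only on the two extremes of $p(\cdot)$. The borderline case in which either constant equals $1$ collapses to the equivalence $\|f\|_{p(\cdot)} \le 1 \Leftrightarrow \int_\Omega |f(x)|^{p(x)}\, d\mu \le 1$, which is built directly into the definition of the Luxemburg norm.

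The only real obstacle is bookkeeping: one must take care that Lemma~\ref{unity} is legitimately applied on the subset $\Omega$ rather than on the whole space $X$, which is justified because $p(\cdot) \in \mathscr{P}_1(\Omega)$ forces $p_+(\Omega) < \infty$ and thereby rules out any $L^\infty$ contribution to the norm on $\Omega$. Once the modular identity is secured on $\Omega$, the remainder is a two-line manipulation and no deeper tool is needed.
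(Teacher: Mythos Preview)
Your proof is correct and is precisely the standard argument; the paper itself does not prove this lemma but merely cites it from \cite{red}, Corollary~2.23, so there is no in-paper proof to compare against. Your derivation from Lemma~\ref{unity} via the monotonicity of $t\mapsto\lambda^t$ is exactly how the result is established in that reference.
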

    

    
\begin{lemma}[\cite{Cruz2012}] \label{lemma:infty-bound}
Let $\pp \in LH \cap \P_1$, if $\omega\in A_\pp(X)$, then 
	there exists a constant $t>1$, depending only on $\omega$ and $\pp$,
	such that
	\[ \int_X \frac{\omega(x)^{p(x)}}{\left(e+d\left(x_0, x\right)\right)^{t p_{-}}} d \mu \leq 1 \]
\end{lemma}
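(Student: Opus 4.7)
The plan is to decompose $X$ into dyadic annuli around $x_0$ and use the doubling property of $\omega^{p(\cdot)}$ (which follows from $\omega\in A_\pp(X)$) to estimate the integral by a geometric series that can be made arbitrarily small by enlarging $t$.

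\textbf{Step 1 (reduction to an $A_\infty$ weight).} First I would observe that Lemma \ref{Apq_Ainfty}, applied with $q(\cdot)=p(\cdot)$ (so that $\eta=0$ and $A_{p(\cdot),q(\cdot)}(X)=A_\pp(X)$), yields $u(\cdot):=\omega(\cdot)^{p(\cdot)}\in A_\infty(X)$. In particular, $u$ is a doubling measure on $X$ with some doubling constant $C_u\ge 1$ depending only on $\omega$ and $p(\cdot)$; iterating gives
\[
u\bigl(B(x_0,2^k)\bigr)\le C_u^{\,k}\,u\bigl(B(x_0,1)\bigr)\qquad(k\ge 0).
\]

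\textbf{Step 2 (annular decomposition).} Set $B_0:=B(x_0,1)$ and, for $k\ge 1$, let $R_k:=B(x_0,2^k)\setminus B(x_0,2^{k-1})$, so that $X=B_0\cup\bigcup_{k\ge 1}R_k$. On $B_0$ one has $(e+d(x_0,x))^{-tp_-}\le e^{-tp_-}$, while for $x\in R_k$ ($k\ge 1$),
\[
(e+d(x_0,x))^{-tp_-}\le 2^{-(k-1)tp_-}.
\]
Combining with the doubling estimate from Step 1 gives
\begin{align*}
\int_X \frac{\omega(x)^{p(x)}}{(e+d(x_0,x))^{tp_-}}\,d\mu
&\le e^{-tp_-}u(B_0)+\sum_{k\ge 1}2^{-(k-1)tp_-}u(B(x_0,2^k))\\
&\le u(B_0)\left(e^{-tp_-}+2^{tp_-}\sum_{k\ge 1}\bigl(C_u\cdot 2^{-tp_-}\bigr)^{k}\right).
\end{align*}

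\textbf{Step 3 (choice of $t$).} Choosing $t$ so large that $C_u\cdot 2^{-tp_-}<1/2$, the geometric sum converges, and in fact the entire right-hand side tends to $0$ as $t\to\infty$ (both $e^{-tp_-}$ and the geometric tail do). Hence there exists $t=t(\omega,p(\cdot))>1$, depending only on the $A_\infty$ data of $u$ and on $p_-$, such that the total expression is $\le 1$. This is the desired inequality.

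\textbf{Main obstacle.} The only nontrivial input is the passage from $\omega\in A_\pp(X)$ to the doubling of $u=\omega^{p(\cdot)}$, which is exactly what Lemma \ref{Apq_Ainfty} (in the case $\eta=0$) provides; once that is in hand, the annular/geometric-series argument is essentially a quantitative version of dominated convergence and can be carried out cleanly on $(X,d,\mu)$ by invoking the doubling of $\mu$ only implicitly through the doubling of $u$.
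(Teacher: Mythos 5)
Your approach via annular decomposition and doubling of $u=\omega^{p(\cdot)}$ is a reasonable route, and Step~1 is fine: $\omega\in A_{p(\cdot)}(X)$ gives $u\in A_\infty(X)$ by Lemma~\ref{Apq_Ainfty} (with $\eta=0$), hence $u$ is doubling. However, Step~3 contains a genuine error. With the bound you wrote in Step~2, the right-hand side does \emph{not} tend to $0$ as $t\to\infty$. Indeed,
\[
2^{tp_-}\sum_{k\ge 1}\bigl(C_u\cdot 2^{-tp_-}\bigr)^{k}
=\frac{C_u}{1-C_u\,2^{-tp_-}}
\;\longrightarrow\; C_u \quad (t\to\infty),
\]
so the total bound tends to $u(B_0)\,C_u$, which has no reason to be $\le 1$. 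The source of the loss is that on $R_1$ (and more generally on the first few annuli) your inequality $(e+d(x_0,x))^{-tp_-}\le 2^{-(k-1)tp_-}$ degenerates to $\le 1$ at $k=1$, and the factor $2^{tp_-}$ you pull out exactly cancels the $t$-decay of the first term of the series. In effect you have thrown away the additive $e$ in $e+d(x_0,x)$, which is precisely what should produce the overall decay in $t$.

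The argument is salvageable: on $R_k$ one has $e+d(x_0,x)\ge e+2^{k-1}$, and by AM--GM $(e+2^{k-1})^{tp_-}\ge e^{tp_-/2}\,2^{(k-1)tp_-/2}$. This inserts an extra $e^{-tp_-/2}$ prefactor in front of the (still convergent, for $t$ large) geometric sum, so the whole bound
\[
u(B_0)\Bigl(e^{-tp_-}+\tfrac{e^{-tp_-/2}\,C_u}{1-C_u\,2^{-tp_-/2}}\Bigr)
\]
does tend to $0$ as $t\to\infty$, and the choice of $t$ then depends only on $C_u$ (hence on $\omega,p(\cdot)$) and $p_-$, as required. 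You should also address briefly the case $\mu(X)<\infty$ (then $X$ is bounded by Lemma~\ref{finite-bounded} and the sum is finite), but that is a minor point compared to the error in Step~3.
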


\begin{lemma}[\cite{Cruz2022}, Lemma 2.10]\label{lemma:p-infty-px}
    For any point $y\in G$, $G$ is a subset of $X$, and two exponents $p_1(\cdot)$ and $p_2(\cdot)$, there exists a constant $C_0>0$ such that
    $$
    |p_1(y)-p_2(y)| \leq \frac{C_0}{\log \left(e+d\left(x_0, y\right)\right)}.
    $$
    Then there exists a constant $C=C_{t,C_0}$ such that 
    \begin{equation}\label{2Log_1}
    \int_G|f(y)|^{p_1(y)} u(y) d \mu \leq C \int_G|f(y)|^{p_2(y)} u(y) d \mu+\int_G \frac{1}{\left(e+d\left(x_0, y\right)\right)^{t s_{-}(G)}} u(y) d \mu,
    \end{equation}
    for all functions $f$ with $|f(y)| \leq 1$ and every $t\geq 1$.
    \end{lemma}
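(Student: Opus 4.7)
The plan is to split the domain $G$ by whether $|f(y)|$ is large or small relative to a $y$-dependent decay scale, and then exploit the log-H\"{o}lder-type gap $|p_1(y)-p_2(y)| \le C_0/\log(e+d(x_0,y))$ only on the part where $|f|$ is not negligible. Concretely, I would set
\[
G_1 := \bigl\{\, y \in G : |f(y)| \ge (e+d(x_0,y))^{-t}\,\bigr\}, \qquad G_2 := G \setminus G_1,
\]
write $|f(y)|^{p_1(y)} = |f(y)|^{p_2(y)}\,|f(y)|^{p_1(y)-p_2(y)}$ on $G_1$, and bound $|f(y)|^{p_1(y)}$ by the pointwise decay $(e+d(x_0,y))^{-ts_-(G)}$ on $G_2$.

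On $G_1$, the bound on $|f(y)|^{p_1(y)-p_2(y)}$ splits into two sub-cases. If $p_1(y)\ge p_2(y)$, then since $|f(y)|\le 1$ we immediately get $|f(y)|^{p_1(y)-p_2(y)}\le 1$. If $p_1(y)<p_2(y)$, we combine the lower bound $|f(y)|\ge (e+d(x_0,y))^{-t}$ with $0\le p_2(y)-p_1(y)\le C_0/\log(e+d(x_0,y))$ to obtain
\[
|f(y)|^{p_1(y)-p_2(y)} \le (e+d(x_0,y))^{t(p_2(y)-p_1(y))} \le (e+d(x_0,y))^{\,tC_0/\log(e+d(x_0,y))} = e^{tC_0}.
\]
In either sub-case, $|f(y)|^{p_1(y)}\le e^{tC_0}|f(y)|^{p_2(y)}$ on $G_1$, so multiplying by $u(y)$ and integrating over $G_1$ produces the first term $C\int_G|f|^{p_2(y)} u\,d\mu$ with $C:=e^{tC_0}$.

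On $G_2$, use monotonicity of $a\mapsto a^\alpha$ on $[0,1]$: since $|f(y)|\le 1$ and $p_1(y)\ge s_-(G)$, we have $|f(y)|^{p_1(y)} \le |f(y)|^{s_-(G)}$, and by the defining inequality of $G_2$ the right-hand side is at most $(e+d(x_0,y))^{-ts_-(G)}$. Integrating against $u$ over $G_2$ yields exactly the error term in the statement. Adding the two contributions gives the claimed inequality \eqref{2Log_1}.

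The whole argument is really just a careful case split; the only place where anything non-trivial enters is the second sub-case on $G_1$, where the pointwise log-H\"{o}lder-type hypothesis is used to convert a power of $(e+d(x_0,y))$ with a small exponent $p_2(y)-p_1(y)$ into the absolute constant $e^{tC_0}$. I do not expect any genuine obstacle; the mild subtlety is only to keep the monotonicity of exponential maps on $[0,1]$ versus on $[1,\infty)$ straight when signing $p_1(y)-p_2(y)$.
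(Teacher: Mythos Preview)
Your argument is correct and is exactly the standard proof of this estimate; the paper does not supply its own proof but simply cites \cite{Cruz2022}, Lemma~2.10, where the same splitting and the same use of the log-H\"older gap appear. The only point worth noting is that the undefined symbol $s_{-}(G)$ in the statement should be read as $\min\{(p_1)_-(G),(p_2)_-(G)\}$ (or equivalently $(p_1)_-(G)$, which is all you actually use), so that your inequality $p_1(y)\ge s_-(G)$ on $G_2$ is indeed automatic.
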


\subsection{Properties of Weights}\label{3.2}
~ 

This subsection is aimed to exploring the properties of the variable multiple weights condition within spaces of homogeneous type. 
The following lemma reflects the properties of $A_{\infty}(X)$, defined by $\bigcup_{p \geq 1} A_p(X)$, whose proof are similar to that of \cite[Theorem 7.3.3]{249}.

\begin{lemma}\label{Ainfty}
Let $\omega$ is a weight, then the following conditions are equivalent:
    \begin{enumerate}
        \item $\omega \in A_{\infty}(X)$.
        \item There exist constants $\epsilon>0$ and $C_2>1$ such that 
        $$
        \frac{\mu(E)}{\mu(B)} \leq C_2\left(\frac{\omega(E)}{\omega(B)}\right)^\epsilon,
        $$
        for any ball $B$ and 
        its measurable subset $E$.
        \item The measure $d\omega(x)=\omega(x) d \mu(x)$ satisfies doubling condition and there exist constants $\delta>0$ and $C_1>1$ such that 
        $$
        \frac{\omega(E)}{\omega(B)} \leq C_1\left(\frac{\mu(E)}{\mu(B)}\right)^\delta,
        $$
        for any ball $B$ and its  measurable subset $E$.
    \end{enumerate}
    \end{lemma}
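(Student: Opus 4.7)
The plan is to prove the three-way equivalence cyclically, mirroring the argument of Theorem 7.3.3 of Grafakos in the Euclidean setting and adapting each step to the space of homogeneous type $(X,d,\mu)$.

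For $(1) \Rightarrow (2)$ I would apply H\"older's inequality directly. If $\omega \in A_p(X)$ for some $p \ge 1$, writing $\chi_E = \chi_E \omega^{1/p}\omega^{-1/p}$ and pairing via H\"older with exponents $p$ and $p'$ yields
\[
\mu(E) \le \omega(E)^{1/p}\left(\int_B \omega^{1-p'}\,d\mu\right)^{1/p'},
\]
and then invoking the $A_p$ bound on $\int_B \omega^{1-p'}$ produces (2) with $\epsilon = 1/p$ and $C_2 = [\omega]_{A_p}^{1/p}$.

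For $(1) \Rightarrow (3)$ I would use the reverse H\"older inequality for $A_\infty$ weights on spaces of homogeneous type: there exist $s>1$ and $C>0$ such that
\[
\left(\frac{1}{\mu(B)}\int_B \omega^s\,d\mu\right)^{1/s} \le \frac{C}{\mu(B)}\int_B \omega\,d\mu.
\]
Applying H\"older to $\omega(E) = \int_B \chi_E\,\omega\,d\mu$ with exponents $s$ and $s'$ then gives $\omega(E) \le C\omega(B)(\mu(E)/\mu(B))^{1/s'}$, which is exactly (3) with $\delta = 1/s'$. Doubling of $d\omega$ is immediate from the $A_p$ condition applied to the pair of concentric balls $B(x,r), B(x,2r)$.

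For the reverse directions $(2) \Rightarrow (1)$ and $(3) \Rightarrow (1)$, I would recover a reverse H\"older inequality for $\omega$ from the hypothesis. Stratifying the level sets $E_k = \{x \in B : \omega(x) > 2^k \langle \omega\rangle_B\}$, estimating $\mu(E_k)$ via (2) or $\omega(E_k)$ via (3) in terms of the other measure, summing geometrically in $k$, and exploiting the doubling of $\mu$ or $d\omega$, one obtains
\[
\frac{1}{\mu(B)}\int_B \omega^{1+\gamma}\,d\mu \le C\left(\frac{1}{\mu(B)}\int_B \omega\,d\mu\right)^{1+\gamma}
\]
for some $\gamma > 0$, which by Muckenhoupt's classical characterization places $\omega$ in $A_{1+1/\gamma}(X) \subseteq A_\infty(X)$.

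The main obstacle will be the proper implementation of the level-set iteration and stopping-time argument in the quasi-metric setting. This requires the Christ-type dyadic grid (Lemma \ref{cubes}), uniform control of the quasi-triangle constant $A_0$, and careful bookkeeping of the doubling constants $C_\mu$ and of the $d\omega$-doubling constant produced along the way. Once the dyadic machinery is in place, the combinatorial arithmetic of the iteration follows the classical blueprint, so the novelty is bookkeeping rather than idea.
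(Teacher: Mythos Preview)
The paper does not actually supply a proof of this lemma; it merely states that the argument is ``similar to that of \cite[Theorem 7.3.3]{249}'' (Grafakos). Your proposal is precisely such an adaptation---H\"older for $(1)\Rightarrow(2)$, reverse H\"older for $(1)\Rightarrow(3)$, and a level-set/stopping-time argument for the converse directions---so it matches the paper's intended route. Your identification of the dyadic grid and doubling bookkeeping as the only genuine modifications needed for the quasi-metric setting is accurate.
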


\begin{lemma}[\cite{Cruz2022}, Lemma 3.3]\label{fracexp}
    Let $p(\cdot) \in LH\cap \mathscr{P}_1$ and $\omega \in A_{p(\cdot)}(X)$. Then
    \begin{align*}
            \mathop {\sup }\limits_{B \subseteq X} \left\| {\omega {\chi _B}} \right\|_{p( \cdot )}^{{p_ - }(B) - {p_ + }(B)} & \lesssim (1+[\omega]_{A_{p(\cdot)}})^{p_+ - p_-}.
    \end{align*}
The same inequality holds if we replace one of $p_+(Q)$ or $p_-(Q)$ by $p(x)$ for any $x \in Q$.
\end{lemma}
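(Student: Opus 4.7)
The plan is to dispose of the case $\|\omega\chi_B\|_{p(\cdot)}\ge 1$ at once, since the exponent $p_-(B)-p_+(B)\le 0$ forces $\|\omega\chi_B\|_{p(\cdot)}^{p_-(B)-p_+(B)}\le 1$, trivially dominated by the right-hand side. Throughout the rest of the argument I would assume $\|\omega\chi_B\|_{p(\cdot)}<1$.

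Apply the modular--norm relation of Lemma \ref{p.omega} with $f=\omega\chi_B$ and $\Omega=B$ to obtain
\[
\|\omega\chi_B\|_{p(\cdot)}^{p_+(B)}\le u(B)\le \|\omega\chi_B\|_{p(\cdot)}^{p_-(B)},\qquad u(B):=\int_B\omega(x)^{p(x)}\,d\mu(x).
\]
The left inequality gives $\|\omega\chi_B\|_{p(\cdot)}\ge u(B)^{1/p_+(B)}$, reducing the problem to showing
\[
u(B)^{-(p_+(B)-p_-(B))/p_+(B)}\lesssim \bigl(1+[\omega]_{A_{p(\cdot)}}\bigr)^{p_+-p_-}.
\]
The remaining task is to produce a quantitative lower bound on $u(B)$.

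To obtain such a lower bound I would exploit the $A_{p(\cdot)}$ hypothesis: by the variable-exponent H\"older inequality (Lemma \ref{Holder}) one has $\mu(B)\lesssim \|\omega\chi_B\|_{p(\cdot)}\|\omega^{-1}\chi_B\|_{p'(\cdot)}$, while the definition of $[\omega]_{A_{p(\cdot)}}$ gives the reverse bound $\|\omega\chi_B\|_{p(\cdot)}\|\omega^{-1}\chi_B\|_{p'(\cdot)}\le [\omega]_{A_{p(\cdot)}}\mu(B)$. Coupling the resulting control of $\|\omega^{-1}\chi_B\|_{p'(\cdot)}$ with the modular--norm relation applied to $\omega^{-1}$ in $L^{p'(\cdot)}$ together with the pointwise identity $(p'(x))^{-1}=1-1/p(x)$ extracts a lower bound of the shape $u(B)\gtrsim \mu(B)^{s(B)}/(1+[\omega]_{A_{p(\cdot)}})^{t(B)}$ for explicit exponents $s(B),t(B)$ controlled by $p_\pm(B)$. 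Substituting this into the reduced inequality and invoking Lemma \ref{lemma:diening} to absorb residual powers of $\mu(B)$ via $\mu(B)^{p_-(B)-p_+(B)}\lesssim 1$ would yield the advertised bound.

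The main technical obstacle is the bookkeeping of exponents: the oscillation $p_+(B)-p_-(B)$ can be arbitrarily small on small balls, so the factor $(p_+(B)-p_-(B))/p_+(B)$ is small and multiplicative constants raised to that power are nearly harmless, but any blow-up in $\mu(B)^{-1}$ must be balanced exactly. The log-H\"older property encoded in Lemma \ref{lemma:diening} is precisely what guarantees this balance, ensuring the weight constant enters with exponent at most $p_+-p_-$ as required. The concluding clause of the lemma, in which one of $p_\pm(Q)$ is replaced by $p(x)$ for $x\in Q$, requires no new argument since $p_-(Q)\le p(x)\le p_+(Q)$ and each intermediate inequality is preserved verbatim.
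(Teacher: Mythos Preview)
Your proposal contains a genuine gap at the crucial step. You claim that coupling H\"older's inequality, the $A_{p(\cdot)}$ condition, and the modular--norm relation for $\omega^{-1}$ will ``extract a lower bound of the shape $u(B)\gtrsim \mu(B)^{s(B)}/(1+[\omega]_{A_{p(\cdot)}})^{t(B)}$.'' No such bound can exist with implicit constant independent of $\omega$. Take $\omega\equiv\epsilon$ for small $\epsilon>0$: then $u(B)=\int_B \epsilon^{p(x)}\,d\mu\to 0$ as $\epsilon\to 0$, while $\mu(B)$ is fixed and $[\omega]_{A_{p(\cdot)}}=\sup_{B'}\mu(B')^{-1}\|\chi_{B'}\|_{p(\cdot)}\|\chi_{B'}\|_{p'(\cdot)}$ is independent of $\epsilon$. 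So the bound you are aiming for is simply false, and the vague ``coupling'' you describe cannot produce it. (The direct attempt is circular: H\"older gives $\|\omega\chi_B\|_{p(\cdot)}\gtrsim \mu(B)/\|\omega^{-1}\chi_B\|_{p'(\cdot)}$, while $A_{p(\cdot)}$ gives $\|\omega^{-1}\chi_B\|_{p'(\cdot)}\le [\omega]_{A_{p(\cdot)}}\mu(B)/\|\omega\chi_B\|_{p(\cdot)}$; substituting one into the other yields a triviality.)

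The paper does not supply its own proof here but cites \cite{Cruz2022}, Lemma~3.3. That argument proceeds quite differently: one fixes a reference ball $B_0$ containing the base point $x_0$ and compares an arbitrary $B$ to $B_0$, splitting into cases according to the relative position and size of $B$. The $A_\infty$ property of $u=\omega^{p(\cdot)}$ (Lemma~\ref{Apq_Ainfty}) and the doubling it entails are what transfer information from $B_0$ to $B$; the resulting constant depends on $\omega$ through the fixed quantity $\|\omega\chi_{B_0}\|_{p(\cdot)}$, not solely through $[\omega]_{A_{p(\cdot)}}$. Your strategy of seeking a lower bound on $u(B)$ purely in terms of $\mu(B)$ and the $A_{p(\cdot)}$ characteristic misses this essential localization to a reference ball.

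There is also a minor slip: from the left inequality $\|\omega\chi_B\|_{p(\cdot)}^{p_+(B)}\le u(B)$ you deduce $\|\omega\chi_B\|_{p(\cdot)}\ge u(B)^{1/p_+(B)}$, but taking the $1/p_+(B)$-th root gives $\le$, not $\ge$. The lower bound you want is $\|\omega\chi_B\|_{p(\cdot)}\ge u(B)^{1/p_-(B)}$, coming from the \emph{right} inequality.
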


\begin{lemma}[\cite{Cen2024}, Lemma 2.14.]\label{lemma:p-infty-cond}
Given $\pp \in LH \cap \P_1$.  Let $\omega\in A_\pp(X)$ and let
    	$u(x)=\omega(x)^{p(x)}$. For any cube $Q$ with
	$\|\omega\chi_Q\|_\pp \geq 1$, then
	$\|\omega\chi_Q\|_\pp \approx u(Q)^{\frac{1}{p_\infty}}$.  Moreover,
	given any $E \subseteq Q$,
    \begin{equation*}
        \left(\frac{\mu(E)}{\mu(B)}\right) \lesssim  [\omega]_{A_{p(\cdot)}} (1+[\omega]_{A_{p(\cdot)}})^{\frac{p_+ - p_-}{p_-} }\left(\frac{u(E)}{u(B)}\right)^{1 / p_{+}},
    \end{equation*}
where the implicit constant is independent on $\omega$.
\end{lemma}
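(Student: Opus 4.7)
The plan is to prove the two assertions in turn, with the first feeding into the second. For the initial equivalence $\|\omega\chi_Q\|_{p(\cdot)} \approx u(Q)^{1/p_\infty}$ under the assumption $\|\omega\chi_Q\|_{p(\cdot)} \geq 1$, I would start from the norm--modular correspondence in Lemma \ref{p.omega}, which yields
\begin{equation*}
\|\omega\chi_Q\|_{p(\cdot)}^{p_-(Q)} \leq u(Q) \leq \|\omega\chi_Q\|_{p(\cdot)}^{p_+(Q)}.
\end{equation*}
Taking a $p_\infty$-th root, the two sides differ from $\|\omega\chi_Q\|_{p(\cdot)}$ only in the exponent, and the discrepancy is a factor of the form $\|\omega\chi_Q\|_{p(\cdot)}^{p_\pm(Q)/p_\infty - 1}$. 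The $LH_\infty$ hypothesis combined with Lemma \ref{fracexp} gives
\begin{equation*}
\|\omega\chi_Q\|_{p(\cdot)}^{p_+(Q) - p_-(Q)} \lesssim (1 + [\omega]_{A_{p(\cdot)}})^{p_+ - p_-},
\end{equation*}
so these exponent swaps cost only a constant depending on $[\omega]_{A_{p(\cdot)}}$ and $p(\cdot)$. Combining the two yields the claimed equivalence.

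For the second inequality I would begin with Hölder's inequality applied to $d\mu = \omega \cdot \omega^{-1} d\mu$ on $E$, then extend the $\omega^{-1}$-norm from $E$ up to $Q$:
\begin{equation*}
\mu(E) \leq C\, \|\omega\chi_E\|_{p(\cdot)} \|\omega^{-1}\chi_E\|_{p'(\cdot)} \leq C\, \|\omega\chi_E\|_{p(\cdot)} \|\omega^{-1}\chi_Q\|_{p'(\cdot)}.
\end{equation*}
Invoking the definition of $A_{p(\cdot)}(X)$ to bound $\|\omega^{-1}\chi_Q\|_{p'(\cdot)} \leq [\omega]_{A_{p(\cdot)}}\, \mu(Q)/\|\omega\chi_Q\|_{p(\cdot)}$, I obtain
\begin{equation*}
\frac{\mu(E)}{\mu(Q)} \lesssim [\omega]_{A_{p(\cdot)}}\, \frac{\|\omega\chi_E\|_{p(\cdot)}}{\|\omega\chi_Q\|_{p(\cdot)}}.
\end{equation*}
The denominator is converted by the first part of the lemma into a multiple of $u(Q)^{1/p_\infty}$, while the numerator is bounded, via Lemma \ref{p.omega}, by $u(E)^{1/p_+(E)}$ when $\|\omega\chi_E\|_{p(\cdot)} \leq 1$ and by $u(E)^{1/p_-(E)}$ otherwise. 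Pushing both situations to the target exponent $1/p_+$ costs extra factors like $u(E)^{1/p_-(E) - 1/p_+}$; here the $LH$ hypotheses and the $A_\infty$ property extracted in Lemma \ref{Ainfty} convert these into a constant of the expected shape $(1 + [\omega]_{A_{p(\cdot)}})^{(p_+ - p_-)/p_-}$.

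The main obstacle I expect is exactly this last exponent manipulation: the case distinction between $\|\omega\chi_E\|_{p(\cdot)} \lessgtr 1$ governs which side of the modular inequality in Lemma \ref{p.omega} is available, and the ratio $u(E)/u(Q)$ may be either small or large, which flips the monotonicity of $t \mapsto t^{\alpha}$ in each exponent swap. The trick is to split into cases, track carefully which exponent among $p_-(E), p_+(E), p_\infty, p_+$ is needed, and use Lemma \ref{fracexp} together with Lemma \ref{lemma:diening} to absorb every difference between these exponents into a power of $(1 + [\omega]_{A_{p(\cdot)}})$. Once this bookkeeping is organized the bound lines up precisely with the stated constant $[\omega]_{A_{p(\cdot)}} (1 + [\omega]_{A_{p(\cdot)}})^{(p_+ - p_-)/p_-}$, and the rest of the argument is mechanical.
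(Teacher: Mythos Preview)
The paper does not supply its own proof of this lemma: it is quoted verbatim as \cite{Cen2024}, Lemma 2.14, and no argument is reproduced here. So there is nothing in the present paper to compare your proposal against.

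That said, your outline follows the standard route one finds in the literature (e.g.\ the variable-exponent weighted theory of Cruz-Uribe and collaborators) and is essentially correct. Two small points to watch. First, in the equivalence $\|\omega\chi_Q\|_{p(\cdot)}\approx u(Q)^{1/p_\infty}$ you invoke Lemma~\ref{fracexp} to swap between $p_\pm(Q)$ and $p_\infty$, but as stated that lemma only lets you replace $p_\pm(Q)$ by $p(x)$ for $x\in Q$; the passage to $p_\infty$ requires an additional $LH_\infty$ argument (split into the cases $Q$ near or far from the base point $x_0$, as in the $\mathscr{G}/\mathscr{H}$ decomposition later in the paper). Second, in the ratio estimate your case analysis on $\|\omega\chi_E\|_{p(\cdot)}\lessgtr 1$ is the right idea, but be careful that the exponent you land on is $1/p_+$ (global), not $1/p_+(Q)$; bridging that gap again uses $LH$ together with Lemma~\ref{fracexp}. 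With those two bookkeeping steps filled in, your argument goes through and matches what one expects the cited proof to do.
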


\subsection{Dyadic Analysis}\label{3.3}
~

The proof of our main theorem relies heavily on dyadic cubes of spaces of homogeneous type, for its classical case in Euclidean space, as follows.
$$
Q = {2^k}([0,1)^n + m), \quad k\in \mathbb{Z}, m\in \Z^n.
$$
The following discussion adopts the framework of dyadic cubes as formulated by Hytönen and Kairema \cite{Hy2012}, as explicated in \cite{And2015}.
\begin{lemma}[\cite{And2015}, Theorem 2.1]\label{cubes}
    There exist a family $\mathcal{D}=\bigcup_{k \in \mathbb{Z}} \mathcal{D}_k$ (called the dyadic grid of X), composed of subsets of $X$, such that:
    \begin{enumerate}
        \item For cubes $Q_1, Q_2 \in \mathcal{D}$, either $Q_1 \cap Q_2=\varnothing$, $Q_1 \subseteq Q_2$, or $Q_2 \subseteq Q_1$.
        \item The cubes $Q \in \mathcal{D}_k$ are pairwise disjoint. And for any $k\in \mathbb{Z}$, $X=\bigcup_{Q \in \mathcal{D}_k} Q$. We call $\mathcal{D}_k$ as the $k$th generation.
        
        \item For any $Q_1\in \mathcal{D}_k$, there always exists at least one $Q_2$ in $\mathcal{D}_{k+1}$ (called a child of $Q_1$), such that $Q_2 \subseteq Q_1$, and there always exists exactly one $Q_3$ in $\mathcal{D}_{k-1}$ (called the parent of $Q_1$), such that $Q_1 \subseteq Q_3$.
        \item If $Q_2$ is a child of $Q_1$, then for a constant $0<\epsilon<1$, depended on the set $X$, $\mu\left(Q_2\right) \geq \epsilon \mu\left(Q_1\right)$. 
        \item For every $k\in \mathbb{Z}$ and $Q \in \mathcal{D}_k$, there exists constants $C_d$ and $d_0>1$, such that
        $$
        B\left(x_c(Q), d_0^k\right) \subseteq Q \subseteq B\left(x_c(Q), C_d d_0^k\right),
        $$
        where $x_c(Q)$ denotes the centre of cube $Q\in \mathcal{D}$.
    \end{enumerate}
    
    We call the family $\mathcal{D}$ as dyadic grid and the cubes $Q\in \mathcal{D}$ as dyadic cubes.
\end{lemma}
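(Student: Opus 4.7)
The plan is to adapt the Hytönen--Kairema construction \cite{Hy2012}. Fix a dilation factor $d_0>1$ chosen large relative to the quasi-metric constant $A_0$, and for each $k \in \mathbb{Z}$ select a maximal $d_0^k$-separated set $\mathcal{Z}_k=\{z_\alpha^k\}_\alpha \subseteq X$. Maximality yields two facts that will be used repeatedly: the balls $\{B(z_\alpha^k,d_0^k)\}_\alpha$ cover $X$, while the balls $\{B(z_\alpha^k,d_0^k/(2A_0))\}_\alpha$ are pairwise disjoint. The points $z_\alpha^k$ will serve as the cube centers $x_c(Q_\alpha^k)$.

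Next, I would impose a parent-child structure on $\bigcup_k \mathcal{Z}_k$: to each $z_\alpha^k$ assign as parent the nearest point of $\mathcal{Z}_{k-1}$, with ties broken by a fixed well-ordering of indices. This produces a rooted tree on the net points. I would then define the cubes inductively so they respect this tree: start with a Voronoi-type partition at each scale $k$ via nearest neighbor in $\mathcal{Z}_k$, and reassign boundary points so that each scale-$k$ cell $Q_\alpha^k$ is precisely the union of scale-$(k+1)$ cells $Q_\beta^{k+1}$ whose centers have parent $z_\alpha^k$. This automatically yields the nesting property (1), the partition property (2), and the parent-child relation (3). For property (5), the inner inclusion $B(z_\alpha^k, c\,d_0^k) \subseteq Q_\alpha^k$ follows from the disjointness of the small net balls, while the outer inclusion $Q_\alpha^k \subseteq B(z_\alpha^k, C_d d_0^k)$ follows because every point of $Q_\alpha^k$ lies in the Voronoi cell of some deep descendant, whose distance to $z_\alpha^k$ is bounded by a convergent geometric series $\sum_{j \geq k} C A_0^{j-k} d_0^j$ provided $d_0$ is chosen large enough relative to $A_0$. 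Property (4), the comparable size of parent and child, then follows from (5) combined with the doubling condition on $\mu$.

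The main obstacle is reconciling the nesting property (1) with the quantitative ball sandwich (5). In Euclidean space nesting is automatic from dyadic scaling, but in a general quasi-metric space the Voronoi cells at consecutive scales do not naturally nest, so one must force nesting via the parent-child tree and then check that the forced reassignment of boundary points does not destroy the inner ball inclusion at any scale. This forces $d_0$ to be chosen large enough relative to $A_0$ so that the geometric series controlling the boundary perturbations stays strictly smaller than the protected inner radius. A secondary technical point is verifying (4) uniformly in $k$: this requires invoking the doubling of $\mu$ together with (5) to compare $\mu(Q_\beta^{k+1})$ with $\mu(Q_\alpha^k)$ for parent-child pairs via sandwiching balls of comparable radii.
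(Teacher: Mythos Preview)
The paper does not prove this lemma; it is quoted verbatim as a known result from \cite{And2015} (itself a restatement of the Hytönen--Kairema construction in \cite{Hy2012}), and the paper proceeds immediately to the next statement without any argument. Your sketch of the Hytönen--Kairema construction is the correct and standard route to this result, and the obstacles you identify (forcing nesting via the parent tree without destroying the inner-ball inclusion, and deducing property (4) from (5) plus doubling) are precisely the technical points that \cite{Hy2012} handles; so your proposal is in line with the original source, though the present paper simply takes the lemma as given.
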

We introduce a dyadic cubes version of $A_{\vec{p}(\cdot),q(\cdot)}(X)$ below.

\begin{lemma}
 \label{Apqcube}
 Let $p_i( \cdot ) \in LH\cap \P$, $i = 1, \ldots ,m$, with $\frac{1}{{p( \cdot )}} = \sum\limits_{i = 1}^m {\frac{1}{{{p_i}( \cdot )}}}$, $\eta=\frac{1}{{{p}( \cdot )}}-\frac{1}{{{q}( \cdot )}} \in [0,m)$, and $\mathcal{D}$ is a dyadic grid. If $\vec{\omega} \in A_{\vec{p}(\cdot), q(\cdot)}$, then $\vec{\omega} \in {A_{\vec{p}( \cdot ),q( \cdot )}^{\mathcal D}}$. More precisely, 
$$
{\left[ \vec{\omega}  \right]_{A_{\vec{p}( \cdot ),q( \cdot )}^{\mathcal D}(X)}}: = \mathop {\sup }\limits_{Q \in {\mathcal D}} \mu {(Q)^{\eta  - m}}{\left\| {\omega {\chi _Q}} \right\|_{q( \cdot )}}\prod_{i=1}^{m}{\left\| {{\omega_i ^{ - 1}}{\chi _Q}} \right\|_{p_i'( \cdot )}} \lesssim {\left[ \vec{\omega}  \right]_{A_{\vec{p}( \cdot ),q( \cdot )}(X)}}.
$$
\end{lemma}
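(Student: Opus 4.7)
The plan is to show that every dyadic cube can be sandwiched between two balls of comparable $\mu$-measure, and then transfer the $A_{\vec{p}(\cdot),q(\cdot)}(X)$ condition from balls to dyadic cubes with only a doubling constant lost.

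First I would fix an arbitrary $Q \in \mathcal{D}_k$ and invoke Lemma \ref{cubes}(5), which gives constants $C_d \geq 1$ and $d_0 > 1$ such that
\[
B_{\text{in}} := B(x_c(Q), d_0^k) \subseteq Q \subseteq B(x_c(Q), C_d d_0^k) =: B.
\]
Since $C_d d_0^k \leq 2^N d_0^k$ for some fixed integer $N$ depending only on $C_d$, iterating the doubling condition on $\mu$ yields $\mu(B) \leq C_\mu^N \mu(B_{\text{in}}) \leq C_\mu^N \mu(Q)$, while $\mu(Q) \leq \mu(B)$ trivially. Hence $\mu(Q) \approx \mu(B)$ with constants depending only on $X$.

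Next, because $\chi_Q \leq \chi_B$ pointwise, the monotonicity of the Luxemburg norm (for non-negative functions) gives
\[
\|\omega \chi_Q\|_{q(\cdot)} \leq \|\omega \chi_B\|_{q(\cdot)}, \qquad \|\omega_i^{-1}\chi_Q\|_{p_i'(\cdot)} \leq \|\omega_i^{-1}\chi_B\|_{p_i'(\cdot)}, \quad i=1,\ldots,m.
\]
Since $\eta - m < 0$ and $\mu(Q) \approx \mu(B)$, we also have $\mu(Q)^{\eta-m} \lesssim \mu(B)^{\eta-m}$, with an implicit constant depending only on $\eta$, $m$, and $C_\mu$. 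Combining these estimates,
\[
\mu(Q)^{\eta-m} \|\omega \chi_Q\|_{q(\cdot)} \prod_{i=1}^{m}\|\omega_i^{-1}\chi_Q\|_{p_i'(\cdot)}
\lesssim \mu(B)^{\eta-m} \|\omega \chi_B\|_{q(\cdot)} \prod_{i=1}^{m}\|\omega_i^{-1}\chi_B\|_{p_i'(\cdot)} \leq [\vec{\omega}]_{A_{\vec{p}(\cdot),q(\cdot)}(X)}.
\]
Taking the supremum over $Q \in \mathcal{D}$ yields the desired inequality.

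The only subtle step is ensuring the direction of the comparison of measures works correctly: because $\eta - m < 0$, we genuinely need an upper bound $\mu(B) \lesssim \mu(Q)$ (not just $\mu(Q) \leq \mu(B)$), and this upper bound is precisely what the doubling hypothesis on $\mu$ supplies via the inner ball $B_{\text{in}} \subseteq Q$. No deeper input (such as the $LH$ regularity of the exponents) is required here; the result is purely a geometric-measure-theoretic transfer from balls to dyadic cubes, and the implicit constant depends only on $\eta$, $m$, $C_\mu$, $C_d$, and $d_0$.
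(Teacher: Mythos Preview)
Your proof is correct and follows essentially the same approach as the paper: enclose $Q$ between the inner and outer balls from Lemma~\ref{cubes}(5), bound the norms by the outer ball, apply the $A_{\vec{p}(\cdot),q(\cdot)}$ condition there, and then use doubling to replace $\mu(B)^{m-\eta}$ by $\mu(Q)^{m-\eta}$. The only cosmetic difference is that the paper cites Lemma~\ref{LMB.} for the measure comparison of the concentric balls, whereas you iterate the doubling condition directly; these are equivalent.
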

\begin{proof}
Using Theorem \ref{cubes} with fixing $Q \in \mathcal{D}_k$ and Lemma \ref{LMB.},
\begin{align*}
& \left\|\omega \chi_Q\right\|_{q(\cdot)}\prod_{i=1}^{m}\left\|\omega_i^{-1} \chi_Q\right\|_{p_i^{\prime}(\cdot)} \leq\left\|\omega \chi_{B\left(x_c(Q), C_d d_0^k\right)}\right\|_{q(\cdot)} \prod_{i=1}^{m} \left\|\omega_i^{-1} \chi_{B\left(x_c(Q), C_d r d_0^k\right)}\right\|_{p_i^{\prime}(\cdot)} \\
& \le {\left[ \vec{\omega}  \right]_{A_{\vec{p}( \cdot ),q( \cdot )}(X)}} \mu\left(B\left(x_c(Q), C d_0^k\right)\right)^{m-\eta} \lesssim {\left[ \vec{\omega}  \right]_{A_{\vec{p}( \cdot ),q( \cdot )}}}\mu\left(B\left(x_c(Q), d_0^k\right)\right)^{m-\eta} \\
&\le {\left[ \vec{\omega}  \right]_{A_{\vec{p}( \cdot ),q( \cdot )}(X)}}\mu(Q)^{m-\eta} .\qedhere
\end{align*}
\end{proof}

We introduce some notation as follows, which will be used multiple times in the proof of main theorems.

Let \(q(\cdot)\) and \(p_i(\cdot)\) for \(i = 1, \ldots, m\) belong to \(\P_0\). For a set \(E \subseteq X\), we define \(\beta(E)\) by
\begin{equation*}
\frac{1}{\beta(E)}=\sum\limits_{i = 1}^m {\frac{1}{{{{({p_i})}_ - }(E)}}},
\end{equation*}
and  define $\delta$ by
\begin{align}\label{delta}
\sum\limits_{i = 1}^m {\frac{1}{{{{({p_i})}_ - }}}}  - \eta  \ge \frac{1}{{\delta (E)}}: = \frac{1}{{\beta (E)}} - \eta  \ge \frac{1}{q_ -(E)}\ge \frac{1}{q(x)},
\end{align}
where $x \in E$ and $\eta=\sum\limits_{i = 1}^m {\frac{1}{{{p_i}( \cdot )}}}-\frac{1}{{{q}( \cdot )}}>0$.

We now give the following Lemma over the balls.
\begin{lemma}\label{B-q-relation}
Let $h(\cdot) \in \P$, $p_i(\cdot) \in LH \cap \P_1$, $i=1,\cdots,m$, with $\frac{1}{{p( \cdot )}} = \sum\limits_{i = 1}^m {\frac{1}{{{p_i}( \cdot )}}}$, $\eta=\frac{1}{{{p}( \cdot )}}-\frac{1}{{{q}( \cdot )}}>0$, and $v\in A_{h(\cdot)}(X)$.
Suppose that $\delta$ is defined as \eqref{delta}. Then we have
\begin{equation}
{V_0}: = \mathop {\sup }\limits_{C > 1} \mathop {\sup }\limits_{x \in CB} \mathop {\sup }\limits_{B \subseteq X} \left\| {{v^{ - 1}}{\chi _{B}}} \right\|_{h'( \cdot )}^{\delta (CB) - q(x)} < \infty.
\end{equation}               
\end{lemma}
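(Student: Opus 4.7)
The plan is to analyze the quantity $\|v^{-1}\chi_B\|_{h'(\cdot)}^{\delta(CB)-q(x)}$ by a case split on the base, after first pinning down the sign of the exponent. The starting observation is that for any $x \in CB$, $\delta(CB) - q(x) \leq 0$: indeed, from \eqref{delta}, $(p_i)_-(CB) \leq p_i(x)$ implies $\tfrac{1}{\delta(CB)} = \sum_i \tfrac{1}{(p_i)_-(CB)} - \eta \geq \sum_i \tfrac{1}{p_i(x)} - \eta = \tfrac{1}{q(x)}$, hence $\delta(CB)\le q(x)$. Consequently, whenever $\|v^{-1}\chi_B\|_{h'(\cdot)} \geq 1$, the base is $\geq 1$ raised to a nonpositive power, so the quantity is $\leq 1$ and there is nothing to prove.

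The substantive case is $N := \|v^{-1}\chi_B\|_{h'(\cdot)} < 1$, where the desired bound is equivalent to showing $(q(x) - \delta(CB))\,\log N^{-1} \lesssim 1$. I would estimate the two factors separately. For the exponent gap, the $LH$-continuity of each $p_i(\cdot)$ transfers, through the identity $\tfrac{1}{q(x)} - \tfrac{1}{\delta(CB)} = \sum_i \bigl(\tfrac{1}{p_i(x)} - \tfrac{1}{(p_i)_-(CB)}\bigr)$, to a quantitative bound of the form $q(x) - \delta(CB) \lesssim \tfrac{1}{\log(e + 1/r_{CB})} + \tfrac{1}{\log(e + d(x,x_0))}$, the first term coming from $LH_0$ and controlling the small-scale regime, the second from $LH_\infty$ and controlling the far-field regime.

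For $\log N^{-1}$, I would exploit $v \in A_{h(\cdot)}(X)$. The variable-exponent Muckenhoupt identity yields $\|v\chi_B\|_{h(\cdot)}\|v^{-1}\chi_B\|_{h'(\cdot)} \approx \mu(B)$, so $\log N^{-1}$ is dominated by $\log\|v\chi_B\|_{h(\cdot)} + \log \mu(B)^{-1}$; then Lemma \ref{fracexp} applied to $v\in A_{h(\cdot)}$ together with the polynomial ball-growth estimates of Lemma \ref{LMB.} and Lemma \ref{lemma:diening} show that $\log N^{-1}$ grows at most like $\log(e + 1/r_{CB}) + \log(e + d(x,x_0))$ up to additive constants. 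Multiplying this against the logarithmic decay of $q(x) - \delta(CB)$ yields the uniform bound.

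The main obstacle I expect is orchestrating these two estimates across all scales and all choices of the dilation factor $C > 1$ in a single uniform statement. As $C$ grows, $CB$ enlarges and $\delta(CB)$ decreases, so the exponent gap $q(x) - \delta(CB)$ can grow, yet it must remain controlled by the LH estimates based on the geometry of $CB$ rather than $B$. Moreover, the norm $\|v^{-1}\chi_B\|_{h'(\cdot)}$ depends only on $B$ and on the \emph{unrelated} exponent $h(\cdot)$, so the delicate point is verifying that the size estimates coming from $v\in A_{h(\cdot)}$ indeed pair correctly against the exponents produced by the family $\{p_i(\cdot)\}_{i=1}^m$.
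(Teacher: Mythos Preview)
Your opening reduction is right and matches the paper: since $\delta(CB)\le q(x)$ for $x\in CB$, only the regime $N:=\|v^{-1}\chi_B\|_{h'(\cdot)}<1$ is at issue, and the task becomes bounding $(q(x)-\delta(CB))\log N^{-1}$.

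The gap is in the multiplicative step. You claim $\log N^{-1}\lesssim \log(e+1/r_{CB})+\log(e+d(x,x_0))$, but $N$ depends only on $B$ and on $v$, not on $C$ or on $x$; a bound phrased in terms of $r_{CB}$ and $d(x,x_0)$ cannot hold uniformly. If one repairs this to the natural bound $\log N^{-1}\lesssim \log(e+1/r_B)+\text{const}$ coming from $A_{h(\cdot)}$ and a fixed reference ball, then it no longer pairs with your exponent estimate, which is controlled by $r_{CB}$, not $r_B$. Even accepting both bounds formally, the product
\[
\Bigl(\tfrac{1}{\log(e+1/r_{CB})}+\tfrac{1}{\log(e+d(x,x_0))}\Bigr)\Bigl(\log(e+1/r_{CB})+\log(e+d(x,x_0))\Bigr)
\]
produces cross terms $\frac{\log(e+1/r_{CB})}{\log(e+d(x,x_0))}$ and its reciprocal, which are not uniformly bounded over all $B$, $C$, $x$. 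So the ``multiply the two estimates'' strategy fails exactly at the point you flagged as the main obstacle.

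The paper circumvents this by a geometric case split relative to a fixed reference ball $B_0$ centered at the $LH_\infty$ base point $x_0$. In the \emph{near} case ($\dist(B,B_0)\le r(B_0)$, $r(B)\le r(B_0)$), the $A_{h(\cdot)}$ condition compared against $4B_0$ yields $N\gtrsim \mu(B)$ with a constant depending only on $v$ and $B_0$, so $N^{\delta(CB)-q(x)}\lesssim \mu(B)^{\delta(CB)-q(x)}$, which is then bounded via Lemma~\ref{lemma:diening} and the local oscillation bound on $q(x)-\delta(CB)$. In the \emph{far} case, one constructs an enlarged ball $\tilde B\ni B,B_0$ with $r(\tilde B)\approx d_B:=\dist(B,x_0)$; the $A_{h(\cdot)}$ comparison against $\tilde B$ reduces the problem to $\mu(B(x_0,e+d_B))^{q(x)-\delta(CB)}$, and now $LH_\infty$ gives $|q(x)-\delta(CB)|\lesssim 1/\log(e+d_B)$, so the exponent and the base are governed by the \emph{same} scale $d_B$ and the product is bounded. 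The point is that the split synchronizes the two estimates in each regime, which is precisely what is missing from your direct-product argument.
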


\begin{proof}
Without loss of generality, we assume that $\|v^{-1}\chi_B\|_{h'(\cdot)}\leq 1$ and $m=2$. 

Set $B_{0}$ be the cube centered at $x_0$ with $\mu(B_0)=1$.  Then
either $r(B)\leq r(B_0)$ or $r(B)>r(B_0)$.  We will
prove~\eqref{EQ-keyboundednorm} in the first case; the proof of the
second case is the same, exchanging the roles of $B$ and $B_0$. Suppose  that  $\dist(B,B_{0})\le r(B_{0})$. Note that
$B \subseteq 4B_{0}$. 
%
%
Then for any $x \in CB$,
\begin{equation}\label{EQ-logcontinuityof-q}
0 \le q(x)-\delta(CB)\lesssim_{{n,\alpha ,{p_1}( \cdot ),{p_1}( \cdot )}}(p_{1})_{+}(CB)-(p_{1})_{-}(CB)+(p_{2})_{+}(CB)-(p_{2})_{-}(CB).
\end{equation}
Therefore, by H\"older's inequality and $v\in A_{h(\cdot)}$,
\begin{align} \label{eqn:small}
\mu(B)\lesssim\| v^{-1}\chi_B\|_{h'(\cdot)}(\mu(4B_{0}))^{-1}\|v\chi_{4B_{0}}\|_{h(\cdot)}\le {\left[ v \right]_{{A_{h( \cdot )}}}} \|v^{-1}\chi_B\|_{h'(\cdot)}\|v^{-1}\chi_{4B_{0}}\|_{h'(\cdot)}^{-1}\mu(4B_{0}). 
\end{align}
Hence, by \eqref{EQ-logcontinuityof-q}, \eqref{eqn:small}, and Lemma~\ref{lemma:diening},
\begin{align*}
\|v^{-1}\chi_B\|_{h'(\cdot)}^{\delta(CB)-q(x)} 
&\leq [v]_{A_{h(\cdot)}}^{q(x) - \delta (CB)} \left(1+\|v^{-1}\chi_{4B_{0}}\|_{h'(\cdot)}^{-1}\right)^{q_{+}-\inf\limits_{B} \delta (CB)} \mu(B)^{\delta(CB)-q(x)} \\
&\lesssim 1.
\end{align*}

Now assume that $\dist(B,B_{0})\geqslant r(B_{0})$ and note that the fact that $r(B)\leq r(B_0)$.  Then there exists a ball
${\tilde B}={\tilde B}(x_0,4dist(B,B_0))$ such that $B,\, B_0 \subseteq {\tilde B}$ and
$ r({\tilde B})\approx \dist(B,B_{0})\approx
\dist(B,x_0)=:d_{B}$. 
It follows the doubling property of $\mu$ that $\mu (\tilde B) \lesssim \mu (B({x_0},e + {d_B}))$.

Therefore, arguing as we did in
inequality~\eqref{eqn:small}, replacing $4B_0$ with ${\tilde B}$, it follows that
   \begin{equation*}
\mu(B)\lesssim {\left[ v \right]_{{A_{h( \cdot )}}}} \mu (\tilde B)\|v^{-1}\chi_B\|_{h'(\cdot)}\|v^{-1}\chi_{\tilde B}\|_{h'(\cdot)}^{-1}.
   \end{equation*}
By the above, Lemma \ref{lemma:diening}, and the fact that
$\|v^{-1}\chi_{B_0}\|_{h'(\cdot)} \le \|v^{-1}\chi_{\tilde B}\|_{h'(\cdot)}$, we get
\begin{align}\label{es.v}
\|v^{-1}\chi_B \|_{h'(\cdot)}^{\delta(CB)-q(x)}&\lesssim
[v]_{{A_{h( \cdot )}}}^{q(x) - \delta (CB)}\mu {(B)^{\delta (CB) - q(x)}}\left\| {{v^{ - 1}}{\chi _{{B_0}}}} \right\|_{h'( \cdot )}^{\delta (CB) - q(x)}\mu {(\tilde B)^{q(x) - \delta (CB)}} \notag\\ 
&\lesssim \mu {(\tilde B)^{q(x) - \delta (CB)}} 
\lesssim \mu {(B({x_0},e + {d_B}))^{q(x) - \delta (CB)}}.
\end{align}

To estimate this final term, note that since $p_j(\cdot)\in LH$, there
exist $x_{1}, x_{2} \in \overline{CB}$ such that $(p_{1})_{-}(CB)=p_{1}(x_{1})$
and $(p_{2})_{-}(CB)=p_{2}(x_{2})$.

Moreover,
$d(x_0,x_1),d(x_0,x_2)\approx d_{B}$.  
Therefore, again by
log-H\"older continuity,
\[ 
\left| {{\frac{1}{\delta(CB)}-\frac{1}{q_{\infty}}}} \right|
\leq
\left|\frac{1}{p_1(x_1)}-\frac{1}{(p_1)_\infty}\right|
+
\left|\frac{1}{p_2(x_2)}-\frac{1}{(p_2)_\infty}\right|
\lesssim 
\frac{1}{\log(e+d_{B})}. \] 
Therefore, for $x\in CB$, since $d(x_0,x)\approx d_{B}$,
\begin{align}\label{LLH}
\left|{\frac{1}{\delta(CB)}-\frac{1}{q(x)}}\right|
\leq \left|{\frac{1}{\delta(CB)}-\frac{1}{q_{\infty}}}\right|+\left|{\frac{1}{p_{\infty}}-\frac{1}{p(x)}}\right|
\lesssim\frac{1}{\log(e+d_{B})},
\end{align}
It follows from \eqref{es.v}, \eqref{LLH}, and the doubling property of $\mu$ that
\begin{equation*}
\| v^{-1}\chi_B\|_{h(\cdot)}^{\delta(CB)-q(x)}\lesssim \mu {(B({x_0},e + {d_B}))^{q(x) - \delta (CB)}} 
\lesssim 1.\qedhere
\end{equation*}
  \end{proof}

The next lemma is for dyadic cubes that is similar to the last above and plays a important role in our proof, which is dedicated to the proof of \eqref{EQ-I12estimate}.

\begin{lemma}\label{q-relation}
Let $h(\cdot) \in \P$, $p_i(\cdot) \in LH \cap \P_1$, $i=1,\cdots,m$, with $\frac{1}{{p( \cdot )}} = \sum\limits_{i = 1}^m {\frac{1}{{{p_i}( \cdot )}}}$, $\eta=\frac{1}{{{p}( \cdot )}}-\frac{1}{{{q}( \cdot )}}>0$, and $v\in A_{h(\cdot)}(X)$.
Suppose that $\mathcal{D}$ is a dyadic grid on $X$ and $\delta$ is defined as \eqref{delta}. Then for any $x \in Q$,
\begin{equation}\label{EQ-keyboundednorm}
\mathop {\sup }\limits_{Q \in \mathcal{D}} \left\| {{v^{ - 1}}{\chi _{Q}}} \right\|_{h'( \cdot )}^{\delta (Q) - q(x)} \le {V_0}.
\end{equation}               
\end{lemma}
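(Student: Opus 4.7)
The plan is to reduce the dyadic statement to the continuous one already established in Lemma \ref{B-q-relation}, using the sandwich property (5) of the dyadic grid from Lemma \ref{cubes}. Given $Q \in \mathcal{D}_k$, I would set $B_Q := B(x_c(Q), d_0^k)$, so that $B_Q \subseteq Q \subseteq C_d B_Q$. Applying Lemma \ref{B-q-relation} with $C = C_d$ and $B = B_Q$ yields, for any $x \in C_d B_Q$,
\[
\|v^{-1}\chi_{B_Q}\|_{h'(\cdot)}^{\delta(C_d B_Q) - q(x)} \leq V_0,
\]
and since $Q \subseteq C_d B_Q$ this inequality is in particular available for every $x \in Q$. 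The task is then to transfer this bound from the inner ball and outer dilated ball to the cube $Q$ itself.

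First I would note that the exponent $\delta(Q) - q(x)$ is non-positive for every $x \in Q$: indeed, by \eqref{delta}, $\delta(Q) \leq q_-(Q) \leq q(x)$. Consequently, in the regime $\|v^{-1}\chi_Q\|_{h'(\cdot)} \geq 1$ the left-hand side of \eqref{EQ-keyboundednorm} is automatically bounded by $1 \leq V_0$, so I may assume $\|v^{-1}\chi_Q\|_{h'(\cdot)} \leq 1$. The inclusion $B_Q \subseteq Q$ then also gives $\|v^{-1}\chi_{B_Q}\|_{h'(\cdot)} \leq \|v^{-1}\chi_Q\|_{h'(\cdot)} \leq 1$.

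Next I would compare the two exponents appearing in \eqref{delta}. From $Q \subseteq C_d B_Q$ one has $(p_i)_-(Q) \geq (p_i)_-(C_d B_Q)$ for each $i$, whence $\tfrac{1}{\beta(Q)} \leq \tfrac{1}{\beta(C_d B_Q)}$ and consequently $\delta(Q) \geq \delta(C_d B_Q)$. Therefore
\[
\delta(C_d B_Q) - q(x) \;\leq\; \delta(Q) - q(x) \;\leq\; 0.
\]
Combining the two monotonicity facts — for $a \in (0,1]$ the map $e \mapsto a^e$ is non-increasing and for $e \leq 0$ the map $a \mapsto a^e$ is non-increasing on $(0,1]$ — I chain the inequalities
\[
\|v^{-1}\chi_Q\|_{h'(\cdot)}^{\delta(Q) - q(x)} \;\leq\; \|v^{-1}\chi_{B_Q}\|_{h'(\cdot)}^{\delta(Q) - q(x)} \;\leq\; \|v^{-1}\chi_{B_Q}\|_{h'(\cdot)}^{\delta(C_d B_Q) - q(x)} \;\leq\; V_0,
\]
which is precisely \eqref{EQ-keyboundednorm}.

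The proof is essentially a transfer argument and the hard work is already encoded in Lemma \ref{B-q-relation}; the only real subtlety is bookkeeping of signs in the two monotonicity steps and ensuring that the relevant norms are genuinely at most $1$ in order for those monotonicities to cut the right way. No new log-Hölder estimates or $A_{h(\cdot)}$ manipulations are needed beyond those already invoked to produce $V_0$.
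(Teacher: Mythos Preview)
Your proof is correct and follows essentially the same approach as the paper's: both reduce to Lemma \ref{B-q-relation} via the sandwich $B_Q \subseteq Q \subseteq C_d B_Q$ from Lemma \ref{cubes}, use the inclusion $Q \subseteq C_d B_Q$ to get $\delta(C_d B_Q) \le \delta(Q)$, and chain the two monotonicity steps under the harmless assumption $\|v^{-1}\chi_Q\|_{h'(\cdot)}\le 1$. Your write-up is in fact more explicit than the paper's about why each monotonicity inequality goes in the right direction.
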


\begin{proof}
Without loss of generality, we still assume that $\|v^{-1}\chi_Q\|_{h'(\cdot)}\leq 1$ and $m=2$. 
From Lemma \ref{cubes}, there exist $C>1$, such that $B \subseteq Q \subseteq CB$. 
It follows from the fact that $\delta (CB) \le \delta (Q) \le \delta (B)$ that
$$\left\| {{v^{ - 1}}{\chi _Q}} \right\|_{h'( \cdot )}^{\delta (Q) - q(x)} \le \left\| {{v^{ - 1}}{\chi _B}} \right\|_{h'( \cdot )}^{\delta (Q) - q(x)} \le \left\| {{v^{ - 1}}{\chi _B}} \right\|_{h'( \cdot )}^{\delta (CB) - q(x)} \le {V_0},$$
where the last estimate holds due to Lemma \ref{B-q-relation}.
\end{proof}


\begin{definition}
Let $\eta \in[0,m)$, $\v$ is a weight, and $\mathcal{D}$ is a dyadic grid. Define the multilinear weighted dyadic fractional maximal operator $\M_{\eta, \v}^{\mathcal{D}}$ by
$$
\M_{\eta, \v}^{\mathcal{D}} (\vec{f})(x)=\sup _{\substack{x \in Q \in \mathcal{D}}}{\v(Q)}^{\eta-m} \prod_{i=1}^{m}\int_Q|f_i| \v d\mu.
$$
When $\eta=0, \M_{0, \v}^{\mathcal{D}}=\M_\v^{\mathcal{D}}$, which is a multilinear weighted dyadic maximal operator. Moreover, when $m = 1$, we denote $\M_\v^{\mathcal{D}}$ by $M_{\v}^{\mathcal{D}}$.
When $\v=1, \M_{\eta, \v}^{\mathcal{D}}=:\M_\eta^{\mathcal{D}}$, which is a multilinear dyadic fractional maximal operator.
\end{definition}

The following lemma can guarantee that we always transform a proof involving $\M_{\eta}$ into that for $\M_{\eta}^{\mathcal D_i}$. In case $m=1$,  it was shown in \cite[Proposition 7.9]{Hy2012}. The proof for $m>1$ are similar, which we omit here.
\begin{lemma}\label{Suff_1}
Let $\eta \in[0,m)$, there exists a finite family $\left\{\mathcal{D}_i\right\}_{i=1}^N$ of dyadic grids such that
	$$
	\M_\eta (\vec{f})(x) \approx \sum_{i=1}^N \M_\eta^{\mathcal{D}_i} (\vec{f})(x),
	$$
	where the implicit constants depend only $X$, $\mu$, and $\eta$.
\end{lemma}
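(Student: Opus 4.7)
The plan is to reduce the statement to the existence of a finite family of \emph{adjacent dyadic systems} on $(X,d,\mu)$, as constructed by Hyt\"onen and Kairema, and then compare ball averages to dyadic cube averages of comparable measure. The $m=1$ case (cited as \cite[Proposition 7.9]{Hy2012}) already packages the existence of dyadic grids $\mathcal{D}_1,\ldots,\mathcal{D}_N$ with the following key property: there are constants $C_1,C_2\geq 1$ (depending only on the geometric data of $X$) such that for every ball $B\subseteq X$ one can find an index $i\in\{1,\ldots,N\}$ and a cube $Q\in\mathcal{D}_i$ with $B\subseteq Q$ and $\operatorname{diam}(Q)\leq C_1\operatorname{diam}(B)$. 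Combined with doubling, this upgrades to $\mu(Q)\leq C_2\,\mu(B)$.

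For the direction $\sum_{i=1}^N \M_\eta^{\mathcal{D}_i}(\vec f)(x)\lesssim \M_\eta(\vec f)(x)$, I would fix $i$ and any dyadic cube $Q\in\mathcal{D}_i$ containing $x$. By Lemma \ref{cubes}(5), there is a ball $B_Q=B(x_c(Q),C_d d_0^k)$ with $Q\subseteq B_Q$ and $\mu(B_Q)\lesssim \mu(Q)$ via the doubling property of $\mu$. Then
\[
\mu(Q)^{\eta-m}\prod_{j=1}^{m}\int_Q |f_j|\,d\mu\;\lesssim\;\mu(B_Q)^{\eta-m}\prod_{j=1}^{m}\int_{B_Q}|f_j|\,d\mu\;\leq\;\M_\eta(\vec f)(x),
\]
using $\eta-m<0$ so that $\mu(Q)^{\eta-m}\lesssim \mu(B_Q)^{\eta-m}$. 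Taking the supremum over $Q\ni x$ and then summing over $i$ gives the claimed upper bound.

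For the reverse direction $\M_\eta(\vec f)(x)\lesssim \sum_{i=1}^N \M_\eta^{\mathcal{D}_i}(\vec f)(x)$, I would fix a ball $B\ni x$ and apply the adjacent systems property to obtain $i$ and $Q\in\mathcal{D}_i$ with $B\subseteq Q$ and $\mu(Q)\approx\mu(B)$. Since $x\in B\subseteq Q$, monotonicity of the integrals together with the comparability of measures gives
\[
\mu(B)^{\eta-m}\prod_{j=1}^{m}\int_B|f_j|\,d\mu\;\leq\;\mu(B)^{\eta-m}\prod_{j=1}^{m}\int_Q|f_j|\,d\mu\;\lesssim\;\mu(Q)^{\eta-m}\prod_{j=1}^{m}\int_Q|f_j|\,d\mu\;\leq\;\M_\eta^{\mathcal{D}_i}(\vec f)(x),
\]
and taking the supremum over all $B\ni x$ yields $\M_\eta(\vec f)(x)\lesssim \max_i \M_\eta^{\mathcal{D}_i}(\vec f)(x)\leq \sum_{i=1}^N \M_\eta^{\mathcal{D}_i}(\vec f)(x)$.

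The only genuinely non-trivial input is the adjacent dyadic systems construction, which is already available in the literature and is invoked in the $m=1$ case of the lemma; the extension to $m>1$ requires no new idea because the argument above is performed \emph{pointwise in $x$}, and the product $\prod_{j=1}^m \int_B|f_j|\,d\mu$ behaves monotonically under $B\subseteq Q$ just like a single integral. So the main (only) obstacle is verifying that the same choice of ball-to-cube (or cube-to-ball) works simultaneously for all $m$ factors, which it does since the selection depends only on the geometry of $B$ (or $Q$), not on $\vec f$.
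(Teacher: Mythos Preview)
Your proposal is correct and is precisely the argument the paper has in mind: the paper does not give a proof at all, merely citing \cite[Proposition~7.9]{Hy2012} for the case $m=1$ and stating that the proof for $m>1$ is ``similar, which we omit here.'' You have supplied exactly those omitted details, using the adjacent dyadic systems of Hyt\"onen--Kairema together with doubling to compare ball and cube averages, and your observation that the cube selection depends only on the geometry (not on $\vec f$) is the reason the extension to $m>1$ is immediate.
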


\begin{lemma}[\cite{Cruz2022}, Lemma 4.4]\label{M.eta.bound}
    Let $\mathcal{D}$ is a dyadic grid, $\sigma$ is a weight, and $1<p<\infty$. Then the dyadic maximal operator $M_{\sigma}^{\mathcal{D}}$ is bounded on $L^p(X,\sigma)$, which is also bounded from $L^{1}(X,\sigma)$ to $WL^{1}(X,\sigma)$.
    \end{lemma}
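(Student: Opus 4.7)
The plan is to prove the two bounds by the standard weighted dyadic stopping-time argument, exploiting the fact that the sup defining $M_\sigma^{\mathcal D}$ is normalized by $\sigma$-averages rather than $\mu$-averages, so the entire argument runs in the measure space $(X,\sigma\,d\mu)$.

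First I would establish the weak-type estimate from $L^1(X,\sigma)$ to $WL^1(X,\sigma)$. Fix $\lambda > 0$ and consider the level set $E_\lambda = \{x \in X : M_\sigma^{\mathcal D} f(x) > \lambda\}$. By definition of the dyadic operator, each $x \in E_\lambda$ lies in some $Q \in \mathcal D$ with $\sigma(Q)^{-1}\int_Q |f|\sigma\,d\mu > \lambda$. Using the nestedness property (1) of Lemma \ref{cubes}, the family of all such cubes can be organized into a disjoint collection $\{Q_j\}$ of maximal dyadic cubes; since maximality is with respect to inclusion among dyadic cubes, these $Q_j$ are pairwise disjoint and cover $E_\lambda$. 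Then
\begin{align*}
\sigma(E_\lambda) \le \sum_j \sigma(Q_j) < \sum_j \frac{1}{\lambda}\int_{Q_j}|f|\sigma\,d\mu \le \frac{1}{\lambda}\int_X |f|\sigma\,d\mu = \frac{\|f\|_{L^1(X,\sigma)}}{\lambda},
\end{align*}
which is the desired weak-type inequality.

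Next I would note the trivial $L^\infty \to L^\infty$ bound: for every $x$ and every $Q \ni x$, we have $\sigma(Q)^{-1}\int_Q |f|\sigma\,d\mu \le \|f\|_\infty$, hence $\|M_\sigma^{\mathcal D} f\|_\infty \le \|f\|_\infty$. With weak-(1,1) and strong-($\infty,\infty$) in hand, the boundedness on $L^p(X,\sigma)$ for $1<p<\infty$ follows by the Marcinkiewicz interpolation theorem applied to the $\sigma$-measure space.

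There is no genuine obstacle here: the only subtlety is confirming that one may select \emph{maximal} dyadic cubes in the stopping-time argument, which is guaranteed by the doubling-type property (4) of Lemma \ref{cubes} (each generation is bounded in mass by the parent up to a uniform factor, so the sup over $Q \ni x$ is attained by some largest cube in the collection, or else the $\sigma$-average tends to $0$ on sufficiently large generations, ruling out infinite ascent). Since this is exactly Lemma 4.4 of \cite{Cruz2022}, the result is quoted; the sketch above is for completeness.
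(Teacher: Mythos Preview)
The paper does not prove this lemma; it is quoted directly from \cite{Cruz2022} with no argument given. Your sketch is the standard and correct proof: weak-type $(1,1)$ via maximal dyadic stopping cubes in the $\sigma$-measure, the trivial $L^\infty$ bound, and Marcinkiewicz interpolation, so there is nothing to compare and your proposal is fine.
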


We now present the multilinear fractional Calderón-Zygmund decomposition over the spaces of homogeneous type as follows.
\begin{lemma}\label{CZD}
Let $\eta \in[0,m)$, $\mathcal{D}$ is a dyadic grid on $X$, and $\v \in A_{\infty}$.
Set $\mu(X) = \infty$. If $f_i \in L_{\text{loc}}^1(\v)$ satisfying $\mathop {\lim }\limits_{j \to \infty } \v {\left( {Q_j} \right)^{\eta  - m}}\prod_{i=1}^{m}\int_{{Q_j}} {\left| f_i \right|\v d\mu =0}$ for any nested sequence $\left\{Q_j \in \mathcal{D}\right\}_{j=1}^{\infty}$, where $Q_{j}$ is a child of $Q_{j+1}$, then for any $\lambda > 0$, there exists a (possibly empty) collection of mutually disjoint dyadic cubes $\left\{ {Q_j^{}} \right\}$, called Calderón-Zygmund cubes for $\vec{f}$ at the height $\lambda$, and a constant $C_{CZ} > 1$, which is independent of $\lambda$ and dependent of $\mathcal{D}, X, \v$, such that
$$
X_{\eta, \lambda}^{\mathcal{D}}:=\left\{x \in X: \M_{\eta,\v}^{\mathcal{D}} (\vec{f})(x)>\lambda\right\} =\bigcup_j Q_j .
$$
Moreover, for each $j$,
\begin{align}\label{CZ_1}
\lambda<\v {\left( {{Q_j}} \right)^{\eta  - m}}\prod_{i=1}^{m}\int_{{Q_j}} {\left| f_i \right|\v d\mu } \le C_{C Z} \lambda .
\end{align}
Now, suppose that $\left\{ {Q_j^k} \right\}$ is the Calderón-Zygmund cubes at height $a^k$ for each $k \in \mathbb{Z}$ and $a > C_{CZ}$. These sets, $E_j^k:=Q_j^k \setminus X_{\eta,a^{k+1}}^{\mathcal{D}}$, are mutually disjoint for all indices $j$ and $k$, such that 
\begin{align}\label{sigema}
\left( {1 - {{\left( {\frac{{{C_{cz}}}}{a}} \right)}^{\frac{1}{{m - \eta }}}}} \right)\v (Q_j^k) \le \v (E_j^k) \le \v (Q_j^k).
\end{align}

If set $\mu(X) < \infty$, then Calderón-Zygmund cubes can be established for every function $f_i \in L_{loc}^1(\v)$ at any height $\lambda >\lambda_0:=\prod_{i=1}^{m}\int_X {\left| f_i \right|\v d\mu }$, meanwhile, \eqref{CZ_1} also holds. Under these conditions, the sets $E_j^k$ are pairwise disjoint with \eqref{sigema} holds, for $k > \log_a \lambda_0$.
\end{lemma}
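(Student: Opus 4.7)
My plan is to follow the classical Calderón--Zygmund stopping-time construction, suitably adapted to the multilinear fractional weighted setting. Writing $P_Q := \prod_{i=1}^{m}\int_Q|f_i|\v\,d\mu$, I will consider the stopping family $\mathcal Q_\lambda := \{Q\in\mathcal D : \v(Q)^{\eta-m}P_Q > \lambda\}$ and take $\{Q_j\}$ to be its maximal elements. Maximality exists precisely because the vanishing-on-nested-sequences hypothesis (when $\mu(X)=\infty$), or the restriction $\lambda>\lambda_0$ (when $\mu(X)<\infty$), prevents infinite ascending chains in $\mathcal Q_\lambda$. From the tree property of $\mathcal D$ (Lemma \ref{cubes}(1)) these maximal cubes are automatically disjoint, and they exhaust $X_{\eta,\lambda}^{\mathcal D}$ because for any $x$ in the level set the dyadic cube witnessing $\M_{\eta,\v}^{\mathcal D}(\vec f)(x)>\lambda$ sits in $\mathcal Q_\lambda$ and therefore inside some $Q_j$.

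The lower bound in \eqref{CZ_1} is immediate from $Q_j\in\mathcal Q_\lambda$. For the upper bound I will use that the dyadic parent $\hat Q_j$ fails $\mathcal Q_\lambda$: combining monotonicity $P_{Q_j}\le P_{\hat Q_j}$ with doubling of $d\v$ (available since $\v\in A_\infty$, Lemma \ref{Ainfty}) produces $P_{Q_j}\le \lambda C_\v^{m-\eta}\v(Q_j)^{m-\eta}$, so that $C_{CZ}$ can be taken to be $C_\v^{m-\eta}$. The pairwise disjointness of $\{E_j^k\}$ is then a short check: within a single level the $Q_j^k$ are disjoint, and across levels $k'>k$ the set $Q_i^{k'}$ lies in $X_{\eta,a^{k'}}^{\mathcal D}\subseteq X_{\eta,a^{k+1}}^{\mathcal D}$, which is removed from $E_j^k$ by construction.

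The main obstacle, and what I expect to require the most care, is the measure bound in \eqref{sigema}. First I will verify the identification
\[
Q_j^k \cap X_{\eta,a^{k+1}}^{\mathcal D} \;=\; \bigsqcup_{Q_i^{k+1}\subseteq Q_j^k} Q_i^{k+1},
\]
which follows because maximality of $Q_j^k$ at level $a^k$ rules out any ancestor of $Q_j^k$ belonging to $\mathcal Q_{a^{k+1}}$. Then from $\v(Q_i^{k+1})<a^{-(k+1)/(m-\eta)}P_{Q_i^{k+1}}^{1/(m-\eta)}$, summation, and a combined use of Hölder's inequality with equal exponents $p_l=m$ followed by convexity of $t\mapsto t^{m/(m-\eta)}$ (valid because $m/(m-\eta)\ge 1$), the plan is to obtain
\[
\sum_i P_{Q_i^{k+1}}^{1/(m-\eta)} \;\le\; \prod_{l=1}^{m}\Bigl(\sum_i\int_{Q_i^{k+1}}|f_l|\v\,d\mu\Bigr)^{1/(m-\eta)} \;\le\; P_{Q_j^k}^{1/(m-\eta)}.
\]
Inserting the upper CZ bound $P_{Q_j^k}\le C_{CZ}a^k\v(Q_j^k)^{m-\eta}$ should then give $\sum_i\v(Q_i^{k+1})<(C_{CZ}/a)^{1/(m-\eta)}\v(Q_j^k)$, which is precisely \eqref{sigema}. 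The naive subadditivity argument succeeds only when $\eta=0$ or $m=1$; it is this symmetric-Hölder-plus-convexity step that simultaneously balances the multilinear product with the fractional exponent $1/(m-\eta)$ for all $\eta\in[0,m)$. The finite-measure case proceeds identically once one observes that $\lambda>\lambda_0$ keeps the ambient ball out of $\mathcal Q_\lambda$.
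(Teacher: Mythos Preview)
Your proposal is correct and follows the same stopping-time construction as the paper: take maximal cubes in $\mathcal Q_\lambda$, use the parent's failure to get the upper bound in \eqref{CZ_1} (you via doubling of $d\v$ from $\v\in A_\infty$, the paper via ball containments and Lemma~\ref{LMB.}), and verify disjointness of the $E_j^k$ in the same way. The one substantive divergence is in the proof of \eqref{sigema}. The paper writes
\[
\Bigl(\sum_i \v(Q_i^{k+1})\Bigr)^{m-\eta}\;\le\;\sum_i \v(Q_i^{k+1})^{m-\eta}\;\le\;\frac{1}{a^{k+1}}\sum_i P_{Q_i^{k+1}}\;\le\;\frac{1}{a^{k+1}}P_{Q_j^k},
\]
and then takes the $(m-\eta)$th root. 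But the first inequality is subadditivity of $t\mapsto t^{m-\eta}$, which holds only when $m-\eta\le 1$; for $m\ge 2$ and $\eta<m-1$ that step is false. Your route---summing $\v(Q_i^{k+1})< a^{-(k+1)/(m-\eta)}P_{Q_i^{k+1}}^{1/(m-\eta)}$ directly, then applying H\"older with equal exponents $m$ followed by the superadditivity $\sum_i a_{i,l}^{m/(m-\eta)}\le(\sum_i a_{i,l})^{m/(m-\eta)}$ (valid since $m/(m-\eta)\ge 1$)---covers the full range $\eta\in[0,m)$ and in fact repairs this gap in the paper's argument.
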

\begin{proof}

The first case is that $\mu (X) = \infty $. We only need to consider  $X_{\eta,\lambda}^\mathcal{D} \ne \emptyset.$ Otherwise, we can take $\left\{ {{Q_j}} \right\}$ to be the empty sets.
	
As the property of the dyadic cube in Theorem \ref{cubes}, for every $x \in X_{\eta,\lambda}^\mathcal{D}$, there exists a dyadic cube ${{Q_k^x}}$ of each generation $k > 0$, such that $x \in {{Q_k^x}}$ and $\M_{\eta,\v}^{\mathcal{D}} (\vec{f})(x) > \lambda$. So there exist $k$, such that 
\begin{align}\label{CZ_2}
\v(Q_k^x)^{\eta-m}\prod_{i=1}^{m}\int_{Q_k^x}|f_i| d \v>\lambda.
\end{align}
Since
$
\lim _{k \rightarrow \infty} \v(Q_k^x)^{\eta-m}\prod_{i=1}^{m}\int_{Q_k^x}|f_i| d \v = 0,
$
then there are only finite $k$ such that \eqref{CZ_2} holds.
Select $k$ to be the smallest integer such that \eqref{CZ_2} holds, in this case, we denote the cube with generation $k$ by $Q_x$. What's  more, the set $\left\{ {{Q_x}:x \in X_{\eta, \lambda}^{\mathcal{D}}} \right\}$ can be enumerated as $\left\{ {{Q_j}} \right\}$ due to there are countable dyadic cubes. If ${Q_i} \cap {Q_j} \ne \emptyset $, without loss of generality, we define ${Q_i} \subseteq {Q_j}$. Moreover, by the maximality, ${Q_i} = {Q_j}$.
Thus, the set $\left\{ {{Q_x}:x \in X_{\eta, \lambda}^{\mathcal{D}}} \right\}:=\left\{ {{Q_j}} \right\}$ is countably non-overlapping maximal dyadic cubes.
Hence, $X_{\eta,\lambda}^{\mathcal{D}}\subseteq \bigcup_j Q_j$.

On the other hand, if $z \in Q_x$, for some $x \in X_{\eta, \lambda}^{\mathcal{D}}$, then 
$$
\lambda < \v(Q_x)^{\eta-m}\prod_{i=1}^{m}\int_{Q_x}|f_i| d \v\le \M_{\eta,\v}^{\mathcal{D}} f(z).
$$
Thus, $X_{\eta,\lambda}^{\mathcal{D}}= \bigcup_j Q_j $.

Next, we will prove \eqref{CZ_1}. The left inequality of \eqref{CZ_1} holds since the choice of $Q_j$. For the second inequality, by the maximality of each $Q_j$, we can deduce that its parent ${\tilde Q}_j$ satisfies
$$
\v({\tilde Q}_j)^{\eta-m}\prod_{i=1}^{m}\int_{Q_j}|f_i| d \v \leq \lambda
$$
It follows from Lemmas \ref{cubes} and \ref{LMB.} that
$$
\v(Q_j)^{\eta-m}\prod_{i=1}^{m}\int_{Q_j}|f_i| d \v \leq {\left( {\frac{{\v ( {{{\tilde Q}_j}})}}{{\v \left( {{Q_j}} \right)}}} \right)^{m - \eta }} \lambda 
\leq {\left( {\frac{{\v ( {B( {{x_c}( {{{\tilde Q}_j}}),Cd_0^{k + 1}} )})}}{{\v \left( {B\left( {{x_c}\left( {{Q_j}} \right),d_0^k} \right)} \right)}}} \right)^{m - \eta }} \lambda \leq {( {Cd_0^{{{\log }_2}{C_\mu }}})^{m - \eta }} \lambda.
$$
Consequencely, $\eqref{CZ_1}$ holds.

Setting $a > C_{CZ}$, we define the Calderón-Zygmund cubes $\{Q_j^k\}$ at heights $a^k$ for $k \in \mathbb{Z}$. For simplicity, we define $ X_{\eta,a^k}^{\mathcal{D}} \equiv X_k $. Given $Q_i^{k+1}$ and for any $x \in Q_i^{k+1}$, we have $Q_i^{k+1} \in \{Q_k^x\}$ (defined as above). It follows that there must be an index $j$ for which $Q_i^{k+1} \subseteq Q_j^k$.

Next, we want to show that the $E_j^k$ are pairwise disjoint for all $j,k$. Setting $k_1 \le k_2$, it suffices to prove that $E_{{j_1}}^{{k_1}} \cap E_{{j_2}}^{{k_2}} = \emptyset$ for $E_{{j_1}}^{{k_1}} \ne E_{{j_2}}^{{k_2}}$. If $k_1=k_2$ and $j_1\ne j_2$, then $Q_{{j_1}}^{{k_1}} \cap Q_{{j_2}}^{{k_2}} = \emptyset$ can deduce the desired results. 
If $k_1 < k_2$, then $E_{{j_1}}^{{k_1}} \subseteq {\left( {{X_{{k_1} + 1}}} \right)^c} \subseteq {\left( {{X_{{k_2}}}} \right)^c}$ and $E_{{j_2}}^{{k_2}} \subseteq {X_{{k_2}}}$ can deduce the desired results.

Finally, we will prove that $\v (Q_j^k) \approx \v (E_j^k)$.
It follows obviously that
\begin{align*}
\v {(Q_j^k \cap {X_{k + 1}})^{m - \eta }} &= {\left( {\sum\limits_{i:Q_i^{k + 1} \subseteq Q_j^k} {\v (Q_i^{k + 1})} } \right)^{m - \eta }} \le \sum\limits_{i:Q_i^{k + 1} \subseteq Q_j^k} {{{\left( {\v (Q_i^{k + 1})} \right)}^{m - \eta }}} \\
 &\le \frac{1}{{{a^{k + 1}}}}\sum\limits_{i:Q_i^{k + 1} \subseteq Q_j^k} {\prod_{i=1}^{m}\int_{Q_i^{k + 1}} {\left| f_i \right|d\v } } \le \frac{1}{{{a^{k + 1}}}}\prod_{i=1}^{m}\int_{Q_j^k} {\left| f_i \right|d\v }  \le \frac{{{C_{cz}}}}{a}\v {(Q_j^k)^{m - \eta }}.
\end{align*}
Note that $\v (Q_j^k) = \v (Q_j^k \cap {X_{k + 1}}) + \v (E_j^k)$, then we have
$$( {1 - {{({C_{cz}/a})}^{\frac{1}{{m - \eta }}}}})\v (Q_j^k) \le \v (E_j^k) \le \v (Q_j^k).\qedhere$$
\end{proof}

\section{\bf The proof of Theorems \ref{Cha.Apq}--\ref{mainthm_1}}\label{proof}
\subsection{Necessity}\label{proof1}
~

In this subsection, we aim to prove the necessity of Theorems \ref{Cha.Apq}--\ref{mainthm_1}. 

\begin{proof}
Since ${\A_{\eta ,B}}( {\vec f})(x) \le {\M_\eta }( {\vec f})(x),$ it is sufficent to prove the case of Theorem \ref{W.Cha.Apq}, which means that we want to prove
\begin{align}\label{prove1}
	{\left[ {\vec \omega } \right]_{{A_{\vec p( \cdot ),q( \cdot )}}(X)}}{ \lesssim} \mathop {\left\| {{\A_{\eta,B}}} \right\|_{{L^{{p_1}( \cdot )}}(X,{\omega _1}) \times  \cdots  \times {L^{{p_m}( \cdot )}}(X,{\omega _m}) \to {WL^{q( \cdot )}}(X,\omega )}}.
\end{align}

Now, we supppose that 
$\left\| {{\A_{\eta,B}}} \right\|: = {\left\| {{\A_{\eta,B}}} \right\|_{{L^{{p_1}( \cdot )}}(X,{\omega _1}) \times  \cdots  \times {L^{{p_m}( \cdot )}}(X,{\omega _m}) \to W{L^{q( \cdot )}}(X,\omega )}} < \infty$, which means that 
\begin{align}\label{WB-M}
\mathop {\sup }\limits_{t > 0} {\left\| {t\omega {\chi _{\left\{ {x \in X:{\M_\eta }(\vec{f})(x) > t} \right\}}}} \right\|_{q( \cdot )}} \le {\left\| {\A_{\eta,B}} \right\|}\prod_{i=1}^{m}{\left\| {\omega_i f_i} \right\|_{p_i( \cdot )}}.
\end{align}
Thus, it is sufficient to prove \eqref{prove1} as follows.

Firstly, we claim that for every $B \subseteq X$, 
\begin{align}\label{Nes.ineq.1}
{\left\| {\omega {\chi _B}} \right\|_{q( \cdot )}} < \infty.
\end{align} 
If ${\left\| {\omega {\chi _B}} \right\|_{q( \cdot )}} = \infty$. For any $x \in B$, there exist $E\subseteq B$, such that $x \in E$.
For any $t<\mu {(B)^{\eta  - m}}(\mu (E))^m$, then ${\A_{\eta,B}}(\vec{\chi_E})(x) = \mu {(B)^{\eta  - m}}(\mu (E))^m{\chi _B}(x)>t$. Moreover, it follows from \eqref{WB-M} that
$$
\infty = t{\left\| {\omega {\chi _B}} \right\|_{q( \cdot )}} \le {\left\| {t\omega {\chi _{\left\{ {x \in X:{\A_{\eta,B}}(\vec{\chi_E})(x) > t} \right\}}}} \right\|_{q( \cdot )}} \le {\left\| {\A_{\eta,B}} \right\|}\prod_{i=1}^{m}{\left\| {\omega_i {\chi _E}} \right\|_{p_i( \cdot )}}.
$$
If $\prod_{i=1}^{m}{\left\| {\omega_i {\chi _E}} \right\|_{p_i( \cdot )}} = \infty$, then there exists $i$ such that ${\left\| {{\omega _i}{\chi _E}} \right\|_{{p_i}( \cdot )}} = \infty$, which can produce that $\int_E {{\omega _i}} {( \cdot )^{{p_i}( \cdot )}}d\mu  = \infty$ due to Lemma \ref{p.omega}. Moverover, it follows from Lebesgue differential theorem that  ${\omega _i}(x) = \infty$. Because of the arbitrariness of $x$, then we have ${\left. {{\omega _i}} \right|_B} = \infty$. But this contradicts the definition of the weight.

We next show that $\vec{\omega} \in A_{\vec{p}( \cdot ),q( \cdot )}(X)$. 

{\bf{Case 1: ${\left\| {{\omega_i ^{ - 1}}{\chi _B}} \right\|_{p_i'( \cdot )}}< \infty$.}} 

In this case, due to homogeneity, we assume that \(\left\| \omega_i^{-1} \chi_B \right\|_{p_i'(\cdot)} = 1\). It suffices to prove that
\begin{align}\label{Nes.ineq.2}
\mathop {\sup }\limits_{B \subseteq X} \mu {(B)^{\eta  - m}}{\left\| {\omega {\chi _B}} \right\|_{q( \cdot )}} \lesssim {\left\| {\A_{\eta,B}} \right\|}
\end{align}
We need define two sets
\[ B_0 \equiv \{x \in B\,:\, p_i'(x) < \infty\}, \qquad B_\infty \equiv \{x \in B\,:\,p_i'(x) = \infty\}. \]
According to the definition of the norm, for any \(\lambda \in \left( \frac{1}{2}, 1 \right)\),
$$1 \le {\rho _{p_i'( \cdot )}}\left( {\frac{{{\omega_i ^{ - 1}}{\chi _B}}}{\lambda }} \right) = \int_{{B_0}} {{{\left( {\frac{{\omega_i {{(x)}^{ - 1}}}}{\lambda }} \right)}^{p_i'(x)}}} {\mkern 1mu} d\mu  + {\lambda ^{ - 1}}{\left\| {{\omega_i ^{ - 1}}{\chi _{{B_\infty }}}} \right\|_\infty }.$$
Therefore, since the two right-hand terms in the above equation are at least $1/2$, we claim that one of the following two situations must be true: either ${\left\| {{\omega_i ^{ - 1}}{\chi _{{B_\infty }}}} \right\|_\infty } \geq \frac{1}{2}$, or given $\lambda_0 \in (\frac{1}{2},1)$, then $\int_{B_0}\left(\frac{\omega_i(x)^{-1}}{\lambda}\right)^{p_i'(x)}\,d\mu \geq \frac{1}{2}$ for any $\lambda \in [\lambda_0,1)$.

Consider first that when the first situation holds.
Set 
$s > {\left\| {{\omega_i ^{ - 1}}{\chi _{{B_\infty }}}} \right\|_\infty } = \operatorname{essinf}\limits_{x \in {B_\infty }} \omega_i (x)$, there exists a subset $E \subseteq B_\infty$ with $\mu(E)>0$, such that $\mu {(E)^{ - 1}}\omega_i (E) \le s$. Note that ${p_i( \cdot )}$ is equal to $1$ on $B_\infty$,
then ${\left\| {\omega_i {\chi _E}} \right\|_{p_i( \cdot )}} = \omega_i (E)$.
Then, for all $t < \mu {(B)^{\eta  - m}}(\mu (E))^m$, we have
${\A_{\eta,B}}{(\vec{\chi _E})}(x) \ge \mu {(B)^{\eta  - m}}(\mu (E))^m{\chi _B}(x)>t{\chi _B}(x)$.
Thus, it follows from \eqref{WB-M} that  
$$t{\left\| {\omega {\chi _B}} \right\|_{q( \cdot )}} \le {\left\| {t\omega {\chi _{\left\{ {x \in X:{\A_{\eta,B}}{(\vec{\chi _E})}(x) > t} \right\}}}} \right\|_{q( \cdot )}} \lesssim {\left\| {\A_{\eta,B}} \right\|}\prod_{i=1}^{m}{\left\| {\omega_i {\chi _E}} \right\|_{p_i( \cdot )}} = {\left\| {\A_{\eta,B}} \right\|}\prod_{i=1}^{m}\omega_i (E).$$
Letting $t\to \mu {(B)^{\eta  - m}}(\mu (E))^m$, we get that
$\mu(B)^{\eta-m}\mu(E)^m\left\|\omega \chi_B\right\|_{q(\cdot)} \lesssim {\left\| {\A_{\eta,B}} \right\|}\prod_{i=1}^m \omega_i(E)$.
Then,
\[\mu {(B)^{\eta  - m}}\| \omega {\chi _B}\| _{{q( \cdot )}} \lesssim {\left\| {\A_{\eta,B}} \right\|}\mu {(E)^{- m}}\prod_{i=1}^{m}\omega_i (E) \le{\left\| {\A_{\eta,B}} \right\|} s^m\]
Letting $s \to \parallel {\omega_i ^{ - 1}}{\chi _{{B_\infty }}}\parallel _\infty ^{ - 1}$, we have
\[\mu {(B)^{\eta  - m}}{\left\| {\omega_i {\chi _B}} \right\|_{q( \cdot )}} \lesssim {\left\| {\A_{\eta,B}} \right\|}\left\| {{\omega_i ^{ - 1}}{\chi _{{B_\infty }}}} \right\|_\infty ^{ - m} \le{\left\| {\A_{\eta,B}} \right\|} 2^m,\]
then \eqref{Nes.ineq.2} is valid.

When the second situation holds, we define $B_R = \{x \in B_0\,:\,p_i'(x) < R\}$, for any $R>1$. By Lemma~\ref{Levi}, there exists $R$ that close to $\infty$ sufficiently, such that $\int_{B_R} \left(\frac{\omega_i(x)^{-1}}{\lambda_0}\right)^{p_i'(x)}\,d\mu > \frac{1}{3}.$
It follows from ${\left\| {{\omega_i ^{ - 1}}{\chi _B}} \right\|_{p_i'( \cdot )}} = 1$ and Lemma~\ref{p.omega} that
\begin{align*}
	\int_{B_R}\left(\frac{\omega_i(x)^{-1}}{\lambda_0}\right)^{p_i'(x)}\,d\mu
	\le \int_{B_R}\left(\frac{2}{\lambda_0}\right)^{p_i'(x)}\left(\frac{\omega_i(x)^{-1}}{2}\right)^{p_i'(x)}\,d\mu  \le \left(\frac{2}{\lambda_0}\right)^R < \infty.
\end{align*}
We need to use the following auxiliary function
\[ G(\lambda) = \int_{B_R} \left(\frac{\omega_i(x)^{-1}}{\lambda}\right)^{p_i'(x)}\,d\mu, \]
where $\frac{1}{3} < G(\lambda_0) < \infty$. The Lebesgue dominated convergence theorem can deduce that $G$ is continuous on $[\lambda_0,1]$.

For any $\lambda \in [\lambda_0,1)$, if $G(1) \geq \frac{1}{3}$, by Lemma~\ref{p.omega}, 
\[ \frac{1}{3\lambda} \leq \frac{1}{\lambda}\int_{B_R}\omega_i(x)^{-p_i'(x)}\,d\mu \leq G(\lambda) \leq \lambda^{-R} < \infty. \]
Let $\lambda$ sufficiently close to 1, then $\lambda^{-R} \leq 2$ and
 \begin{align} 
	\frac{1}{3} \leq \int_{B_R}\left(\frac{\omega_i(x)^{-1}}{\lambda}\right)^{p_i'(x)}\,d\mu \leq 2. \label{finally} 
\end{align}
If $G(1) < \frac{1}{3}$, by continuity of $G$, there exists $\lambda \in (\lambda_0,1)$ such that $G(\lambda)=\frac{1}{3}$. Then \eqref{finally} holds for this $\lambda$ as well.

Fixed $\lambda$ and let 
$$f_i(x) = \omega_i {(x)^{ - p_i'(x)}}{\lambda ^{1 - p_i'(x)}}{\chi _{{B_R}}}, \quad \vec{f}=(f_1,\cdots,f_2).$$
Then
\[ \rho_{\pp}(\omega_i f_i) = \int_{B_R}\left(\frac{\omega_i(x)^{-1}}{\lambda}\right)^{p_i'(x)}\,d\mu  \leq 2. \]
By Lemma~\ref{p.omega}, ${\left\| {\omega_i f_i} \right\|_{p_i( \cdot )}} \le 2^{\frac{1}{{{{(p_i')}_ - }}}}$. For any $x \in B$,

\[ {\A_{\eta,B}}(\vec{f})(x) = \mu {(B)^{\eta - m }} \prod_{i=1}^{m}\int_B {f_id\mu}  =  
\lambda^m \mu {(B)^{\eta - m }}\prod_{i=1}^{m}\int_{{B_R}} {{{(\frac{{\omega_i {{(x)}^{ - 1}}}}{\lambda })}^{p_i'(x)}}d\mu} 
\geq (\frac{\lambda }{3})^m\mu {(B)^{\eta  - m}}. \]
For any $t < (\frac{\lambda }{3})^m\mu {(B)^{\eta  - m}}$, it follows from \eqref{WB-M}  that
\[t{\left\| {\omega {\chi _B}} \right\|_{q( \cdot )}} \le {\left\| {t\omega {\chi _{\left\{ {x \in X:{\A_{\eta,B}}(\vec{f})(x) > t} \right\}}}} \right\|_{q( \cdot )}} \le {\left\| {\A_{\eta,B}} \right\|}\prod_{i=1}^{m}{\left\| {\omega_i f_i} \right\|_{p_i( \cdot )}} \le {\left\| {\A_{\eta,B}} \right\|}\prod_{i=1}^{m}{2^{\frac{1}{{{{(p_i')}_ - }}}}}.
\]
Letting $t \to (\frac{\lambda }{3})^m\mu {(B)^{\eta  - m}}$, \eqref{Nes.ineq.2} is valid.

{\bf{Case 2: ${\left\| {{\omega_i ^{ - 1}}{\chi _B}} \right\|_{p_i'( \cdot )}}= \infty$.}}

In this case, we will use the perturbation method to prove.

Given $\epsilon \in (0,1)$, we define
\begin{align*}
    \omega_{i,\epsilon}(x) :=
    \begin{cases}
\omega_i(x),\quad &\omega_i(x)>\epsilon, \\
\epsilon, \quad & \omega_i(x)\leq \epsilon,
    \end{cases}
\end{align*}
with $\omega_{\epsilon}:=\prod_{i=1}^{m}\omega_{i,\epsilon}$. 

Note that ${\omega_{i,\epsilon}}^{-1} \le {\omega_{i}}^{-1} \le \epsilon^{-1}\, a.e.$, then  ${\left\| {{\omega_{i,\epsilon} ^{ - 1}}{\chi _B}} \right\|_{p_i'( \cdot )}} < \infty$. Combining \eqref{WB-M}, it follows that 
\begin{align*}
  t{\left\| {{\omega}{\chi _{\left\{ {x \in X:{\A_{\eta,B}}(\vec{f})(x) > t} \right\}}}} \right\|_{q( \cdot )}} \le {\left\| {\A_{\eta,B}} \right\|} \prod_{i=1}^{m}\left\| {\omega_i f_i}\right\|_{p_i({\cdot})} \le {\left\| {\A_{\eta,B}} \right\|}  \prod_{i=1}^{m}\left\| {\omega_{i,\epsilon} f_i}\right\|_{p_i({\cdot})}.
\end{align*}
This shows that $(\omega_{1,\epsilon},\cdots,\omega_{m,\epsilon},\omega)$ satisfies \eqref{WB-M} (Here we need to explain that in fact, in the proof process before {\bf{Case 2}}, we do not use the condition that $\omega  = \prod\limits_{i = 1}^m {{\omega _i}}$, so we can also get the results produced in {\bf{Case 1}} for this multiple weight $(\omega_{1,\epsilon},\cdots,\omega_{m,\epsilon},\omega)$).

Since ${\left\| {{\omega_{i,\epsilon} ^{ - 1}}{\chi _B}} \right\|_{p_i'( \cdot )}} < \infty$, it follows from \eqref{Nes.ineq.2} that $$ \mu {(B)^{\eta  - m}}{\left\| {\omega{\chi _B}} \right\|_{q( \cdot )}}\prod_{i=1}^{m}{\left\| {{\omega_{i,\epsilon} ^{ - 1}}{\chi _B}} \right\|_{p_i'( \cdot )}} \lesssim {\left\| {\A_{\eta,B}} \right\|}.$$

The desired result that ${\left[ \vec{\omega}  \right]_{{A_{\vec{p}( \cdot ),q( \cdot )}}}(X)} \lesssim {\left\| {\A_{\eta,B}} \right\|}$ follows instantly from Lemma \ref{lemma:fatou}, since $\mathop {\lim }\limits_{\varepsilon  \to 0} \omega _{i,\varepsilon } = \omega _i$ a.e..
This finishes the necessity.
\end{proof}

\medskip
{\bf{A concise proof of the necessity for Theorems \ref{Cha.Apq} and \ref{mainthm_1}}}

\begin{proof}
Similar to the previous argument, it is enough to prove that
the necessity of Theorem \ref{Cha.Apq}.
	
By Lemma \ref{lemma:dual}, there exists ${{h_j}}$, such that ${\left\| {{\omega _j}{h_j}} \right\|_{{{{p_j}( \cdot )}}}} \le 1,j=1,\ldots,m$. Then we have 
	\begin{align*}
	&{\mu(B)^{ \eta- m}}{\left\| {\omega {\chi _B}} \right\|_{{{q( \cdot )}}}}\prod\limits_{i = 1}^m {{{\left\| {\omega _i^{ - 1}{\chi _B}} \right\|}_{{{{p_i}^\prime ( \cdot )}}}}}\\
	\lesssim&_{\vec{p}(\cdot)} {\left\| {\omega {\chi _B}} \right\|_{{L^{q( \cdot )}}}}{\mu(B)^{ \eta- m }}\prod\limits_{i = 1}^m {\int_B {{h_i}} d\mu}\\
	\le& {\left\| {{\A_{\eta ,B}}} \right\|_{{L^{{p_1}( \cdot )}}(X,{\omega _1}) \times  \cdots  \times {L^{{p_m}( \cdot )}}(X,{\omega _m}) \to {L^{q( \cdot )}}(X,\omega )}}\prod\limits_{i = 1}^m {{{\left\| {{h_i}} \right\|}_{{L^{{p_i}( \cdot )}}(X,{\omega _i})}}}.
	\end{align*}
	Thus,
	$${\left[ {\vec \omega } \right]_{{A_{\vec p( \cdot ),q( \cdot )}}}}{ \lesssim _{\vec p(\cdot)}}{\left\| {{\A_{\eta ,B}}} \right\|_{{L^{{p_1}( \cdot )}}(X,{\omega _1}) \times  \cdots  \times {L^{{p_m}( \cdot )}}(X,{\omega _m}) \to {L^{q( \cdot )}}(X,\omega )}}.\qedhere$$
\end{proof}

\subsection{Sufficiency}\label{proof2}
~

The aim of this subsection is to prove the sufficiency of Theorem \ref{mainthm_0}, which implies the sufficiency of the other theorems.
We will address the case for $\mu(X) < \infty$ at the end of this subsection. We firstly consider the case for $\mu(X) = \infty$.

{\bf{Case 1: $\mu(X) = \infty$.}} 

We first simplify some details with four steps.

{\bf{Step 1.}} 
Lemma \ref{Suff_1} implies that to establish the boundedness of $\M_\eta$, it is sufficient to demonstrate the boundedness of $\M_\eta^{\mathcal{D}}$.  


{\bf{Step 2.}} 
For \(\pp \in \P_0\) and a given weight \(v\), we define \(\Lp_v(X)\) as the quasi-Banach function space with the norm
$${\left\| f \right\|_{L_v^{p( \cdot )}(X)}}: = {\left\| {{v^{\frac{1}{{p( \cdot )}}}}f} \right\|_{L_{}^{p( \cdot )}(X)}}.$$ The norm has many of the same basic properties as the $\Lp(X)$ norm.

	Let $u(\cdot)=\omega(\cdot)^{q(\cdot)}$ and
	$\sigma_l(\cdot)=\omega_l(\cdot)^{-p_l'(\cdot)}$, $l=1,\dots,m$. 
    According to Lemma \ref{Ap_Ainfty_} and Lemma \ref{Ainfty},  $u(\cdot)$ and $\sigma_l(\cdot)$ are both in $A_{\infty}(X)$ and satisfy the doubling property.

	\[   (\omega_{l}(x)\sigma_{l}(x))^{p_{l}(x)}
	=(\omega_{l}(x)^{p_{l}^{\prime}(x)-1})^{-p_{l}(x)}
	= \omega_{l}(x)^{-p_{l}^{\prime}(x)}
	= \sigma_{l}(x).
	\]
	Therefore,
	$${\left\| {{\M_\eta^{\mathcal{D}}}({f_1}{\sigma _1}, \ldots ,{f_m}{\sigma _m})} \right\|_{L_u^{q( \cdot )}(X)}} = {\left\| {{\M_\eta^{\mathcal{D}}}({f_1}{\sigma _1}, \ldots ,{f_m}{\sigma _m})\omega } \right\|_{L_{}^{q( \cdot )}(X)}},$$
	and for $l=1,\dots,m$,
	\begin{align*}
	\|f_{l}\|_{L_{\sigma_{l}}^{p_{l}(\cdot)}(X)}
	= \|f_{l}\sigma_{l}\omega_{l}\|_{L^{p_{l}(\cdot)}(X)}.  
	\end{align*}     
	Thereby, it suffices to prove that 
	\begin{align*}\label{EQ-boundenessofM2}
	\|{\M_\eta^{\mathcal{D}}}(f_{1}\sigma_{1},\ldots,f_{m}\sigma_{m})\|_{\Lq_{u}(X)}
	\lesssim \prod\limits_{i = 1}^m {{{\left\| {{f_i}} \right\|}_{L_{{\sigma _i}}^{{p_i}( \cdot )}(X)}}},
	\end{align*}
	since we can replace $f_{l}$ by $f_{l}/\sigma_{l}$, $l=1,\ldots,m$.

{\bf{Step 3.}} By the homogeneity, it suffices to consider that $f_l$ is a nonnegative function with \(\|f_l\|_{L^{p_l(\cdot)}_{\sigma_{l}}(X)}=1\), for each \(l=1, \ldots, m\), it follows from Lemma \ref{p.omega} that
	$$\int_{X} | {f_l}(x){|^{{p_l}(x)}}{\sigma _l}(x){\mkern 1mu} d \mu \le 1.$$
	Therefore, it is sufficient to
demonstrate that
\begin{align*}
\|{\M_\eta^\mathcal{D}}(f_{1}\sigma_{1},\dots,f_{m}\sigma_{m})\|_{\Lq_{u}}\lesssim 1.
\end{align*}
From Lemma \ref{p.omega}, it is enough to prove
\begin{equation}\label{modular_vertion_Maxboundeness}
\int_{X}{\M_\eta^\mathcal{D}}(f_{1}\sigma_{1},\ldots,f_{m}\sigma_{m})^{q(x)}u(x)\,d \mu \lesssim 1.
\end{equation}

{\bf{Step 4.}} We also need to show that for any nested sequence $\left\{Q_k\in \mathcal{D}\right\}_{k=1}^{\infty}$ with $Q_{k}$ is a child of $Q_{k+1}$,
\begin{equation}\label{Suff.lim}
\mathop {\lim }\limits_{k \to \infty } \mu {({Q_k})^{\eta  - m}}\prod_{i=1}^{m}\int_{{Q_k}} f_i \sigma_i d\mu  = 0,
\end{equation}
which can guarantee us to use Lemma \ref{CZD}.

Indeed, since $u$ is doubling, if we fix a sequence with $k=1$, then 
\begin{equation*}
    u\left(Q_1\right) \leq u\left(B\left(x_c\left(Q_1\right), C_d d_0\right)\right) \leq C_u^{\log _2 C_d} u\left(B\left(x_c\left(Q_1\right), d_0\right)\right).
\end{equation*}
By Lemma \ref{cubes}, for any $k$, with the similar argument, we have
$$
\frac{1}{u\left(Q_k\right)} \lesssim \frac{1}{u\left(B\left(x_c\left(Q_k\right), C_d d_0^k\right)\right)}.
$$
Using lemma \ref{Ainfty} and combining above two estimates, we get
$$
\frac{u\left(Q_1\right)}{u\left(Q_k\right)} \lesssim \frac{u\left(B\left(x_c\left(Q_1\right), d_0\right)\right)}{u\left(B\left(x_c\left(Q_k\right), C_d d_0^k\right)\right)} \lesssim \left(\frac{\mu\left(B\left(x_c(Q_1), d_0\right)\right.}{\mu\left(B\left(x_c\left(Q_k\right), C_d d_0^k\right)\right)}\right)^\delta.
$$
If we rearrange and apply Lemma \ref{LMB.} (the lower mass bound), then
$$\mu\left(B\left(x_c\left(Q_1\right), C d_0^k\right)\right)^\delta \lesssim \mu\left(B\left(x_c\left(Q_k\right), C_d d_0^k\right)\right)^\delta \lesssim u\left(Q_k\right).$$
By continuity of $\mu$ and the fact that $X =\mathop {\lim }\limits_{k \to \infty } B\left( {{x_c}\left( {{Q_1}} \right),Cd_0^k} \right)$, we have $\mathop {\lim }\limits_{k \to \infty } u \left( {{Q_k}} \right)=\infty$. 

By the definition of $A_{\vec{p}(\cdot),q(\cdot)}$, Lemma \ref{Gu6}, Lemma \ref{Holder}, and Lemma \ref{p.omega},
\begin{align*}
    \mu {\left( {{Q_k}} \right)^{\eta  - m}}\prod_{i=1}^{m}\int_{{Q_k}} {f_i \sigma_i d\mu }  
    &\lesssim \mu {\left( {{Q_k}} \right)^{\eta  - m}} \prod_{i=1}^{m} \|f_i\sigma_i^{\frac{1}{p_i(\cdot)}}\|_{p_i(\cdot)} \| {\sigma_i^{\frac{1}{p'_i(\cdot)}} {\chi _{{Q_k}}}} \|_{p'_i( \cdot )}\\
    &\lesssim \mu {\left( {{Q_k}} \right)^{\eta  - m}} \prod_{i=1}^{m}\|\omega_i^{-1}\chi_Q\|_{p'_i(\cdot)}\\
    & \lesssim {\left[ \vec{\omega}  \right]_{{A_{\vec{p}( \cdot ),q( \cdot )}}}}\left\| {\omega {\chi _{{Q_k}}}} \right\|_{q( \cdot )}^{ - 1}\\
    &\lesssim \left\| {\omega {\chi _{{Q_k}}}} \right\|_{q( \cdot )}^{ - 1} . 
\end{align*}
Since Lemma \ref{p.omega} implies $\mathop {\lim }\limits_{k \to \infty } u\left( {{Q_k}} \right) = \mathop {\lim }\limits_{k \to \infty } \left\| {\omega {\chi _{{Q_k}}}} \right\|_{q( \cdot )}^{} = \infty$, $\eqref{Suff.lim}$ is valid.

Now, we begin to estimate.

Without loss of generality, we merely need to prove the situation when $m=2$.
Define the functions
\begin{align*}
h_{1}=f_{1}\chi_{\lbrace f_{1}>1\rbrace}, \;
 h_{2}=f_{1}\chi_{\lbrace f_{1}\leq 1 \rbrace}, 
h_{3}=f_{2}\chi_{\lbrace f_{2}>1\rbrace},\;
 h_{4}=f_{2}\chi_{\lbrace f_{2} \leq 1\rbrace},
\end{align*}
and for brevity define
\[     \rho(1)= 1, \quad \rho(2)= 1, \quad
\rho(3)= 2, \quad \rho(4)= 2.  \]
Then,
\begin{align}\notag
&\int_X \M_\eta^\mathcal{D}\left( f_{1}\sigma_{1},
f_{2}\sigma_{2}\right)(x)^{q(x)}u(x)\,d \mu\\ \notag
\le&
\int_X \M_\eta^\mathcal{D}\left( h_{1}\sigma_{1}, h_{3}\sigma_{2}\right)(x)
^{q(x)}u(x)\,d \mu
+\int_X \M_\eta^\mathcal{D}\left( h_{1}\sigma_{1}, h_{4}\sigma_{2}\right)(x)
^{q(x)}u(x)\,d \mu\\ \notag
&+ 
\int_X \M_\eta^\mathcal{D}\left( h_{2}\sigma_{1},
h_{3}\sigma_{2}\right)(x)^{q(x)}u(x)\,d \mu 
+ \int_X \M_\eta^\mathcal{D}\left( h_{2}\sigma_{1}, h_{4}\sigma_{2}\right)(x)
^{q(x)}u(x)\,d \mu\\ \label{similar_2}
=:& I_{1}+I_{2}+I_{3}+I_{4},
\end{align}
where we note that $I_2$ is similar to $I_3$.

Let $k\in \mathbb{Z}$ and $a>C_{C Z}>1$, and we define
$$
X_k=\left\{ {x \in X:{\M_{\eta}^\d}({f_1}{\sigma _1}, \cdots ,{f_m}{\sigma _m})(x) > {a^k}} \right\} .
$$
Since $f_i \in L_{ {loc }}^1$ and $\mathop {\lim }\limits_{k \to \infty } \mu {({Q_k})^{\eta  - m}}\prod_{i=1}^{m}\int_{{Q_k}} f_i \sigma_id\mu  = 0$, then by Lemma \ref{CZD}, \({X_k} = \bigcup_{j \in \mathbb{Z}} {Q_j^k}\), where \(\left\{Q_j^k\right\}_j\) is a family of non-overlapping maximal dyadic cubes with the property that
\begin{align}\label{similar_}
    a^k < \mu(Q_j)^{\eta - m} \prod_{i=1}^{m}  \int_{Q_j} f_i \sigma_i \, d\mu  \leq C_{CZ} a^k < a^{k+1}.
\end{align}
Furthermore, the sets \(E_j^k = \Q \setminus X_{k+1}\) are each disjoint for any $k,j$. 
For any $x \in X$, it is follow obviously that 
\begin{align}\label{zdkz_}
    \M_{\eta}^{\d} (f_1\sigma_1,\cdots,f_m\sigma_m)(x) \approx \sum_{k,j}\mu(\Q)^{\eta} \prod_{i_1}^{m} \langle f_i\sigma_i\rangle_{\Q} \chi_{E^k_j}(x).
\end{align}
Since $\mu\left(Q_j^k\right) \approx \mu\left(E_j^k\right)$ and $u,\sigma_l \in A_{\infty}(X)$, by Lemma \ref{CZD} and Lemma \ref{Ainfty}, we obtain $u\left(Q_j^k\right) \approx u\left(E_j^k\right)$ and
$\sigma_l\left(Q_j^k\right) \approx \sigma_l\left(E_j^k\right)$, for $l=1,2$. 


\textbf{Estimate for $I_1$:}
\begin{align}
I_1 
 \label{case2_1}
&\lesssim \sum_{k,j}\int_{E_{j}^{k}}\prod_{l=1,3}
\bigg(\int_{\Q}{h_{l}\sigma_{\rho(l)}d \mu}\bigg)^{q(x)}
\mu(Q_j^k)^{(\eta-2)q(x)}u(x)\,d\mu.
\end{align}   
Obviously,
\begin{equation}\label{Bound_function_geq1}
\int_{\Q}h_1(y)\sigma_{1}(y)\,d \mu
\leq \int_\subRn f_1(y)^{p_1(y)}\sigma_1(y)\,d \mu \leq 1. 
\end{equation}
The above estimate is also valid for $h_3$. 

First, suppose that
\begin{equation}\label{EQ-I1estimte}
\int_{E_{j}^{k}}\prod_{l=1,3}
\mu(Q_j^k)^{(\eta-2)q(x)}\sigma_{\rho(l)}(\Q)^{\delta(\Q)}u(x)\,d \mu
\lesssim \left(\sigma_{1}(\Q)^{\frac{1}{(p_{1})_{-}(\Q)}}
\sigma_{2}(\Q)^{\frac{1}{(p_{2})_{-}(\Q)}}\right)^{\delta(\Q)}.
\end{equation}

Next, by \eqref{delta}, for almost all $x\in \Q$, we have that
$\delta(\Q)\leq q_{-}(\Q)\leq q(x)$. Thus,
\begin{align*}
I_{1} 
&\leq  \sum_{k,j}\int_{E_{j}^{k}}
\prod_{l=1,3}\bigg(\int_{\Q}h_{l}(y)\sigma_{\rho(l)}(y)\,d \mu \bigg)
^{\delta(\Q)}\mu(Q_j^k)^{(\eta-2)q(x)}u(x)\,d \mu\\
& \leq \sum_{k,j}\int_{E_{j}^{k}}
\prod_{l=1,3}\bigg(\frac{1}{\sigma_{\rho(l)}(\Q)}
\int_{\Q}h_{l}(y)^{\frac{p_{l}(y)}{(p_{l})_{-}(\Q)}}\sigma_{\rho(l)}(y)\,d \mu 
\bigg)^{\delta(\Q)} \\
& \qquad \qquad \times 
\sigma_{\rho(l)}(\Q)^{\delta(\Q)}\mu(Q_j^k)^{(\eta-2)q(x)}u(x)\,d \mu.
\end{align*}
Using Hölder's inequality with the measure \(\sigma_{\rho(l)} \, d \mu\) for \(l=1, 3\),
\begin{align}
&\bigg(({\sigma_{\rho(l)}(\Q)})^{-1}
\int_{\Q}h_{l}(y)^{\frac{p_{l}(y)}{(p_{l})_{-}(\Q)}}
\sigma_{\rho(l)}(y)\,d \mu\bigg)^{\delta(\Q)} \notag\\
\le& \bigg(({\sigma_{\rho(l)}(\Q)})^{-1}
\int_{\Q}h_{l}(y)^{\frac{p_{l}(y)}{(p_{l})_{-}}}\sigma_{\rho(l)}(y)\,d \mu
\bigg)^{(p_{l})_{-}\frac{\delta(\Q)}{(p_{l})_{-}(\Q)}}
= \langle h_{l}^{\frac{p_{l}(\cdot)}{(p_{l})_{-}}}
\rangle_{\sigma_{\rho(l)},\Q}^{(p_{l})_{-}\frac{\delta(\Q)}{(p_{l})_{-}(\Q)}}.\label{EQ-estimateHölder}
\end{align}
%

%
%
It follows from \eqref{EQ-estimateHölder}, \eqref{EQ-I1estimte}, and Young's inequality that
\begin{align}
I_1 
\nonumber &\lesssim  \sum_{k,j}\left(\prod_{l=1,3}
\langle h_{l}^{\frac{p_{l}(\cdot)}{(p_{l})_{-}}}
\rangle_{\sigma_{\rho(l)},\Q}^{(p_{l})_{-}\frac{\beta(\Q)}{(p_{l})_{-}(\Q)}}
\sigma_{\rho(l)}(\Q)^{\frac{\beta(\Q)}{(p_{l})_{-}(\Q)}}\right)^{\frac{{\delta (\Q)}}{{\beta (\Q)}}}\\
\label{eqn:final-I1-est}     &\lesssim \sum_{k,j}\left(\sum_{l=1,3}
\langle h_{l}^{\frac{p_{l}(\cdot)}{(p_{l})_{-}}}
\rangle_{\sigma_{\rho(l)},\Q}^{(p_{l})_{-}}\sigma_{\rho(l)}( Q_{j}^{k})\right)^{\frac{{\delta (\Q)}}{{\beta (\Q)}}}\\
\nonumber  &\lesssim  \sum_{\theta=1,c}\left(\sum_{k,j}\sum_{l=1,3}
\langle h_{l}^{\frac{p_{l}(\cdot)}{(p_{l})_{-}}}
\rangle_{\sigma_{\rho(l)},\Q}^{(p_{l})_{-}}\sigma_{\rho(l)}(
E_{j}^{k})\right)^\theta.\\
\intertext{ By 
Lemma~\ref{M.eta.bound}, since $(p_l)_->1$, the sum}
\nonumber     &\leq \sum_{\theta=1,c}\left(\sum_{l=1,3} \int_{\subRn}
M_{\sigma_{\rho(l)}}^{\mathcal{D}}(h_{l}^{\frac{p_{l}(\cdot)}{(p_{l})_{-}}})(x)
^{(p_{l})_{-}}\sigma_{\rho(l)}(x)\,d \mu\right)^\theta\\
\nonumber     & \lesssim \sum_{\theta=1,c}\left(\sum_{l=1,3} \int_{\subRn}
h_{l}(x)^{p_{l}(x)}\sigma_{\rho(l)}(x)\,d \mu\right)^\theta\\
\nonumber     & \lesssim 1,
\end{align}
where $c={c_{\eta,p_1(\cdot),p_2(\cdot)}}\ge1$, and it is independent  on $\Q$.

Last, we need to verify \eqref{EQ-I1estimte}. By rearrangement,
\begin{align*}
& \int_{E_{j}^{k}}\prod_{l=1,3}\mu(Q_j^k)^{({\eta - 2})q(x)}\sigma_{\rho(l)}(\Q)^{\delta(\Q)}u(x)\,d \mu \\
&\leq  \prod_{l=1,3}
\bigg(\frac{\sigma_{\rho(l)}(\Q)}
{\|\omega_{\rho(l)}^{-1}\chi_\Q\|_{p_{\rho(l)}^{\prime}(\cdot)}}
\bigg)^{\delta(\Q)}\\
&\times
\int_{\Q}\bigg(\prod_{l=1,3}
\|\omega_{\rho(l)}^{-1}\chi_\Q\|_{p_{\rho(l)}^{\prime}(\cdot)}^{\delta(\Q)-q(x)}\bigg)
\bigg(\prod_{l=1,3}
\mu(Q_j^k)^{({\eta - 2})q(x)}\|\omega_{\rho(l)}^{-1}\chi_{\Q}\|_{p_{\rho(l)}^{\prime}(\cdot)}^{q(x)}
u(x)\bigg)\,d \mu.
\end{align*}      

It suffices to prove that for \(l=1,\,2\),
\begin{align}
\int_{\Q}\prod_{l=1 }^{2}
\|\omega_{l}^{-1}\chi_{\Q}\|_{p_{l}^{\prime}(\cdot)}^{q(x)}
\mu(Q_j^k)^{(\eta - 2)q(x)}u(x)\,d \mu \lesssim& 1.\label{eqn:modular-Ap}\\ 
\bigg(\frac{\sigma_{l}(\Q)}
{\|\omega_{l}^{-1}\chi_{\Q}\|_{p_{l}^{\prime}(\cdot)}}\bigg)^{\delta(\Q)}
\lesssim&  \sigma_{l}(\Q)^{\frac{\delta(\Q)}{(p_{l})_{-}(\Q)}}\label{EQ-I11estimate}\\
\|\omega_{l}^{-1}\chi_{\Q}\|_{p_{l}^{\prime}(\cdot)}^{\delta(\Q)-q(x)}
\lesssim& 1.\label{EQ-I12estimate}
\end{align}

Firstly, \eqref{eqn:modular-Ap} follows from the $A_{\vec{p}(\cdot),q(\cdot)}(X)$ condition and Lemma \ref{p.omega}.

Next, we proceed to prove \(\eqref{EQ-I12estimate}\). Assume that
$
\|\omega_{l}^{-1}\chi_{\Q}\|_{p_{l}^{\prime}(\cdot)}
\le 1,
$
otherwise, there is nothing to prove. 
By Lemma \ref{q-relation} and Lemma \ref{vweight4}, 
$$\left\| {{\omega _l}^{ - 1}{\chi _{Q_j^k}}} \right\|_{{{p_l}^\prime ( \cdot )}}^{\delta (Q) - q(x)} = \left\| {{\omega _l}^{ - \frac{1}{m}}{\chi _{Q_j^k}}} \right\|_{{m{p_l}^\prime ( \cdot )}}^{m(\delta (Q) - q(x))} \lesssim 1.$$
Then \eqref{EQ-I12estimate} is obvious.

Last, we prove~\eqref{EQ-I11estimate} as follows. Suppose that
$\|\omega_{l}^{-1}\chi_{\Q}\|_{p_{l}^{\prime}(\cdot)} >1$. Then, by invoking Lemma~\ref{p.omega} and considering that \((p_l^{\prime})_{\pm}(\Q) = (p_{l})_{\mp}(\Q)^{\prime}\), it follows that
\begin{equation*}
\bigg(\frac{\sigma_{l}(\Q)}
{\| \omega_{l}^{-1}\chi_\Q\|_{p_{l}^{\prime}(\cdot)}}\bigg)^{\delta(\Q)}
\leq \bigg( \sigma_{l}(\Q)
^{\frac{(p_{l})_{-}(\Q)^{\prime}-1}{(p_{l})_{-}(\Q)^{\prime}}} 
\bigg)^{\delta(\Q)}
=\sigma_{l}(\Q)^{\frac{\delta(\Q)}{(p_{l})_{-}(\Q)}}.
\end{equation*}
If $\|\omega_{l}^{-1}\chi_{\Q}\|_{p_{l}^{\prime}(\cdot)}
\leq 1$, 
\begin{align*}
\frac{\sigma_l(\Q)}{\|\omega_l^{-1}\chi_\Q\|_{p_l'(\cdot)}}
\leq  \sigma_l(\Q)^{\frac{(p_l)_+(\Q)^{'}-1}{(p_l)_+(\Q)^{'}}}  
= &\sigma_l(\Q)^{\frac{1}{(p_l)_+(\Q)}}  \\
= &\sigma_l(\Q)^{\frac{1}{(p_l)_-(\Q)}}
\sigma_l(\Q)^{\frac{1}{(p_l)_+(\Q)}-\frac{1}{(p_l)_-(\Q)}}. 
\end{align*}

Using Lemma~\ref{p.omega} and Lemma~\ref{fracexp},
\begin{align*}
\sigma_l(\Q)^{\frac{1}{(p_l)_+(\Q)}-\frac{1}{(p_l)_-(\Q)}}
&\leq  \|\omega_l^{-\frac{1}{2}}\chi_\Q\|_{2p_l'(\cdot)}
^{[2(p_l')_{-}]\big(\frac{1}{(p_l)_+(\Q)}-\frac{1}{(p_l)_-(\Q)}\big)}\\
&=\|\omega_l^{-\frac{1}{2}}\chi_\Q\|_{2p_l'(\cdot)}
^{[2(p_l')_{-}]\big(1-\frac{1}{(p_l)_+(\Q)^{'}}-1+\frac{1}{(p_l)_-(\Q)^{'}}\big)} \\
&= \|\omega_l^{-\frac{1}{2}}\chi_\Q\|_{2p_l'(\cdot)}
^{[2(p_l')_{-}]\big(\frac{1}{(p_l^{\prime})_+(\Q)}-\frac{1}{(p_l^{\prime})_-(\Q)}\big)} \\
& \le \|\omega_l^{-\frac{1}{2}}\chi_\Q\|_{2p_l'(\cdot)}
^{c[(2p_l^{\prime})_-(\Q)-(2p_l^{\prime})_+(\Q)]}\\
& \lesssim 1.
\end{align*}
Hence, \eqref{EQ-I11estimate} is valid.
This completes the estimate for $I_1$. 
\vspace{0.7cm}

{\bf{Estimate for $I_2$:}}
Initially, we notice that $1, \sigma_l$, and $u$ are in $A_{\infty}$. Considering $\{Q_j^k\}$ as the Calderón-Zygmund dyadic cubes for $f_2$ relative to $\mu$, and selecting a nested tower of cubes $\{Q_{k, 0}\}$, it is observed that the measures $\mu(Q_{k, 0}), \sigma_l(Q_{k, 0})$, and $u(Q_{k, 0})$ all tend towards infinity. We will often use the doubling property for $A_\infty$ in following.

Finding a cube $Q_{k_0, 0} =: Q_0 \in \mathcal{D}_{k_0}$ s.t. $\mu\left(Q_0\right), u\left(Q_0\right) $ and $\sigma\left(Q_0\right)\geq 1$ and fixing a $LH_{\infty}$ base point $x_0=x_c\left(Q_0\right)$, by Lemma \ref{LHxy}. Define $N_0=2 A_0 C_d$ and the sets
\begin{align*}
\mathscr{F} & =\left\{(k, j)\in \mathbb{Z}\times \mathbb{Z}: Q_j^k \subseteq Q_0\right\}; \\
\mathscr{G} & =\left\{(k, j)\in \mathbb{Z}\times \mathbb{Z}: Q_j^k \nsubseteq Q_0 \text { and } d\left(x_0, x_c\left(Q_j^k\right)\right)<N_0 d_0^k\right\}; \\
\mathscr{H} & =\left\{(k, j)\in \mathbb{Z}\times \mathbb{Z}: Q_j^k \nsubseteq Q_0 \text { and } d\left(x_0, x_c\left(Q_j^k\right)\right) \geq N_0 d_0^k\right\}.
\end{align*}

Similar to the estimate for $I_1$, we have
\begin{align}\label{similar_I2}
I_2
\lesssim\sum_{k,j}\int_{E_{j}^{k}}\prod_{l=1,4}
\langle h_{l}\sigma_{\rho(l)}\rangle_{\Q}^{q(x)}{\mu({Q_j^k}) ^{\eta\cdot q(x)}}u(x)\,d \mu \notag   
=\sum_{(k,j)\in\mathscr{F}}
+\sum_{(k,j)\in\mathscr{G}}
+\sum_{(k,j)\in\mathscr{H}}
=: P_1+P_2+P_3.
\end{align}

We estimate each $P_i$ in turn as follows. 

\textbf{Estimate for $P_1$:}
It follows from the fact that $h_{4}\leq 1$ that
\begin{align*}
P_1
& = \sum_{(k,j)\in \mathscr{F}}\int_{E_{j}^{k}}
\prod_{l=1,4}\langle h_{l}\sigma_{\rho(l)}\rangle_{\Q}^{q(x)}{\mu({Q_j^k})^{\eta\cdot q(x)}}u(x)\,d \mu
\\
& \leq \sum_{(k,j)\in\mathscr{F}}\int_{E_{j}^{k}}
\langle h_{1}\sigma_{1}\rangle_{\Q}^{q(x)}\langle
\sigma_{2}\rangle_{\Q}^{q(x)}{\mu({Q_j^k})^{\eta\cdot q(x)}}u(x)\,d \mu \\
& = \sum_{(k,j)\in\mathscr{F}}\int_{E_{j}^{k}}
\bigg(\int_{\Q} h_1\sigma_1d \mu\bigg)^{q(x)}
\sigma_{2}(\Q)^{q(x)-\delta(\Q)}
\sigma_{2}(\Q)^{\delta(\Q)}\mu(\Q)^{(\eta - 2)q(x)}u(x)\,d \mu. \\
\intertext{By \eqref{Bound_function_geq1} and \eqref{EQ-estimateHölder}, the above}
& \leq \sum_{(k,j)\in\mathscr{F}}\int_{E_{j}^{k}}
\bigg(\int_{\Q} h_1\sigma_1d \mu\bigg)^{\delta(\Q)}
\sigma_{2}(\Q)^{q(x)-\delta(\Q)}
\sigma_{2}(\Q)^{\delta(\Q)}\mu(\Q)^{(\eta - 2)q(x)}u(x)\,d \mu \\
& = \sum_{(k,j)\in\mathscr{F}}\int_{E_{j}^{k}}
\langle h_{1}\rangle_{\sigma_{1},\Q}^{\delta(\Q)}
\sigma_{2}(\Q)^{q(x)-\delta(\Q)}\sigma_{1}(\Q)^{\delta(\Q)}\sigma_{2}(\Q)^{\delta(\Q)}
\mu(\Q)^{(\eta - 2)q(x)}u(x)\,d \mu \\
&\leq
\sum_{(k,j)\in\mathscr{F}}\big(\sigma_{2}(\Q)+1\big)^{q_{+}(\Q)-\delta(\Q)}
\langle h_{1}^{\frac{p_{1}(\cdot)}{(p_{1})_{-}}}\rangle_{\sigma_{1},\Q}
^{(p_{1})_{-}\frac{\delta(\Q)}{(p_{1})_{-}(\Q)}}\\
&   \qquad \qquad\times
\int_{E_{j}^{k}}\sigma_{1}(\Q)^{\delta(\Q)}\sigma_{2}(\Q)^{\delta(\Q)}\mu(\Q)^{(\eta - 2)q(x)}u(x)\,d \mu.
\\
\intertext{Define $\delta_- := \mathop {\inf }\limits_Q \delta(Q)$, and it follows from \eqref{EQ-I1estimte} that the above}
&\lesssim 
\big(\sigma_{2}(Q_0)+1\big)^{q_{+}-\delta_-}\sum_{(k,j)\in\mathscr{F}}
\langle h_{1}^{\frac{p_{1}(\cdot)}{(p_{1})_{-}}}
\rangle_{\sigma_{1},\Q}^{(p_{1})_{-}\frac{\delta(\Q)}{(p_{1})_{-}(\Q)}} 
\sigma_{1}(\Q)^{\frac{\delta(\Q)}{(p_{1})_{-}(\Q)}}\sigma_{2}(\Q)^{\frac{\delta(\Q)}{(p_{2})_{-}(\Q)}}.\\
\intertext{Using Young's inequality and Lemma \ref{M.eta.bound}, the above}
&\leq \big(\sigma_{2}(Q_0)+1\big)^{q_{+}-\delta_{-}}\sum_{(k,j)\in
	\mathscr{F}}\left(
\langle h_{1}^{\frac{p_{1}(\cdot)}{(p_{1})_{-}}}\rangle_{\sigma_{1},\Q}^{(p_{1})_{-}}
\sigma_{1}(\Q)+\sigma_{2}(\Q)\right)^{\frac{{\delta (\Q)}}{{\beta (\Q)}}}  \\
&\lesssim  \big(\sigma_{2}(Q_0)+1\big)^{q_{+}-\delta_{-}}\sum_{\theta=1,c}\left(\sum_{(k,j)\in\mathscr{F}}
\left(\langle
h_{1}^{\frac{p_{1}(\cdot)}{(p_{1})_{-}}}\rangle_{\sigma_{1},\Q}^{(p_{1})_{-}}
\sigma_{1}(E_{j}^{k})+\sigma_{2}(E_{j}^{k})\right)\right)^\theta\\
& \lesssim \sum_{\theta=1,c}\left(\sum_{(k,j)\in \mathscr{F}}\int_{E_{j}^{k}}
M_{\sigma_{1}}^{\mathcal{D}}\big(f_{1}^{\frac{p_{1}(\cdot)}{(p_{1})_{-}}}\big)(x)
^{(p_{1})_{-}}\sigma_{1}(x)\,d \mu
+ \sum_{(k,j)\in \mathscr{F}} \sigma_{2}(E_{j}^{k})\right)^\theta\\
& \lesssim \sum_{\theta=1,c}\left(\int_{\subRn} f_{1}(x)^{p_{1}(x)}\sigma_{1}(x)\,d \mu
+ \sigma_{2}(Q_0)\right)^\theta\\
& \lesssim 1,
\end{align*}
where $c={c_{\eta,p_1(\cdot),p_2(\cdot)}}\ge1$, and it is independent  on $\Q$.

\textbf{Estimate for $P_2$:} 
Set $B_j^k= B(x_c(Q_j^k), A_0(C_d+1) N_0 d_0^k)$. For $(k,j) \in \mathscr{G}$, as $Q^k_j \not\subseteq Q_0$, if $x_c(Q^k_j) \in Q_0$, then by Lemma \ref{cubes},
$Q_0 \subseteq Q^k_j \subseteq B^k_j.$
If $x_c(Q^k_j) \not\in Q_0$, noting that $Q_0 \supseteq B(x_0,d_0^{k_0})$, we have
\[ d_0^{k_0} \leq d(x_0,x_c(Q^k_j)) \leq N_0d_0^k. \]
By Lemma \ref{cubes} again, since $x_0 \in B(x_c(Q^k_j),N_0d_0^k)$ and $Q_0 \subseteq B(x_0,C_dd_0^{k_0})$, then for every $x \in Q_0,$
\[ d(x,x_c(Q^k_j)) \leq A_0(d(x,x_0)+d(x_0,x_c(Q^k_j))) \leq A_0(C_dd_0^{k_0}+N_0d_0^k) \leq A_0(C_d+1)N_0d_0^k. \]
Hence, for any $(k,j) \in \mathscr{G}$, $Q_0 \subseteq B^k_j$. Furthermore, $u(B^k_j), \sigma_2(B^k_j) \geq 1$. Note also that by doubling property and Lemma \ref{cubes}, $\mu(Q^k_j) \approx \mu(B^k_j)$, $u(Q^k_j) \approx u(B^k_j)$, and $\sigma_l(\Q) \approx \sigma_l(B^k_j)$ for $l=1,2$. 

Firstly, we claim that the followng facts are both valid.
\begin{align}\label{cd1w}
    \frac{1}{\mu(\Q)^{2-\eta}}\int_{\Q} h_{4}(y)\sigma_{2}(y)\,d \mu \leq c_0
\end{align}
and
\begin{equation}\label{P_infty_Bounded}
    \sigma_{1}(\Q)^{q_{\infty}}\sigma_{2}(\Q)^{q_{\infty}}
    \mu(\Q)^{\left( {\eta - 2} \right)q_{\infty}}u(E_{j}^{k})
    \lesssim \sigma_{1}(\Q)^{\frac{q_{\infty}}{(p_{1})_{\infty}}}
    \sigma_{2}(\Q)^{\frac{q_{\infty}}{(p_{2})_{\infty}}}.
    \end{equation}

We now can  estimate $P_2$.  Using Lemmas~\ref{lemma:p-infty-px} and~\ref{lemma:infty-bound} with \eqref{cd1w} and \eqref{Bound_function_geq1}, there
exists $t>1$ such that
\begin{align*}
P_2 
& =
\sum_{(k,j)\in \mathscr{G}}\int_{E_{j}^{k}}
c_0^{q(x)} \bigg(\int_{\Q}{h_{1}\sigma_{1}d \mu}\bigg)^{q(x)}
\bigg(\frac{c_0^{-1}}{\mu(\Q)^{2 - \eta}}
\int_{\Q}h_{4}\sigma_{2}d \mu\bigg)^{q(x)}u(x)\,d \mu\\
& \lesssim \sum_{(k,j)\in\mathscr{G}} c_0^{q_+}\int_{E_{j}^{k}}
{\bigg(\int_{\Q}h_{1}\sigma_{1}d \mu\bigg)^{q_{\infty}}
	\bigg(\frac{c_0^{-1}}{\mu(\Q)^{{2 - \eta}}}
	\int_{\Q}h_{4}\sigma_{2}d \mu}\bigg)^{q_{\infty}}u(x)\,d \mu\\
& \qquad \qquad \qquad 
+ \sum_{(k,j)\in \mathscr{G}}\int_{E_{j}^{k}}
\frac{u(x)}{(e+d(x,x_0))^{tq_{-}}}\,d \mu \\
&\lesssim \sum_{(k,j)\in \mathscr{G}}
\prod_{l=1,4}\langle h_{l}\rangle_{\sigma_{\rho(l),\Q}}^{q_{\infty}}
\sigma_{1}(\Q)^{q_{\infty}}
\sigma_{2}(\Q)^{q_{\infty}}\mu(\Q)^{\left( {\eta - 2} \right)q_{\infty}}u(E_{j}^{k})+1\\
& \lesssim \left(\sum_{(k,j)\in \mathscr{G}}
\prod_{l=1,4}\langle h_{l}\rangle_{\sigma_{\rho(l)},\Q}^{q_{\infty}}
\sigma_{\rho(l)}(\Q)^{\frac{q_{\infty}}{(p_{\rho(l)})_{\infty}}} + 1\right).
\end{align*}
The last inequality is obtained by \eqref{P_infty_Bounded}.

Secondly, by Lemma~\ref{Gu6} and again by
Lemma~\ref{lemma:p-infty-cond}, we have that
\begin{multline} \label{sigma1_Averange_Bound}
\frac{1}{\sigma_{1}(\Q)}\int_{\Q}{h_{1}(y)\sigma_{1}(y)\,d \mu}
\lesssim 
\sigma_{1}(\Q)^{-1}\|h_{1}\|_{L_{\sigma_{1}}^{p_{1}(\cdot)}}
\|\chi_{\Q}\|_{L_{\sigma_{1}}^{p_{1}^{\prime}(\cdot)}} \\
\leq
\sigma_{1}(\Q)^{-1}\|f\|_{L_{\sigma_{1}}^{p_{1}(\cdot)}} 
\|\omega_{1}^{-1}\chi_{\Q}\|_{\cpap}
\lesssim  \sigma_{1}(\Q)^{\frac{1}{(p_{1}^{\prime})_{\infty}}-1}
\leq \sigma_{1}(Q_0)^{-\frac{1}{(p_{1})_{\infty}}}
\lesssim 1 .
\end{multline}

Due to the fact that 
$\frac{1}{(p_{\infty})}=\frac{1}{(p_{1})_{\infty}}+\frac{1}{(p_{2})_{\infty}},$ it follows from the Young's inequality that
\begin{align}\label{eqn:J2-final-est}
 P_2
 & \lesssim \sum_{(k,j)\in \mathscr{G}}
\left(\langle h_{1}\rangle_{\sigma_{1},\Q}^{(p_{1})_{\infty}}
\sigma_{1}(\Q)\right)^{\frac{{{q_\infty }}}{{{p_\infty }}}}
+ \sum_{(k,j)\in \mathscr{G}}
\left(\langle h_{4}\rangle_{\sigma_{2},\Q}^{(p_{2})_{\infty}}\sigma_{2}(\Q)\right)^{\frac{{{q_\infty }}}{{{p_\infty }}}}
+1;\\
\label{eqn:J2-final-est-2} & \lesssim \left(\sum_{(k,j)\in \mathscr{G}}
\langle h_{1}\rangle_{\sigma_{1},\Q}^{(p_{1})_{\infty}}
\sigma_{1}(E_{j}^{k})
+ \sum_{(k,j)\in \mathscr{G}}
\langle h_{4}\rangle_{\sigma_{2},\Q}^{(p_{2})_{\infty}}
\sigma_{2}(E_{j}^{k}) +1\right)^{\frac{{{q_\infty }}}{{{p_\infty }}}}. \\
\intertext{Due to Lemmas \ref{lemma:p-infty-px}
	and \ref{lemma:infty-bound}, there exists $t>1$ such that the items in parentheses above are bounded by}
\nonumber  & \sum_{(k,j)\in \mathscr{G}}\int_{E_{j}^{k}}
\langle h_{1}\rangle_{\sigma_{1},\Q}^{p_{1}(x)}
\sigma_{1}(x)\,d \mu
+ \sum_{(k,j)\in \mathscr{G}}\int_{E_{j}^{k}}
\frac{\sigma_{1}(x)}{(e+d(x,x_0))^{t(p_{1})_{-}}}\,d \mu\\ 
\nonumber    & \qquad \qquad \qquad 
+\sum_{(k,j)\in \mathscr{G}}
\int_{E_{j}^{k}}{M}_{\sigma_{2}}^{\mathcal{D}}h_{4}(x)^{(p_{2})_{\infty}}
\sigma_{2}(x)\,d \mu +1\\
\nonumber  & \lesssim \sum_{(k,j)\in \mathscr{G}}\int_{E_{j}^{k}}
\langle
h_{1}^{\frac{p_{1}(\cdot)}{(p_{1})_{-}}}\rangle_{\sigma_{1},\Q}^{(p_{1})_{-}}
\sigma_{1}(x)\,d \mu
+ \int_{\subRn}{M}_{\sigma_{2}}^{\mathcal{D}}h_{4}(x)^{(p_{2})_{\infty}}
\sigma_{2}(x)\,d \mu + 1.\\
\intertext{We can use Lemma \ref{M.eta.bound} and then the above is bounded by} 
\nonumber  & \int_{\subRn}
{M}_{\sigma_{1}}^{\mathcal{D}}(h_{1}^{\frac{p_{1}(\cdot)}{(p_{1})_{-}}})(x)^{(p_{1})_{-}}
\sigma_{1}(x)\,d \mu
+ \int_{\subRn}h_{4}(x)^{(p_{2})_{\infty}}\sigma_{2}(x)\,d \mu + 1\\
\nonumber &\lesssim \int_{\subRn} h_{1}(x)^{p_{1}(x)}\sigma_{1}(x)\,d \mu
+ \int_{\subRn}h_{4}(x)^{(p_{2})_{\infty}}\sigma_{2}(x)\,d \mu + 1.\\    
\intertext{It follows instantly from Lemmas \ref{lemma:p-infty-px} and \ref{lemma:infty-bound} that the above is controlled by} 
\nonumber & \int_{\subRn}h_{4}(x)^{p_{2}(x)}\sigma_{2}(x)\,d \mu
+\int_{\subRn}\frac{\sigma_{2}(x)}{(e+d(x,x_0))^{t(p_{2})_{-}}}\,d \mu +1\nonumber \lesssim 1.
\end{align}
Last, the estimate for $P_2$ is finished.

We now prove the previous two facts as follows.

We prove \eqref{cd1w} as follows. By
Lemma \ref{lemma:p-infty-cond} for $\omega_2^{-\frac{1}{2}}\in
A_{2p_2'(\cdot)}$, it follows that

\begin{align*}
    \frac{1}{\mu(Q_j^k)} \approx \frac{1}{\mu(B_j^k)}
\leq \frac{\mu(Q_0)}{\mu(B_j^k)}
&\lesssim \bigg(\frac{\sigma_{2}(Q_0)}
{\sigma_{2}(B_j^k)}\bigg)^{\frac{1}{(1 - \eta)2(p_{2}^{\prime})_{\infty}}} \lesssim \left(\sigma_{2}(B^k_j)^{-\frac{1}{2(p_{2}^{\prime})_{\infty}}}\right)^\frac{1}{1-\eta}\\
&\approx \left(\|\omega_{2}^{-\frac{1}{2}}\chi_{B_j^k}\|_{2\cpbp}^{-1}\right)^{\frac{1}{1-\eta}}
= \left(\|\omega_{2}^{-1}\chi_{B_j^k}\|_{\cpbp}^{-\frac{1}{2}}\right)^{\frac{1}{1-\eta}}.
\end{align*}

Since $\sigma(B^k_j) \geq 1$, by  Lemma \ref{p.omega}, it is easy to see that $\|\omega_{2}^{-1}\chi_{B_j^k}\|_{\cpbp}^{-1} \leq 1$ . We have that
\begin{align}\label{cd2w}
    \frac{1}{\mu(Q_j^k)} \lesssim \left(\|\omega_{2}^{-1}\chi_{B_j^k}\|_{\cpbp}^{-\frac{1}{2}}\right)^{\frac{1}{1-\eta}}  \leq \|\omega_{2}^{-1}\chi_{B_j^k}\|_{\cpbp}^{-\frac{1}{2}} \leq 1
\end{align}
Hence, by Lemma~\ref{Gu6},
\begin{align*}
\frac{1}{\mu(\Q)^{2}}\int_{\Q} h_{4}(y)\sigma_{2}(y)\,d\mu
& \lesssim \|\omega_{2}^{-1}\chi_{\Q}\|_{\cpbp}^{-1}
\int_{\Q} h_{4}(y)\sigma_{2}(y)^{\frac{1}{p_2(y)}}\sigma_{2}(y)^{\frac{1}{p_2'(y)}}\,d\mu\\
& \lesssim \|\omega_{2}^{-1}\chi_{\Q}\|_{\cpbp}^{-1}
\|h_{4}\|_{L_{\sigma_{2}}^{p_{2}(\cdot)}}
\|\chi_{\Q}\|_{L_{\sigma_{2}}^{p_{2}^{\prime}(\cdot)}}\\
& \leq \|\omega_{2}^{-1}\chi_{\Q}\|_{\cpbp}^{-1}
\|f_{2}\|_{L_{\sigma_{2}}^{p_{2}(\cdot)}}
\|\omega_{2}^{-1}\chi_{\Q}\|_{\cpbp}\\
& \leq c_0.
\end{align*}
Therefore, by \eqref{cd2w} and the fact that $\eta \in [0,2)$, \eqref{cd1w} is valid.

Finally, we prove \eqref{P_infty_Bounded} as follows. Since $1 \lesssim \sigma_l(\Q),\, u(\Q) $, it is derived from Lemma \ref{lemma:p-infty-cond} and the definition of $A_{\vec{p}(\cdot),q(\cdot)}(X)$ that

\begin{align*}  
\bigg[\sigma_{1}(\Q)\sigma_{2}(\Q)\bigg]^{q_{\infty}}
& \lesssim 
\bigg( \|\omega_{1}^{-\frac{1}{2}}\chi_{\Q}\|_{2\cpap}^{2(p_{1}^{\prime})_{\infty}}
\|\omega_{2}^{-\frac{1}{2}}\chi_{\Q}\|_{2\cpbp}^{2(p_{2}^{\prime})_{\infty}}\bigg)^{q_{\infty}} \\
& = \bigg(\|\omega_{1}^{-1}\chi_{\Q}\|_{\cpap}^{(p_{1}^{\prime})_{\infty}-1}
\|\omega_{2}^{-1}\chi_{\Q}\|_{\cpbp}^{(p_{2}^{\prime})_{\infty}-1}\bigg)^{q_{\infty}}
\bigg(\prod_{l=1}^{2} \|\omega_{l}^{-1}\chi_{\Q}\|_{p_{l}^{\prime}(\cdot)}
\bigg)^{q_{\infty}}\\
&\lesssim \bigg( \|\omega_{1}^{-1}\chi_{\Q}\|_{\cpap}^{(p_{1}^{\prime})_{\infty}-1}
\|\omega_{2}^{-1}\chi_{\Q}\|_{\cpbp}^{(p_{2}^{\prime})_{\infty}-1}\bigg)^{q_{\infty}}
\bigg(\frac{\mu(\Q)^{2 - \eta}}{\|\omega\chi_{\Q}\|_{\qq}}\bigg)^{q_{\infty}}\\
& \lesssim \bigg(
\sigma_{1}(\Q)^{\frac{(p_{1}^{\prime})_{\infty}-1}{(p_{1}^{\prime})_{\infty}}}
\sigma_{2}(\Q)^{\frac{(p_{2}^{\prime})_{\infty}-1}{(p_{2}^{\prime})_{\infty}}}
\bigg)^{q_{\infty}}\frac{\mu(\Q)^{(2 - \eta)q_{\infty}}}{u(\Q)}\\
&\leq \sigma_{1}(\Q)^{\frac{q_{\infty}}{(p_{1})_{\infty}}}
\sigma_{2}(\Q)^{\frac{q_{\infty}}{(p_{2})_{\infty}}}
\frac{\mu(\Q)^{(2 - \eta)q_{\infty}}}{u(E_{j}^{k})}.
\end{align*}
Thus, \eqref{P_infty_Bounded} is valid by rearrangement.

{\bf{Estimate for $P_3$:}} Firstly, we claim that
\begin{equation}\label{Suff.ineq_16}
    \sup _{x \in Q_j^k} d\left(x_0, x\right) \approx \inf _{x \in Q_j^k} d\left(x_0, x\right),
\end{equation}
where the implicit constant is independent on $Q_j^k$. In our analysis, the validity of inequality \eqref{Suff.ineq_16} will be established through substitution of \( Q_j^k \) with the ball \( A_j^k = N_0^{-1}B_j^k \), which encompasses \( Q_j^k \). For this purpose, we fix a pair \( (k, j) \) within \( \mathscr{H} \) and choose an arbitrary \( x \) from \( A_j^k \). We get that
\begin{align*} d(x,x_0) \leq A_0[d(x,x_c(Q^k_j))+d(x_0,x_c(Q^k_j))] 
\leq& A_0[C_dd_0^k+d(x_0,x_c(Q^k_j))]\\
\leq& \left(A_0+\frac{1}{2}\right)d(x_0,x_c(Q^k_j)). 
\end{align*}
Meanwhile, we have
\begin{align*} d(x_0,x_c(Q^k_j)) \leq A_0[d(x_0,x)+d(x,x_c(Q^k_j))] =& \frac{1}{2}N_0d_0^k + A_0d(x_0,x)\\ 
\leq& \frac{1}{2}d(x_0,x_c(Q^k_j)) + A_0d(x_0,x).
\end{align*}
It follows that
$$
d\left(x_0, x_c\left(Q_j^k\right)\right) \leq 2 A_0 d\left(x_0, x\right) .
$$
Consequently, \eqref{Suff.ineq_16} holds. 

To proceed with the estimate for \( P_3 \), it becomes necessary to partition \( \mathscr{H} \) into two distinct subsets,
$$
\mathscr{H}_1=\left\{(k, j)\in \mathscr{H}: \sigma_2\left(Q_j^k\right) \leq 1\right\}, \quad \mathscr{H}_2=\left\{(k, j)\in \mathscr{H}: \sigma_2\left(Q_j^k\right)>1\right\} .
$$

Initially, we aggregate over \( \mathscr{H}_1 \). Consider \( x_{+} \) within \( \overline{A_j^k} \), chosen such that \( q_{+}(A_j^k) = q(x_{+}) \), a selection made possible by the continuity of \( q(\cdot) \) in \( LH_0 \). Subsequently, in accordance with the \( LH_{\infty} \) criterion and inequality \eqref{Suff.ineq_16}, it holds for almost every \( x \) in \( Q_j^k \) that
\begin{align}
0&\leq  q_{+}\left(Q_j^k\right)-q(x) \leq\left|q\left(x_{+}\right)-q_{\infty}\right|+\left|q(x)-q_{\infty}\right| \notag\\ 
& \leq \frac{C_{\infty}}{\log \left(e+d\left(x_0, x_{+}\right)\right)}+\frac{C_{\infty}}{\log \left(e+d\left(x_0, x\right)\right)} 
 \approx\frac{1}{\log \left(e+d\left(x_0, x\right)\right)}.\label{eqn:p-J3-est}
\end{align}
Last, in the same way, for $l=1,2$ we have that $p_l(\cdot)$ satisfies
\begin{equation} \label{eqn:p2-J3-est}
|(p_{l})_{-}(\Q)-p_{l}(x)|
\lesssim \frac{1}{\log(e+d(x,x_0))}.
\end{equation}
The corresponding sums over $\mathscr{H}_1$ and $\mathscr{H}_2$ are defined as $P_{31}$ and $P_{32}$ respectively.

It follows obviously from Lemmas \ref{lemma:p-infty-px}, \ref{lemma:infty-bound}, and \eqref{eqn:p-J3-est} that
\begin{align*}
&P_{31}:=\sum_{(k,j)\in \mathscr{H}_{1}} \int_{E_{j}^{k}}
\prod_{l=1,4}\langle h_{l}\sigma_{\rho(l)} \rangle_{\Q}^{q(x)}
{\mu({Q_j^k})^{\eta \cdot q(x)}}u(x)\,d \mu \\
& \qquad \lesssim \sum_{(k,j)\in \mathscr{H}_{1}} \int_{E_{j}^{k}}
\prod_{l=1,4}\langle h_{l}\sigma_{\rho(l)}
\rangle_{\Q}^{q_{+}(\Q)} {\mu({Q_j^k})^{\eta \cdot {q_ + }(Q_j^k)}}u(x)\,d \mu
+\sum_{(k,j)\in \mathscr{H}_{1}}\int_{E_{j}^{k}}\frac{u(x)}{(e+d(x,x_0))^{tq_{-}}}\,d \mu\\
& \qquad  \leq \sum_{(k,j)\in \mathscr{H}_{1}}
\int_{E_{j}^{k}}\prod_{l=1,4}\langle h_{l}\sigma_{\rho(l)}
\rangle_{\Q}^{q_{+}(\Q)} {\mu({Q_j^k})^{\eta \cdot {q_ + }(Q_j^k)}}u(x)\,d \mu + 1.
\intertext{Note that the properties of $h_1$ and $h_4$ (see the before), and $\sigma_2(\Q)\leq 1$. It follows immediately from Lemma \ref{lemma:diening}, \eqref{Bound_function_geq1}, and \eqref{EQ-I1estimte} that the above}
&\qquad = \sum_{(k,j)\in \mathscr{H}_{1}} \int_{E_{j}^{k}}
\bigg(\int_{\Q}{h_{1}\sigma_{1}d \mu}\bigg)^{q_{+}(\Q)}
\bigg(\frac{1}{\sigma_{2}(\Q)}\int_{\Q}{h_{4}\sigma_{2}d \mu}\bigg)^{q_{+}(\Q)}\\
& \qquad \qquad \qquad \qquad \times
\mu(\Q)^{(\eta - 2)q_{+}(\Q)}\sigma_{2}(\Q)^{q_{+}(\Q)}u(x)\,d\mu +1; \\
& \qquad \lesssim \sum_{(k,j)\in \mathscr{H}_{1}} 
\langle h_{1} \rangle_{\sigma_{1},\Q}^{\delta(\Q)}
\langle h_{4}\rangle_{\sigma_{2},\Q}^{\delta(\Q)}
\int_{E_{j}^{k}}\mu(\Q)^{(\eta - 2)q(x)}
\sigma_{1}(\Q)^{\delta(\Q)}\sigma_{2}(\Q)^{\delta(\Q)} u(x)\,d\mu + 1\\
& \qquad \lesssim  \left(\sum_{(k,j)\in \mathscr{H}_{1}}
\left(\prod_{l=1,4}\langle h_{l}\rangle_{\sigma_{\rho(l),\Q}}^{\delta(\Q)}\right)
\sigma_{1}(\Q)^{\frac{\delta(\Q)}{(p_{1})_{-}(\Q)}}\sigma_{2}(\Q)^{\frac{\delta(\Q)}{(p_{2})_{-}(\Q)}}
+ 1\right)\\
&\qquad \leq  \left(\sum_{(k,j)\in \mathscr{H}_{1}}
\langle   h_{1}^{\frac{p_{1}(\cdot)}{(p_{1})_{-}(\Q)}} \rangle_{\sigma_{1},\Q}^{\delta(\Q)}
\sigma_{1}(\Q)^{\frac{\delta(\Q)}{(p_{1})_{-}(\Q)}}
\langle h_{4}\rangle_{\sigma_{2},\Q}^{\delta(\Q)}
\sigma_{2}(\Q)^{\frac{\delta(\Q)}{(p_{2})_{-}(\Q)}} + 1\right).\\
\intertext{Reusing the H\"older's inequality and Young's inequality, the above}
&\qquad \leq \left(\sum_{(k,j)\in \mathscr{H}_{1}}
\langle h_{1}^{\frac{p_{1}(\cdot)}{(p_{1})_{-}}}
\rangle_{\sigma_{1},\Q}^{(p_{1})_{-}\frac{\delta(\Q)}{(p_{1})_{-}(\Q)}}
\sigma_{1}(\Q)^{\frac{\delta(\Q)}{(p_{1})_{-}(\Q)}}
\langle h_{4}\rangle_{\sigma_{2},\Q}^{\delta(\Q)}
\sigma_{2}(\Q)^{\frac{\delta(\Q)}{(p_{2})_{-}(\Q)}} +1\right)\\
& \qquad \lesssim \left(\sum_{\theta=1,c}\left(\sum_{(k,j)\in \mathscr{H}_{1}}
\langle
h_{1}^{\frac{p_{1}(\cdot)}{(p_{1})_{-}}}\rangle_{\sigma_{1},\Q}^{(p_{1})_{-}}
\sigma_{1}(\Q)
+ \sum_{(k,j)\in \mathscr{H}_{1}}
\langle h_{4}\rangle_{\sigma_{2},\Q}^{(p_{2})_{-}(\Q)}
\sigma_{2}(\Q) \right)^{\theta}+ 1\right)\\
& \qquad =: \sum_{\theta=1,c}\left(P_{311}+P_{312}\right)^{\theta}+1,
\end{align*}
where $c={c_{\eta,p_1(\cdot),p_2(\cdot)}}\ge1$, and it is independent  on $\Q$.

On the one hand, the estimate for $P_{311}$ is similar to the estimate for $I_1$, we can indeed do as \eqref{eqn:final-I1-est}.
On the other hand, it follows from Lemmas \ref{lemma:p-infty-px}, \ref{lemma:infty-bound}, and \ref{M.eta.bound} that 

\begin{align*}
P_{312}
& \lesssim  \sum_{(k,j)\in \mathscr{H}_{1}} \int_{E_{j}^{k}}
\langle
h_{4}\rangle_{\sigma_{2},\Q}^{(p_{2})_{-}(\Q)}\sigma_{2}(x)\,d \mu \\
&\lesssim \sum_{(k,j)\in \mathscr{H}_{1}}
\int_{E_{j}^{k}}\langle
h_{4}\rangle_{\sigma_{2},\Q}^{(p_{2})_{\infty}}
\sigma_{2}(x)\,d \mu
+ \sum_{(k,j)\in \mathscr{H}_{1}}
\int_{E_{j}^{k}}\frac{\sigma_{2}(x)}{(e+d(x,x_0))^{t(p_{2})_{-}}}\,d \mu \\
&\leq \int_{\subRn}
{M}_{\sigma_{2}}^{\mathcal{D}}h_{4}(x)^{(p_{2})_{\infty}}\sigma_{2}(x)\,d \mu
+
\int_{\subRn}\frac{\sigma_{2}(x)}{(e+d(x,x_0))^{t(p_{2})_{-}}}\,d \mu\\
&\lesssim \int_{\subRn}h_{4}(x)^{(p_{2})_{\infty}}\sigma_{2}(x)\,d \mu + 1\\
& \lesssim \int_{\subRn}h_{4}(x)^{p_{2}(x)}\sigma_{2}(x)\,d \mu
+\int_{\subRn}\frac{\sigma_{2}(x)}{(e+d(x,x_0))^{t(p_{2})_{-}}}\,d \mu + 1\\
& \lesssim 1.
\end{align*}
Thus, the estimate for $P_{31}$ is accomplished.

For estimating $P_{32}$, it is obtained from Lemma \ref{Gu6} that 
\begin{align}
\int_{\Q}{h_{1}\sigma_{1}\,d \mu}\lesssim \|h_{1}\|_{L_{\sigma_{1}}^{\pap}}
\|\chi_{\Q}\|_{L_{\sigma_{1}}^{\cpap}} \label{H2-first-estimate}
\lesssim \|f_{1}\|_{L_{\sigma_{1}}^{\pap}}
\|\omega_{1}^{-1}\chi_{\Q}\|_{\cpap}
\le \|\omega_{1}^{-1}\chi_{\Q}\|_{\cpap}
\end{align}
By the similar methods, we get
\begin{equation}\label{H2-second-estimate}
\int_{\Q}{h_{4}\sigma_{2}\,d \mu}
\leq c_0\|\omega_{2}^{-1}\chi_{\Q}\|_{\cpbp}.
\end{equation}

We next divide: $\mathscr{H}_2$=$\mathscr{H}_{21}  \cup\mathscr{H}_{22}$, where
\[ \mathscr{H}_{21} = \{ (k,j)\in \mathscr{H}_2 : \sigma_1(\Q) \geq 1 \},
\quad
\mathscr{H}_{22} = \{ (k,j)\in \mathscr{H}_2 : \sigma_1(\Q)< 1 \}.  \]
The corresponding sums over $\mathscr{H}_{21}$ and $\mathscr{H}_{22}$ are defined as $P_{321}$ and $P_{322}$ respectively.
It is obtained from \eqref{H2-first-estimate}, \eqref{H2-second-estimate}, and Lemma~\ref{lemma:p-infty-px} that
\begin{align*}
P_{321}:=&\sum_{(k,j)\in \mathscr{H}_{21}} \int_{E_{j}^{k}}\prod_{l=1,4}
\langle h_{l}\sigma_{\rho(l)}\rangle_{\Q}^{q(x)}{\mu ({Q_j^k})^{\eta \cdot q(x)}}u(x)\,d \mu \\
\lesssim&\sum_{(k,j)\in \mathscr{H}_{21}}
\int_{E_{j}^{k}} \prod_{l=1,4}
\bigg(c_0^{-1}\|\omega_{\rho(l)}^{-1}\chi_{\Q}\|_{p_{\rho(l)}^{\prime}(\cdot)}^{-1}
\int_{\Q}h_{l}\sigma_{\rho(l)}d \mu \bigg)^{q(x)}\prod_{l=1}^{2}
\bigg(\frac{\|\omega_{l}^{-1}\chi_{\Q}\|_{p_{l}^{\prime}(\cdot)}}{\mu(\Q)^{1 - \frac{\eta}{{2}}}}\bigg)^{q(x)}u(x)\,d \mu\\
 \lesssim& \sum_{(k,j)\in
	\mathscr{H}_{21}}\int_{E_{j}^{k}}
\prod_{l=1,4}\bigg(c_0^{-1}\|\omega_{\rho(l)}^{-1}\chi_{\Q}\|_{p_{\rho(l)}^{\prime}(\cdot)}^{-1}
\int_{\Q}h_{l}\sigma_{\rho(l)}d \mu\bigg)^{q_\infty} 
\prod_{l=1}^{2}\bigg(\frac{\|\omega_{l}^{-1}\chi_{\Q}\|_{p_{l}^{\prime}(\cdot)}}
{\mu(\Q)^{1 - \frac{\eta}{{2}}}}\bigg)^{q(x)}u(x)\,d \mu \\
&  + 
\sum_{(k,j)\in \mathscr{H}_{21}} \int_{E_{j}^{k}} \prod_{l=1}^{2}
\bigg(\frac{\|\omega_{l}^{-1}\chi_{\Q}\|_{p_{l}^{\prime}(\cdot)}}{\mu(\Q)^{1 - \frac{\eta}{{2}}}}\bigg)^{q(x)}
\frac{u(x)}{(e+d(x,x_0))^{tq_{-}}}\,d \mu \\
=:&P_{3211}+ P_{3212}.
\end{align*}

Due to the fact that $1 \le \sigma_{2}(\Q) \approx  \sigma_2(E_j^k)$, it is acquired from \eqref{Suff.ineq_16}, \eqref{eqn:modular-Ap}, and Lemma~\ref{lemma:infty-bound} that

\begin{align*}
P_{3212}
& \leq \sum_{(k,j)\in \mathscr{H}_{21}}\sup_{x\in
	\Q}(e+d(x,x_0))^{-tq_{-}}
\int_{\Q}{\prod_{l=1}^{2}\|\omega_{l}^{-1}\chi_{\Q}\|_{p_{l}^{\prime}(\cdot)}^{q(x)}
	\mu(\Q)^{(\eta - 2)q(x)}u(x)\,d \mu}\\
& \lesssim \sum_{(k,j)\in \mathscr{H}_{21}}\inf_{x\in \Q}(e+d(x,x_0))^{-tq_{-}}\sigma_{2}(E_j^k)\\
& \lesssim \int_{\subRn}\frac{\sigma_{2}(x)}{(e+d(x,x_0))^{tq_{-}}}\,d \mu \lesssim 1.
\end{align*}

Next, for estimating $P_{3211}$, by the fact that for $l=1,2$, $\sigma_l(\Q)\geq 1$, it follows from Lemma \ref{lemma:p-infty-cond} that

\begin{equation} \label{eqn:p-infty-Hest}
\frac{\sigma_{l}(\Q)}
{\|\omega_{l}^{-1}\chi_{\Q}\|_{p_l'(\cdot)}}
\lesssim\frac{\sigma_{l}(\Q)}
{\sigma_{l}(\Q)^{\frac{1}{(p_{l}^{\prime})_{\infty}}}}
=  \sigma_{l}(\Q)^{\frac{1}{(p_{l})_{\infty}}}. 
\end{equation}
By~\eqref{eqn:modular-Ap}, \eqref{eqn:p-infty-Hest}, and Young's inequality,     
\begin{align*}
P_{3211}&\lesssim \sum_{(k,j)\in
	\mathscr{H}_{21}}\int_{E_{j}^{k}}{\prod_{l=1,4}
	\langle h_{l} \rangle_{\sigma_{\rho(l)},\Q}^{q_{\infty}}
	\sigma_{1}(\Q)^{\frac{q_{\infty}}{(p_{1})_{\infty}}}
	\sigma_{2}(\Q)^{\frac{q_{\infty}}{(p_{2})_{\infty}}}}
\prod_{l=1}^{2}\bigg(\frac{\|\omega_{l}^{-1}\chi_{\Q}
	\|_{p_{l}^{\prime}(\cdot)}}{\mu(\Q)^{1-\frac{\eta}{2}}}\bigg)^{q(x)}u(x)\,d \mu \\
& \leq \sum_{(k,j)\in \mathscr{H}_{21}}
\prod_{l=1,4} \langle h_{l} \rangle_{\sigma_{\rho(l)},\Q}^{q_{\infty}}
\sigma_{1}(\Q)^{\frac{q_{\infty}}{(p_{1})_{\infty}}}
\sigma_{2}(\Q)^{\frac{q_{\infty}}{(p_{2})_{\infty}}}
\int_{\Q}{\prod_{l=1}^{2}\|\omega_{l}^{-1}\chi_{\Q} 
	\|_{p_{l}^{\prime}(\cdot)}^{q(x)}\mu(\Q)^{(\eta - 2)q(x)}u(x)\,d \mu}\\
&\lesssim \left(\sum_{(k,j)\in \mathscr{H}_{21}}
\langle h_{1}\rangle_{\sigma_{1},\Q}^{(p_{1})_{\infty}}\sigma_{1}(\Q)
+\sum_{(k,j)\in \mathscr{H}_{21}}\langle
h_{4}\rangle_{\sigma_{2},\Q}^{(p_{2})_{\infty}}
\sigma_{2}(\Q)\right)^{\frac{{{q_\infty }}}{{{p_\infty }}}}.\\
\end{align*}     
The estimate for the above is identical to the estimate for $P_2$. Here we have to explain that the fact that $\sigma_1(\Q)\geq 1$ can make us obtain \eqref{sigma1_Averange_Bound}, then we can refer to the inequality \eqref{eqn:J2-final-est} to get estimate for the above.
Eventually, we can get that $P_{321} \lesssim 1.$

Now, it is $P_{322}$'s turn. This estimate is similar to before. However, we may need to replace the exponent $q_\infty$ with $\delta(\Q)$.
It follows from \eqref{eqn:p2-J3-est} that for $x\in \Q$,
\[ \bigg|\frac{1}{q(x)}-\frac{1}{\delta(\Q)}\bigg|
\leq \bigg|\frac{1}{p_1(x)}-\frac{1}{(p_1)_-(\Q)}\bigg|
+ \bigg|\frac{1}{p_2(x)}-\frac{1}{(p_2)_-(\Q)}\bigg| 
\lesssim \frac{1}{\log(e+d(x,x_0))}.  \]

It follows instantly from the estimate for $P_{321}$ that 
\begin{align*}
&P_{322}:=\sum_{(k,j)\in \mathscr{H}_{22}} \int_{E_{j}^{k}}\prod_{l=1,4}
\langle h_{l}\sigma_{\rho(l)}\rangle_{\Q}^{q(x)}{\mu(\Q)^{\eta \cdot q(x)}}u(x)\,d \mu \\
  \lesssim& \sum_{(k,j)\in
	\mathscr{H}_{22}}\int_{E_{j}^{k}}
\prod_{l=1,4}\bigg(c_0^{-1}\|\omega_{\rho(l)}^{-1}\chi_{\Q}\|_{p_{\rho(l)}^{\prime}(\cdot)}^{-1}
\int_{\Q}h_{l}\sigma_{\rho(l)}d \mu\bigg)^{\delta(\Q)} 
\prod_{l=1}^{2}\bigg(\frac{\|\omega_{l}^{-1}\chi_{\Q}\|_{p_{l}^{\prime}(\cdot)}}
{\mu(\Q)^{1 - \frac{\eta}{{2}}}}\bigg)^{q(x)}u(x)\,d \mu \\
& + 
\sum_{(k,j)\in \mathscr{H}_{22}} \int_{E_{j}^{k}} \prod_{l=1}^{2}
\bigg(\frac{\|\omega_{l}^{-1}\chi_{\Q}\|_{p_{l}^{\prime}(\cdot)}}{\mu(\Q)^{1 - \frac{\eta}{{2}}}}\bigg)^{q(x)}
\frac{u(x)}{(e+d(x,x_0))^{tq_{-}}}\,d \mu \\
 =:&P_{3221}+ P_{3222}.
\end{align*}

On the one hand, the estimate for $P_{3222}$ is same as the estimate for $P_{3212}$.  On the other hand, for
estimating $P_{3221}$, by \eqref{EQ-I11estimate}, \eqref{eqn:modular-Ap}, \eqref{eqn:J2-final-est-2}, and Young's inequality, we can obtain that
\begin{align*}
P_{3221}&\lesssim \sum_{(k,j)\in
	\mathscr{H}_{22}}\int_{E_{j}^{k}}\left({\prod_{l=1,4}
	\langle h_{l} \rangle_{\sigma_{\rho(l)},\Q}^{\delta(\Q)}
	\sigma_{l}(\Q)^{\frac{\delta(\Q)}{(p_l)_-(\Q)}}}\right)
\prod_{l=1}^{2}\bigg(\frac{\|\omega_{l}^{-1}\chi_{\Q}
	\|_{p_{l}^{\prime}(\cdot)}}{\mu(\Q)^{1 - \frac{\eta}{{2}}}}\bigg)^{q(x)}u(x)\,d \mu \\
& \leq \sum_{(k,j)\in \mathscr{H}_{22}}
\left(\prod_{l=1,4}\langle h_{l} \rangle_{\sigma_{\rho(l)},\Q}^{\delta(\Q)}
\sigma_{l}(\Q)^{\frac{\delta(\Q)}{(p_l)_-(\Q)}}\right)
\int_{\Q}{\prod_{l=1}^{2}\|\omega_{l}^{-1}\chi_{\Q} 
	\|_{p_{l}^{\prime}(\cdot)}^{q(x)}\mu(\Q)^{(\eta - 2)q(x)}u(x)\,d \mu}\\
&\lesssim \left(\sum_{(k,j)\in \mathscr{H}_{22}}
\langle h_{1}\rangle_{\sigma_{1},\Q}^{(p_{1})_-(\Q)}\sigma_{1}(\Q)
+\sum_{(k,j)\in \mathscr{H}_{22}}\langle
h_{4}\rangle_{\sigma_{2},\Q}^{(p_{2})_-(\Q)}
\sigma_{2}(\Q)\right)^{\frac{{\delta (Q_j^k)}}{{\beta (Q_j^k)}}}\\
&\lesssim  \sum_{\theta=1,c}\left(\sum_{(k,j)\in \mathscr{H}_{22}}
\langle h_{1}\rangle_{\sigma_{1},\Q}^{(p_{1})_-(\Q)}\sigma_{1}(\Q)
+\sum_{(k,j)\in \mathscr{H}_{22}}\langle
h_{4}\rangle_{\sigma_{2},\Q}^{(p_{2})_-(\Q)}
\sigma_{2}(\Q)\right)^\theta.
\end{align*}     
where $c={c_{\eta,p_1(\cdot),p_2(\cdot)}}\ge1$, and it is independent  on $\Q$.

The estimate for the second term in the parenthesis is the same as the
final estimate for $P_{312}$. The estimate for the first term is the same as the estimate
for $P_{311}$ above, noting that since $h_1\geq 1$ and by H\"older's
inequality,
\[ \langle h_{1}\rangle_{\sigma_{1},\Q}^{(p_{1})_-(\Q)}
\leq \langle
h_{1}^{\frac{\pap}{(p_{1})_-(\Q)}}\rangle_{\sigma_{1},\Q}^{(p_{1})_-(\Q)}
\leq \langle
h_{1}^{\frac{\pap}{(p_{1})_-}}\rangle_{\sigma_{1},\Q}^{(p_{1})_-}. \]
Thus, it follows that $P_{32} \le P_{321}+P_{322} \lesssim 1.$ Further, $P_3 \le P_{31}+P_{32} \lesssim 1$ and $I_2 \le P_1+P_2+P_3 \lesssim 1$.
This completes the estimate for $I_2$.

\textbf{Estimate for $I_4$:}
The estimate for $I_4$ is similar to that for $I_2$. We will decompose $I_4$ into the same parts as above. Some parts have a very similar estimate to $I_2$, so we will only give key inequalities and omit some details. For other parts, we will need to modify the argument and present these in more detail.

By the multilinear fractional Calder\'on-Zygmund cubes related to $\M^\mathcal{D}(h_2\sigma_1,h_4\sigma_2)$.  We may decompose and define the sets $\mathscr{F}$, $\mathscr{G}$
and $\mathscr{H}$. In this situation, we define the sums over these sets by $M_1$, $M_2$ and $M_3$.

\textbf{Estimate for $M_1$:} 

Similar to the estimate for $P_1$, from the fact that
$h_2,\,h_4 \leq 1$, we have

\begin{align*}
M_1
&= \sum_{(k,j)\in \mathscr{F}}\int_{E_{j}^{k}}{\prod_{l=2,4}\langle h_{l}\sigma_{\rho(l)}\rangle_{\Q}^{q(x)}\mu(\Q)^{\eta q(x)}u(x)\,d \mu}\\
&\leq \sum_{(k,j)\in \mathscr{F}}\int_{E_{j}^{k}}{\prod_{l=1}^{2}\langle \sigma_{l}\rangle_{\Q}^{q(x)}\mu(\Q)^{\eta q(x)}u(x)\,d \mu}\\
&= \sum_{(k,j)\in \mathscr{F}}\int_{E_{j}^{k}}
\prod_{l=1}^{2}\sigma_{l}(\Q)^{q(x)-\delta(\Q)}
\sigma_{1}(\Q)^{\delta(\Q)}\sigma_{2}(\Q)^{\delta(\Q)}\mu(\Q)^{(\eta  - 2)q(x)}u(x)\,d \mu \\
&\leq \sum_{(k,j)\in \mathscr{F}}
\prod_{l=1}^{2}\bigg(1+\sigma_{l}(\Q) \bigg)^{q_{+}(\Q)-\delta(\Q)}
\int_{E_{j}^{k}}\sigma_{1}(\Q)^{\delta(\Q)}\sigma_{2}(\Q)^{\delta(\Q)}\mu(\Q)^{(\eta  - 2)q(x)}u(x)\,d \mu\\
&\lesssim \prod_{l=1}^{2}\bigg(1+\sigma_{l}(Q_0)\bigg)^{q_{+}-\delta_{-}}
\sum_{(k,j)\in \mathscr{F}}
\sigma_{1}(\Q)^{\frac{\delta(\Q)}{(p_{1})_{-}(\Q)}}\sigma_{2}(\Q)^{\frac{\delta(\Q)}{(p_{2})_{-}(\Q)}}\\
&\lesssim \sum_{\theta=1,c}\left(\sum_{(k,j)\in \mathscr{F}}\sigma_{1}(\Q)
+ \sum_{(k,j)\in \mathscr{F}}\sigma_{2}(\Q)\right)^\theta\\
&\lesssim \sum_{\theta=1,c}\left(\sum_{(k,j)\in \mathscr{F}}\sigma_{1}(E_{j}^{k})
+\sum_{(k,j)\in \mathscr{F}}\sigma_{2}(E_{j}^{k})\right)^\theta\\
&\leq \sum_{\theta=1,c}\left(\sigma_{1}(Q_0)+\sigma_{2}(Q_0) \right)^\theta\\
& \lesssim 1,
\end{align*}
where $c={c_{\eta,p_1(\cdot),p_2(\cdot)}}\ge1$, and it is independent  on $\Q$.

\textbf{Estimate for $M_2$:} 

Similar to the estimate for $P_2$, it follows from the
condition of $A_{\vec{p}(\cdot),q(\cdot)}$ and
Lemma \ref{Gu6} that

\begin{align*}
&{\mu(\Q)^{\eta  - 2}}\int_{\Q}{h_{2}\sigma_{1}\,d \mu}\int_{\Q}{h_{4}\sigma_{2}\,d \mu} \\
 \le& \left[ {\vec \omega } \right]_{{A_{\vec p( \cdot ),q( \cdot )}}(X)} \|\omega\chi_{\Q}\|_{\qq}^{-1}
\prod_{l=1}^{2}
\|\omega_{l}^{-1}\chi_{\Q}\|_{p_{l}^{\prime}(\cdot)}^{-1}
\|h_{2}\|_{L_{\sigma_{1}}^{\pap}}\|h_{4}\|_{L_{\sigma_{2}}^{\pbp}}
\|\chi_{\Q}\|_{L_{\sigma_{1}}^{\cpap}}\|\chi_{\Q}\|_{L_{\sigma_{2}}^{\cpbp}}\\
  =&\left[ {\vec \omega } \right]_{{A_{\vec p( \cdot ),q( \cdot )}}(X)}
\|\omega\chi_{\Q}\|_{\qq}^{-1}\prod_{l=1}^{2}\|\omega_{l}^{-1}\chi_{\Q}\|_{p_{l}^{\prime}(\cdot)}^{-1}\|h_{2}\|_{L_{\sigma_{1}}^{\pap}}\|h_{4}\|_{L_{\sigma_{2}}^{\pbp}}\|\omega_{1}^{-1}\chi_{\Q}\|_{\cpap}
\|\omega_{2}^{-1}\chi_{\Q}\|_{\cpbp}.
\end{align*}
Due to the fact that $1\le u(Q_0)\lesssim u(Q_j^k)$, applying Lemma \ref{p.omega}, the above can be bounded by
$\left[ {\vec \omega } \right]_{{A_{\vec p( \cdot ),q( \cdot )}}(X)} \|\omega\chi_{\Q}\|_{\qq}^{-1} 
 \le c_0\left[ {\vec \omega } \right]_{{A_{\vec p( \cdot ),q( \cdot )}}(X)}$.

Then, it follows from Lemma \ref{lemma:p-infty-px} that
\begin{align*}
M_2
&= \sum_{(k,j)\in \mathscr{G}}\int_{E_{j}^{k}}
\prod_{l=2,4}\langle h_{l}\sigma_{\rho(l)}\rangle_{\Q}^{q(x)}{\mu(\Q)^{\eta \cdot {q(x)}}}u(x)\,d \mu \\
& \lesssim \sum_{(k,j)\in \mathscr{G}}
\int_{E_{j}^{k}}\bigg((c_0\left[ {\vec \omega } \right]_{{A_{\vec p( \cdot ),q( \cdot )}}(X)})^{-1}{{\mu(\Q)^{\eta  - 2}}}
\int_{\Q}h_{2}\sigma_{1}d \mu \int_{\Q}h_{4}\sigma_{2}d \mu\bigg)^{q(x)}u(x)\,d \mu\\
&\lesssim \sum_{(k,j)\in \mathscr{G}}
\int_{E_{j}^{k}} \prod_{l=2,4}\langle
h_{l}\sigma_{\rho(l)}\rangle_{\Q}^{q_{\infty}}{\mu(\Q)^{\eta  \cdot {q_\infty }}}u(x)\,d \mu
+ \sum_{(k,j)\in \mathscr{G}}\int_{E_{j}^{k}}\frac{u(x)}{(e+d(x,x_0))^{tq_{-}}}\,d\mu.
\end{align*}

It is stemmed from Lemma~\ref{lemma:infty-bound} that the second sum of above is bounded by 1.
By \eqref{P_infty_Bounded}, we have  
 
\begin{align*}
&\sum_{(k,j)\in \mathscr{G}}\int_{E_{j}^{k}}\prod_{l=2,4}
\langle h_{l}\sigma_{\rho(l)}\rangle_{\Q}^{q_{\infty}}{\mu(\Q)^{\eta  \cdot {q_\infty }}}u(x)\,d \mu \\
& \quad  = \sum_{(k,j)\in
	\mathscr{G}}\int_{E_{j}^{k}}\prod_{l=2,4}
\langle h_{l}\rangle_{\sigma_{\rho(l)},\Q}^{q_{\infty}}
\sigma_{1}(\Q)^{q_{\infty}}\sigma_{2}(\Q)^{q_{\infty}}\mu(\Q)^{(\eta  - 2)q_{\infty}}u(x)\,d \mu
\\
& \quad \lesssim \sum_{(k,j)\in \mathscr{G}}
\langle h_{2}\rangle_{\sigma_{1},\Q}^{q_{\infty}}
\sigma_{1}(\Q)^{\frac{q_{\infty}}{(p_{1})_{\infty}}}
\langle h_{4}\rangle_{\sigma_{2},\Q}^{q_{\infty}}
\sigma_{2}(\Q)^{\frac{q_{\infty}}{(p_{2})_{\infty}}}.\\
\intertext{Using Young's inequality and by
	Lemmas~\ref{Suff_1},~\ref{lemma:p-infty-px},
	and~\ref{lemma:infty-bound}, the above sum}
& \quad \lesssim \left(\sum_{(k,j)\in \mathscr{G}}
\langle h_{2}\rangle_{\sigma_{1},\Q}^{(p_{1})_{\infty}}\sigma_{1}(\Q)
+  \sum_{(k,j)\in \mathscr{G}}
\langle h_{4}\rangle_{\sigma_{2},\Q}^{(p_{2})_{\infty}}\sigma_{2}(\Q)\right)^{\frac{{{q_\infty }}}{{{p_\infty }}}}\\
& \quad \lesssim \left(\sum_{(k,j)\in \mathscr{G}}
\langle
h_{2}\rangle_{\sigma_{1},\Q}^{(p_{1})_{\infty}}\sigma_{1}(E_{j}^{k})
+  \sum_{(k,j)\in \mathscr{G}}
\langle
h_{4}\rangle_{\sigma_{2},\Q}^{(p_{2})_{\infty}}\sigma_{2}(E_{j}^{k})\right)^{\frac{{{q_\infty }}}{{{p_\infty }}}}\\ 
& \quad \le 
\left(\int_{\subRn}{M}_{\sigma_{1}}^\mathcal{D} h_{2}(x)^{(p_{1})_{\infty}}\sigma_{1}(x)\,d \mu
+\int_{\subRn}{M}_{\sigma_{2}}^\mathcal{D} h_{4}(x)^{(p_{2})_{\infty}}\sigma_{2}(x)\,d \mu\right)^{\frac{{{q_\infty }}}{{{p_\infty }}}}\\
& \quad \lesssim 
\left(\int_{\subRn}h_{2}(x)^{(p_{1})_{\infty}}\sigma_{1}(x)\,d \mu
+ \int_{\subRn}h_{4}(x)^{(p_{2})_{\infty}}\sigma_{2}(x)\,d \mu\right)^{\frac{{{q_\infty }}}{{{p_\infty }}}}\\
& \quad \lesssim  \left(\int_{\subRn}h_{2}(x)^{p_{1}(x)}\sigma_{1}(x)\,d \mu
+\int_{\subRn}h_{4}(x)^{p_{2}(x)}\sigma_{2}(x)\,d \mu+ \sum\limits_{l = 1}^2 {\int_{\subRn}\frac{\sigma_{l}(x)}{(e+d(x,x_0))^{t(p_{l})_{-}}}\,d \mu}\right)^{\frac{{{q_\infty }}}{{{p_\infty }}}} \\
& \quad \lesssim 1.
\end{align*}

\textbf{Estimate for $M_3$:} 

Similar to estimating $P_3$, the set $\mathscr{H}$ may be divided into $\mathscr{H}_{1}$ and $\mathscr{H}_{2}$. Further, here we continue to divided $\mathscr{H}_{1}$ into the following two sets. We denote them by
\[ \mathscr{H}_{11} = \{ (k,j) \in \mathscr{H}_{1} : \sigma_1(\Q)\leq
1, \sigma_2(\Q) \leq 1 \} \]
and
\[ \mathscr{H}_{12} = \{ (k,j) \in \mathscr{H}_{1} : \sigma_1(\Q)>
1, \sigma_2(\Q) \leq 1\}. \]
The corresponding sums over $\mathscr{H}_{11}$ and $\mathscr{H}_{12}$ are defined as $M_{311}$ and $M_{312}$ respectively.

The estimate for $M_{311}$ is similar to that for $P_{31}$. 
Note that the fact that $h_2,\,h_4\leq 1$.  From Lemmas \ref{lemma:p-infty-px} and \ref{lemma:infty-bound}, we get
\begin{align*}
M_{311}=&\sum_{(k,j)\in \mathscr{H}_{11}} \int_{E_{j}^{k}}
\prod_{l=2,4}\langle h_{l}\sigma_{\rho(l)} \rangle_{\Q}^{q(x)}
{\mu(\Q)^{\eta  \cdot q(x)}}u(x)\,d \mu \\
  \lesssim& \sum_{(k,j)\in \mathscr{H}_{11}} \int_{E_{j}^{k}}
\prod_{l=2,4}\langle h_{l}\sigma_{\rho(l)}
\rangle_{\Q}^{q_{+}(\Q)} {\mu(\Q)^{\eta  \cdot {q_{+}(\Q)}}}u(x)\,d \mu
+\sum_{(k,j)\in \mathscr{H}_{11}}\int_{E_{j}^{k}}\frac{u(x)}{(e+d(x,x_0))^{tq_{-}}}\,d \mu\\
 \le& \sum_{(k,j)\in \mathscr{H}_{11}}
\prod_{l=2,4}\langle h_{l}
\rangle_{\sigma_{\rho(l)},\Q}^{q_{+}(\Q)}  
\int_{E_{j}^{k}}\mu(\Q)^{(\eta  - 2) q_{+}(\Q)} \prod_{l=2,4}\sigma_{\rho(l)}(\Q)^{q_{+}(\Q)} u(x)\,d \mu + 1.
\intertext{Firstly, by Lemma \ref{lemma:diening}, we may replace $\mu(Q_j^k)^{(\eta  - 2)q(x)}$ with $\mu(Q_j^k)^{(\eta  - 2)q_+(Q_j^k)}$ and apply this to \eqref{EQ-I1estimte}. Then due to the fact that for $l=1,\,2$, $\sigma_l(\Q)\leq 1$, the above }
 \leq& \sum_{(k,j)\in \mathscr{H}_{11}}
\prod_{l=2,4}\langle h_{l}
\rangle_{\sigma_{\rho(l)},\Q}^{\delta(\Q)}  
\int_{E_{j}^{k}}\mu(\Q)^{(\eta  - 2) q_{+}(\Q)}
\prod_{l=2,4}\sigma_{\rho(l)}(\Q)^{\delta(\Q)} u(x)\,d \mu + 1 \\
\lesssim&  \left(\sum_{(k,j)\in \mathscr{H}_{11}}
\prod_{l=2,4}\langle h_{l}\rangle_{\sigma_{\rho(l),\Q}}^{\delta(\Q)}
\sigma_{1}(\Q)^{\frac{\delta(\Q)}{(p_{1})_{-}(\Q)}}\sigma_{2}(\Q)^{\frac{\delta(\Q)}{(p_{2})_{-}(\Q)}}
+ 1\right).\\
\intertext{By Young's inequality, the above}
\lesssim&  \left(\sum_{\theta=1,c}\left(\sum_{(k,j)\in \mathscr{H}_{11}}
\langle
h_{2}\rangle_{\sigma_{1},\Q}^{(p_{1})_{-}(\Q)}
\sigma_{1}(\Q)
+ \sum_{(k,j)\in \mathscr{H}_{11}}
\langle h_{4}\rangle_{\sigma_{2},\Q}^{(p_{2})_{-}(\Q)}
\sigma_{2}(\Q)\right)^\theta+ 1\right)\\
\lesssim& 1.
\end{align*}
where $c={c_{\eta,p_1(\cdot),p_2(\cdot)}}\ge1$, and it is independent  on $\Q$. The last estimate is valid since we can do as that of $P_{312}$.

\medskip

Next, by \eqref{H2-first-estimate} (we can replace the $h_1$ with $h_2$) and \eqref{H2-second-estimate}, it follows from Lemma \ref{lemma:p-infty-px} that
\begin{align*}
M_{312}=&\sum_{(k,j)\in \mathscr{H}_{12}} \int_{E_{j}^{k}}\prod_{l=2,4}
\langle h_{l}\sigma_{\rho(l)}\rangle_{\Q}^{q(x)}{\mu(\Q)^{\eta  \cdot q(x)}}u(x)\,d \mu \\
  \lesssim& \sum_{(k,j)\in
	\mathscr{H}_{12}}\int_{E_{j}^{k}}
\prod_{l=2,4}\bigg(c_0^{-1}\|\omega_{\rho(l)}^{-1}\chi_{\Q}\|_{p_{\rho(l)}^{\prime}(\cdot)}^{-1}
\int_{\Q}h_{l}\sigma_{\rho(l)}d \mu\bigg)^{\delta(\Q)}
\prod_{l=1}^{2}\bigg(\frac{\|\omega_{l}^{-1}\chi_{\Q}\|_{p_{l}^{\prime}(\cdot)}}
{\mu(\Q)^{1 - \frac{\eta }{{2}}}}\bigg)^{q(x)}u(x)\,d \mu \\
  &+
\sum_{(k,j)\in \mathscr{H}_{12}} \int_{E_{j}^{k}} \prod_{l=1}^{2}
\bigg(\frac{\|\omega_{l}^{-1}\chi_{\Q}\|_{p_{l}^{\prime}(\cdot)}}{\mu(\Q)^{1 - \frac{\eta}{{2}}}}\bigg)^{q(x)}
\frac{u(x)}{(e+d(x,x_0))^{tq_{-}}}\,d \mu \\
=:&{M}_{3121}+ {M}_{3122}.
\end{align*}

On the one hand, estimate for ${M}_{3122}$ is same as the estimate for $P_{3212}$.
On the other hand, similar to the estimate for $P_{3221}$, we can get that
\begin{align*}
{M}_{3121}&\lesssim \sum_{(k,j)\in
	\mathscr{H}_{12}}\int_{E_{j}^{k}}\left({\prod_{l=2,4}
	\langle h_{l} \rangle_{\sigma_{\rho(l)},\Q}^{\delta(\Q)}
	\sigma_{l}(\Q)^{\frac{\delta(\Q)}{(p_l)_-(\Q)}}}\right)
\prod_{J=1}^{2}\bigg(\frac{\|\omega_{J}^{-1}\chi_{\Q}
	\|_{p_{J}^{\prime}(\cdot)}}{\mu(\Q)^{1 - \frac{\eta}{{2}}}}\bigg)^{q(x)}u(x)\,d \mu \\
& \leq \sum_{(k,j)\in \mathscr{H}_{12}}
\left(\prod_{l=2,4}\langle h_{l} \rangle_{\sigma_{\rho(l)},\Q}^{\delta(\Q)}
\sigma_{l}(\Q)^{\frac{\delta(\Q)}{(p_{l})_-(\Q)}}\right)
\int_{\Q}{\prod_{J=1}^{2}\|\omega_{J}^{-1}\chi_{\Q} 
	\|_{p_{J}^{\prime}(\cdot)}^{q(x)}\mu(\Q)^{(\eta  - 2)q(x)}u(x)\,d \mu}\\
&\lesssim \sum_{\theta=1,c}\left(\sum_{(k,j)\in \mathscr{H}_{12}}
\langle h_{2}\rangle_{\sigma_{1},\Q}^{(p_{1})_-(Q_j^k)}\sigma_{1}(\Q)
+\sum_{(k,j)\in \mathscr{H}_{12}}\langle
h_{4}\rangle_{\sigma_{2},\Q}^{(p_{2})_-(Q_j^k)}
\sigma_{2}(Q_j^k)\right)^\theta.
\end{align*}     
where $c={c_{\eta,p_1(\cdot),p_2(\cdot)}}\ge1$, and it is independent  on $\Q$.

The above is the same as the estimate for $P_{312}$. Then, $M_{312} \lesssim 1$.
\medskip

To estimate the sum over $\mathscr{H}_{2}$, $M_{32}$, similar to that of $P_3$, we consider the following two sets  $\mathscr{H}_{21}$ and $\mathscr{H}_{22}$.

On the one hand, the estimate over $\mathscr{H}_{21}$, ${M}_{321}$, is analogous to the estimate
over this set as before. we only need to replace $h_1$ with $h_2$.  This procedures will produce two 
terms just like $P_{3211}$ and $P_{3212}$ above and we denote them by $M_{3211}$ and $M_{3212}$ respectively. The estimate for $M_{3212}$ is the same as the estimate for $P_{3212}$. The estimate for $M_{3211}$ is the similar to the estimate for $P_{3211}$. Sure, we need to remind that in the final line, the $h_2$ term can be estimated like the $h_4$ term since $h_2,\,h_4 \leq 1$.  
In short, we can easily get that ${M}_{321} \lesssim 1$.

On the other hand, for estimating the sum over $\mathscr{H}_{22}$ that is ${M}_{322}$, we can do as before, getting the terms like $P_{3221}$ and $P_{3222}$ by which we denote $M_{3221}$ and $M_{3222}$ respectively, and we also need to replace $h_1$ by $h_2$.  The
estimate of $M_{3222}$ is again the same.  To estimate $M_{3222}$ we argue as before 
except we replace the exponent $\delta(\Q)$ by $q_\infty$.   But then the last line of the estimate can be 
$$ \sum_{\theta=1,c}\left(\sum_{(k,j)\in \mathscr{H}_{22}}
\langle h_{1}\rangle_{\sigma_{1},\Q}^{(p_{1})_\infty}\sigma_{1}(\Q)
+\sum_{(k,j)\in \mathscr{H}_{22}}\langle
h_{4}\rangle_{\sigma_{2},\Q}^{(p_{2})_{\infty}}
\sigma_{2}(\Q)\right)^\theta.$$
The subsequent process is completely similar to the estimate for $P_2$.  

In a nutshell, we can easily derive that ${M}_{322} \lesssim 1.$
It follows immediately that $I_4 \le M_1+M_2+M_3 \lesssim 1.$

All in all, $\sum\limits_{i = 1,2,3,4} {{I_i}} \lesssim 1$, which is the desired result.

{\bf{Case 2: $\mu(X) < \infty$.}} 

Last but not least, we consider the case for $\mu(X) < \infty$. We alway can regard $X$ as a dyadic cube $Q_0$ due to Lemmas $\ref{finite-bounded}$ and $\ref{cubes}$. This proof is similar to before and we just need to make some changes for Calderón-Zygmund Decomposition. 
We also consider $f_i$ is a nonnegative funcion with \(\|\omega_{i} f_i \|_{p_i(\cdot)} =1\)  and decompose $\vec{f}:=(f_1,f_2)=(h_1,h_3)+(h_1,h_4)+(h_2,h_3)+(h_2,h_4) $ as before.
The construction of Calderón-Zygmund cubes at any height \(\lambda>\lambda_0 := \mu {\left( {Q_j^k} \right)^{\eta  - 2}}\int_{{Q^k}} {h_s \sigma_{\rho(s)}d\mu }\int_{{Q^k}} {h_t \sigma_{\rho(t)} d\mu }\), where $s \in \left\{ {1,2} \right\}$ and $t \in \left\{ {3,4} \right\}$.  

By the condition of $A_{\vec{p}(\cdot),q(\cdot)}$, Lemma \ref{Holder}, and Lemma \ref{finite-bounded},
\begin{align*}
    \lambda_0 \leq \mu \left( {X} \right)^{\eta  - 2}\prod_{l=s,t} \int_{{X}} {h_l \sigma_{\rho(l)}d\mu } 
    &\le 16 \, \mu {\left( {{X}} \right)^{\eta  - 2}} \prod_{l=i,j} \|h_l\sigma_{\rho(l)}^{\frac{1}{p_l(\cdot)}}\|_{p_{\rho(l)}(\cdot)} \| {\sigma_{\rho(l)}^{\frac{1}{p'_l(\cdot)}}} \|_{p'_{\rho(l)}( \cdot )}\\
    &\le 16 \, \mu {\left( {{X}} \right)^{\eta  - 2}} \prod_{l=1,2}\|\omega_{l}^{-1}\|_{p'_{l}(\cdot)}\\
    & \le 16 {\left[ \vec{\omega}  \right]_{{A_{\vec{p}( \cdot ),q( \cdot )}}}}\| {\omega } \|_{q( \cdot )}^{ - 1} \lesssim 1, 
\end{align*}
where $s=1,2$ and $t=3,4$.

By Lemma \ref{CZD}, set $a=2 C_{C Z}$ and $\left\{ {Q_j^k} \right\}$ is the Calderón-Zygmund cubes of $f_i$ at height $a^k$, for all integers $k \geq k_0=[\log _a \lambda_0]$. Then 
$$
X = X_{\eta ,{a^{{k_0}}}}^{\mathcal D}\bigcup {{{\left( {X_{\eta ,{a^{{k_0}}}}^{\mathcal D}} \right)}^c}}  = \left( {\bigcup\limits_{k = {k_0}}^\infty  {X_{\eta ,{a^k}}^{\mathcal D}\backslash X_{\eta ,{a^{k + 1}}}^{\mathcal D}} } \right)\bigcup {{{\left( {X_{\eta ,{a^{{k_0}}}}^{\mathcal D}} \right)}^c}},
$$
where $X_{\eta,a^{k_0}}^{\mathcal D}:=\left\{x \in X: \M_\eta^{\mathcal{D}} \vec{f}(x)>\lambda_0\right\} \subseteq \left\{ {Q_j^k} \right\}$.

From {\bf step 3} as before. It is sufficient to demonstrate that
\begin{align*}
    \int_{X}\M_\eta^{\d}(h_{s}\sigma_{\rho(s)},h_{t}\sigma_{\rho(t)})(x)^{q(x)}u(x)\,d\mu \lesssim 1,
\end{align*}
and it follows immediately from the derivation of \eqref{case2_1} that
\begin{align*}
&\int_{X}\M_\eta^{\d}(h_{s}\sigma_{\rho(s)},h_{t}\sigma_{\rho(t)})(x)^{q(x)}u(x)\,d\mu \\ 
=&\int_{{{{\left( {X_{\eta ,{a^{{k_0}}}}^{\mathcal D}} \right)}^c}}} \M_\eta^{\mathcal{D}} (h_{s}\sigma_{\rho(s)},h_{t}\sigma_{\rho(t)})^{q(\cdot)} u +\sum_{k=k_0}^{\infty} \int_{{X_{\eta ,{a^k}}^{\mathcal D}\backslash X_{\eta ,{a^{k + 1}}}^{\mathcal D}}} \M_\eta^{\mathcal{D}} (h_{s}\sigma_{\rho(s)},h_{t}\sigma_{\rho(t)})^{q(\cdot)} u \\
 \lesssim & \lambda_0 u(X)+ \sum_{k,j}\int_{E_{j}^{k}}\prod_{l=s,t}
 \bigg(\int_{\Q}{h_{l}\sigma_{\rho(l)}d \mu}\bigg)^{q(x)}
 \mu(Q_j^k)^{(\eta-2)q(x)}u(x)\,d\mu,
\end{align*} 
where $s=1,2$ and $t=3,4$. 

The first term is obviously bounded by a constant. 
When $(s,t) = (1,3),(1,4),(2,4)$, ones can follows the methods of $I_1,P_1,M_1$ respectively to get the results that the second term is bounded by a constant.

We finally finish this proof.

\section{\bf The proof of Theorem \ref{Text.condi.}}\label{sec.5.}
Similar to before, for the linear case, we decompose a nonnegative $f=h_1+h_2$, where $h_1=f \chi_{\left\{f>1\right\}}$ and $h_2=$ $f \chi_{\left\{f \leq 1\right\}}$. It can be deduced from Lemma \ref{p.omega} that
\begin{equation*}\label{Suff.ineq_0}
\left\|h_i \sigma\right\|_{p(\cdot)} \leq \left\|f \sigma\right\|_{p(\cdot)} = 1, \quad i=1,2.
\end{equation*}
By Lemma \ref{p.omega} and the sublinearity of $M_\eta^{\mathcal{D}}$, it suffices to prove that 
\begin{equation}\label{Suff.ineq_1}
H_i:=\int_X(M_\eta^{\mathcal{D}} (h_i\sigma)(x)^{q(x)} \omega(x)^{q(x)} d \mu \lesssim \left(1+[\omega ]_{C_{p( \cdot ),q( \cdot )}^1}\right)[\omega, v]_{C_{p(\cdot), q(\cdot)}^2}, \quad i=1,2,
\end{equation}
From \eqref{zdkz_}, we observe that
\begin{align*}
    M_\eta^{\d}(f \sigma)(x) \approx \sum_{k,j}\left(\mu(\Q)^{\eta - 1}\int_{\Q}f \sigma d \mu \right)\chi_{E_j^k}(x).
\end{align*}

\textbf{Estimate for $H_1$:}

It follows from  \eqref{Bound_function_geq1} that 
\begin{align*}
&\quad \int_X M_\eta^{\mathcal{D}} (h_1 \sigma)(x)^{q(x)} u(x) d \mu \\
& \approx \sum_{k,j}\int_{E_j^k}(\int_{\Q}h_1\sigma d\mu)^{q(x)}(\mu(\Q)^{\eta - 1})^{q(x)}u(x) d \mu \\
& \le [\omega ]_{C_{p( \cdot ),q( \cdot )}^1}\sum_{k,j}\langle h_1\rangle_{\sigma,\Q}^{q_-} \int_{E_j^k} \sigma(\Q)^{q(x)}(\mu(\Q)^{\eta - 1})^{q(x)}u(x)d\mu\\
&=: [\omega ]_{C_{p( \cdot ),q( \cdot )}^1}\sum_{k,j}\langle h_1\rangle_{\sigma,\Q}^{q_-} \Gamma_j^k
\end{align*}

\textbf{Estimate for $H_2$:}

\begin{align*}
&\quad \int_X M_\eta^{\mathcal{D}} (h_2 \sigma)(x)^{q(x)} u(x) d \mu \\
& \approx \sum_{k,j}\int_{E_j^k}(\int_{\Q}h_2\sigma d\mu)^{q(x)}(\mu(\Q)^{\eta - 1})^{q(x)}u(x) d \mu \\
& \lesssim \sum_{k,j}\langle h_2\rangle_{\sigma,\Q}^{q_-} \int_{E_j^k} \sigma(\Q)^{q(x)}(\mu(\Q)^{\eta - 1})^{q(x)}u(x)d\mu.\\
& =:\sum_{k,j}\langle h_2\rangle_{\sigma,\Q}^{q_-} \Gamma_j^k.
\end{align*}

Now let $\Omega:=\{(k,j)\}_{k,j\in \Z}$, for any $E \subseteq \Omega$, we define the measure $\Gamma$ by $\Gamma(E):=\sum_{(k,j) \in E}\Gamma_j^k$.

Define
$$
S:\left(L^1+L^{\infty}\right)\left(X, \sigma d \mu\right) \rightarrow L^{\infty}(\Omega, d\Gamma)
$$
by
$$
S(g)(k, j):=\langle g\rangle_{\sigma, \Q}, \quad g \in\left(L^1+L^{\infty}\right)\left(X, \sigma d \mu\right) .
$$
Obviously, ${\left\| S \right\|_{{L^\infty }(\sigma) \to {L^\infty }(\Omega,\Gamma)}} \le 1$. 
Let $\lambda>0$ and $\{J_i\}_i$ denotes the maximal cubes relative to the collection
\[
  \{Q_j^k: \langle g\rangle_{\sigma, \Q}>\lambda\}.
\]

It follows from the definition of $C_{\vec{p}(\cdot),q(\cdot)}(X)$ that
\begin{align*}
    &\quad \quad \quad \Gamma\{(k,j):S(f)(k,j)>\lambda\}\\
    &=\sum_{\{(j,k):\, S(f)(k,j)>\lambda\}} \Gamma_j^k =\sum_i \sum_{Q_j^k\subseteq J_i}\Gamma_j^k\\
    &\leq \sum_i \sum_{Q_j^k\subseteq J_i} \int_{E_{j}^{k}}M_{\eta}^{\d}(\sigma \chi_{J_i})^{q(x)}u(x) d\mu\\
    &\leq \sum_i  \int_{J_i}M_{\eta}^{\d}(\sigma \chi_{J_i})^{q(x)}u(x) d\mu\\
    &\leq [\omega, v]_{C_{p(\cdot), q(\cdot)}^2}^{q_-} \sum_{i}\left(\int_{\Q}\sigma \chi_{J_i} d\mu\right)^{\frac{q_-}{{{p_t}}}}\\
& \leq [\omega, v]_{C_{p(\cdot), q(\cdot)}^2}^{q_-} {\left( {{\lambda ^{ - 1}}\int_X {f\sigma } } \right)} ^{\frac{q_-}{{{p_t}}}}
\end{align*}


This shows that ${\left\| S \right\|_{{L^1(\sigma)} \to W{L^{{{({p_t})}^{ - 1}}{q_ - }}(\Omega,d\Gamma)}}} \le [\omega ,v]_{C_{p( \cdot ),q( \cdot )}^2}^{{p_t}}$.
It is derived from Marcinkiewicz interpolation theorem (\cite{249}, Theorem 1.3.2) that ${\left\| S \right\|_{{L^{p_t}(\sigma)}\to {L^{q_-}(\Omega,d\Gamma)}}} \le [\omega ,v]_{C_{p( \cdot ),q( \cdot )}^2}$.
This implies that
\begin{align*}
    \|S(h_t)\|_{L^{q_-}(\Omega,d\Gamma)} \lesssim [\omega,v]_{C_{p(\cdot), q(\cdot)}^2} \|h_t\|_{L^{p_t}(\sigma)}.
\end{align*}
Then
\begin{align*}
    H_1 + H_2 &\lesssim \left(1+[\omega ]_{C_{p( \cdot ),q( \cdot )}^1}\right)\sum_{t = 1,2} \sum_{k,j}\langle h_t\rangle_{\sigma,\Q}^{q_-} \Gamma_j^k\\
    & \lesssim \left(1+[\omega ]_{C_{p( \cdot ),q( \cdot )}^1}\right)[\omega, v]_{C_{p(\cdot), q(\cdot)}^2}\sum_{t=1,2}\|h_t\|_{L^{p_t}(\sigma)}\\
    & \lesssim \left(1+[\omega ]_{C_{p( \cdot ),q( \cdot )}^1}\right)[\omega, v]_{C_{p(\cdot), q(\cdot)}^2}\left((\int_{\Q}|h_1|^{p(\cdot)}\sigma d\mu)^{\frac{1}{p_-}} + (\int_{\Q}|h_2|^{p(\cdot)}\sigma d\mu)^\frac{1}{p_+}\right)\\
    & \leq 2\left(1+[\omega ]_{C_{p( \cdot ),q( \cdot )}^1}\right)[\omega, v]_{C_{p(\cdot), q(\cdot)}^2}.
\end{align*}
Hence, we have
\begin{align*}
    \|M_{\eta}\|_{L^{p(\cdot)}(X,\omega)\rightarrow L^{q(\cdot)}(X,v)} \lesssim \sum\limits_{\theta  = \frac{1}{{{p_{\rm{ - }}}}},\frac{1}{{{p_{\rm{ + }}}}}} {{{\left( {{{[\omega ,v]}_{C_{p( \cdot ),q( \cdot )}^2(X)}} + {{[\omega ]}_{C_{p( \cdot ),q( \cdot )}^1(X)}}{{[\omega ,v]}_{C_{p( \cdot ),q( \cdot )}^2(X)}}} \right)}^\theta }}.
\end{align*}
All of the implicit constants mentioned above are independent on $(\omega,v)$.

\vspace{1cm}
\noindent{\bf Acknowledgements } 
The authors would like to thank the editors and reviewers for careful reading and valuable comments, which lead to the improvement of this paper. 

\medskip 

\noindent{\bf Data Availability} Our manuscript has no associated data.

\medskip 
\noindent{\bf\Large Declarations}
\medskip 

\noindent{\bf Conflict of interest} The authors state that there is no conflict of interest.


\begin{thebibliography}{99}
    \bibitem{Ad2015}
    T. Adamowicz, P. Harjulehto, and P. Hästö, \emph{Maximal operator in variable exponent lebesgue spaces on unbounded quasimetric measure spaces}, Math. Scand., \textbf{116} (1) (2015), 5--22.
    \bibitem{Al2015}
    R. Alvarado and M. Mitrea, \emph{Hardy spaces on Ahlfors-regular quasi-metric spaces}, volume 2142 of Lecture Notes in Mathematics, Springer, Cham, 2015.
    \bibitem{And2015}
    T. Anderson, D. Cruz-Uribe, and K. Moen, \emph{Logarithmic bump conditions for Calderón-Zygmund operators on spaces of homogeneous type}, Publ. Mat., \textbf{59} (1) (2015), 17--43.
 
    \bibitem{Ber2014}
	A. L. Bernardis, E. D. Dalmasso, and G. G. Pradolini,  
	\emph{Generalized maximal functions and related operators on weighted Musielak-Orlicz spaces}, Ann. Acad. Sci. Fenn-M., \textbf{39} (2014): 23--50.
 
    \bibitem{Bra1996}
    M. Bramanti and M. C. Cerutti., \emph{Commutators of singular integrals on homogeneous spaces}, Boll. Un. Mat. Ital. B (7), \textbf{10} (4) (1996), 843--883.

    \bibitem{Cen2024}
    X. Cen, \emph{Fractional maximal operators on weighted variable Lebesgue spaces over the spaces of homogeneous type}, Anal. Math. Phys., \textbf{14} (94) (2024), 1--33.




    \bibitem{CD}
     W.~Chen and W.~Dami\'an,
     \emph{Weighted estimates for the multisublinear maximal function}, Rend. Circ. Mat. Palermo \textbf{62}, (2013), 379–391.

    \bibitem{Co1971}
    R. Coifman and G. Weiss, \emph{Analyse harmonique non-commutative sur certains espaces homogènes}, Lecture Notes in Mathematics, Vol. 242, Springer-Verlag, Berlin-New York, 1971.
    
    \bibitem{Co1977}
    R. Coifman and G. Weiss, \emph{Extensions of Hardy spaces and their use in analysis}, Bull. Amer. Math. Soc., \textbf{83} (4) (1977), 569--645.

    \bibitem{Cruz2022}
    D. Cruz-Uribe and J. Cummings, \emph{Weighted norm inequalities for the maximal operator on $L^{p(\cdot)}$ over spaces of homogeneous type},
    Ann. Fenn. Math., \textbf{47} (1) (2022), 457--488.

    \bibitem{Cruz2011}
	D. Cruz-Uribe, L. Diening, and P. H\"ast\"o,
	\emph{The maximal operator on weighted variable Lebesgue spaces},
	Fract. Calc. Appl. Anal., \textbf{14} (2011), 361--374.

    \bibitem{red}
    D. Cruz-Uribe and A. Fiorenza, \emph{Variable Lebesgue spaces: Foundations and harmonic analysis: Applied and Numerical Harmonic Analysis}, Birkhäuser/Springer, Heidelberg, 2013.
    
\bibitem{Cruz2012}
D. Cruz-Uribe, A. Fiorenza, and C.J. Neugebauer, \emph{Weighted norm inequalities for the maximal operator on variable Lebesgue spaces}, J. Math. Anal. Appl.,
\textbf{64} (394) (2012): 744--760.
    
    \bibitem{Cruz2004}
    D. Cruz-Uribe and J. M. Martell, C. Pérez, \emph{Extrapolation from $A_{\infty}$ weights and applications}, J. Funct. Anal., \textbf{213} (1) (2004), 412--439.


\bibitem{blue}
    D. Cruz-Uribe and J. M. Martell, C. Pérez, \emph{Weights, extrapolation and the theory of Rubio de Francia}, Springer Science \& Business Media, 2011.


    \bibitem{Cruz2018}
    D. Cruz-Uribe and P. Shukla, \emph{The boundedness of fractional maximal operators on variable Lebesgue spaces over spaces of homogeneous type}, Studia Math., \textbf{242} (2) (2018), 109--139.


    \bibitem{Cruz2017}
    D. Cruz-Uribe and L.A.D. Wang, \emph{Extrapolation and weighted norm inequalities in the variable Lebesgue spaces}, Trans. Amer. Math. Soc., \textbf{369} (2017), 1205--1235.

    \bibitem{Cruz2020}
		D. Cruz-Uribe and O. M. Guzmán, \emph{Weighted norm inequalities for the bilinear maximal operator on variable Lebesgue spaces}, Publ. Mat., \textbf{64} (2) (2020), 453--498.
    \bibitem{Deng2009}
    D. Deng and Y. Han, \emph{Harmonic analysis on spaces of homogeneous type}, volume 1966 of Lecture Notes in Mathematics, Springer-Verlag, Berlin, 2009.
    \bibitem{Di2004}
    L. Diening, \emph{Maximal function on generalized Lebesque spaces $L^{p(\cdot)}$}, Math. Inequal. Appl., \textbf{7} (2004), 245--254.
    \bibitem{Di2011}
    L. Diening, P. Harjulehto, P. Hästö, and M. Ruzicka, \emph{Lebesgue and Sobolev spaces with variable exponents}, volume 2017 of Lecture Notes in Mathematics, Springer, Heidelberg, 2011. 
        \bibitem{O.G2010}
    O. Gorosito, G. Pradolini, and O. Salinas, \emph{Boundedness of fractional operators in weighted variable exponent spaces with non doubling measures}, Czechoslovak Math. J., \textbf{60} (4) (2010), 1007--1023.
	\bibitem{249} 
	L. Grafakos, \emph{Classical Fourier Analysis}, third edition, Graduate Texts in Mathematics, 249. Springer, New York, 2014.
    \bibitem{Harj2004}
     P. Harjulehto and P. Hästö, \emph{Lebesgue points in variable exponent spaces}, Ann. Acad. Sci. Fenn. Math., \textbf{29} (2) (2004), 295--306.
    \bibitem{Hy2012}
    T. Hytönen and A. Kairema, \emph{Systems of dyadic cubes in a doubling metric space}, Colloq. Math., \textbf{126} (1) (2012), 1--33.
        \bibitem{Kai2013}
    A. Kairema, \emph{Two-weight norm inequalities for potential type and maximal operators in a metric space}, Publ. Mat., \textbf{57} (1) (2013), 3--56.
    \bibitem{Koki2010}
    V. Kokilashvili, A. Meskhi, and M. Sarwar, \emph{Potential operators in variable exponent Lebesgue spaces: two-weight estimates}, J. Inequal. Appl., \textbf{27} (2010), 1--27.
    \bibitem{O.k1991}
    O. Kovacik and J. Rakosnik, \emph{On spaces $L^{p(x)}$ and $W^{k, p(x)}$}, Czechoslovak Math. J., \textbf{41} (116) (1991), 592--618.
    \bibitem{Perez2009} 
	A. K. Lerner, S. Ombrosi, C. P\'{e}rez, R. H. Torres, and R. Trujillo-Gonz\'{a}lez, \emph{New maximal functions and multiple weights for the multilinear Calder\'{o}n-Zygmund theory}, Adv. Math., \textbf{220} (4) (2009): 1222--1264.
    \bibitem{Lik}
    K. Li and W. Sun, \emph{Characterization of a two weight inequality for multilinear fractional maximal operators}, \url{https://arxiv.org/pdf/1305.4267}.


    \bibitem{RM1979}
    R. Macías and C. Segovia, \emph{Lipschitz functions on spaces of homogeneous type}, Adv. in Math., 33(3):257-270, 1979.
    
\bibitem{Moen2009}
K.~Moen,
\emph{Weighted inequalities for multilinear fractional integral operators}, Collect. Math., \textbf{60} (2009), 213--238.

\bibitem{M1}
K. Moen,
\emph{Sharp one-weight and two-weight bounds for maximal operators},
Studia Mathematica, \textbf{194} (2009), 163--180.


    \bibitem{Muc1972}
    B. Muckenhoupt, \emph{Weighted norm inequalities for the Hardy maximal function}, Trans. Amer. Math. Soc., 
    \textbf{165} (1972), 207--226.
    \bibitem{Muc1974}
    B. Muckenhoupt and R. Wheeden, \emph{Weighted norm inequalities for fractional integrals}, Trans. Amer. Math. Soc., \textbf{192} (1974), 261--274.
    \bibitem{S}
    E.~Sawyer, \emph{A characterization of a two weight norm inequality for maximal operators}, Studia Math., \textbf{75} (1982), 1--11.
\end{thebibliography}
\end{document}